\numberwithin{equation}{section}
\newtheorem{theorem}{Theorem}
\newtheorem{lemma}{Lemma}
\newtheorem{proposition}{Proposition}
\theoremstyle{definition}
\newtheorem{example}[theorem]{Example}
\theoremstyle{remark}
\begin{document}

\title[Asymptotic Classification of Affine SDEs]
{Classification of the Asymptotic Behaviour of Globally Stable Linear Differential Equations with Respect to 
State--independent Stochastic Perturbations}

\author{John A. D. Appleby}
\address{Edgeworth Centre for Financial Mathematics, School of Mathematical
Sciences, Dublin City University, Glasnevin, Dublin 9, Ireland}
\email{john.appleby@dcu.ie} \urladdr{webpages.dcu.ie/\textasciitilde
applebyj}

\author{Jian Cheng}
\address{Edgeworth Centre for Financial Mathematics, School of Mathematical
Sciences, Dublin City University, Glasnevin, Dublin 9, Ireland}
\email{jian.cheng2@mail.dcu.ie}

\author{Alexandra Rodkina}
\address{The University of the West Indies, Mona Campus
Department of Mathematics and Computer Science Mona, Kingston 7,
Jamaica} \email{alexandra.rodkina@uwimona.edu.jm}

\thanks{The first two authors gratefully acknowledges Science Foundation
Ireland for the support of this research under the Mathematics
Initiative 2007 grant 07/MI/008 ``Edgeworth Centre for Financial
Mathematics''.} \subjclass{60H10; 93E15; 93D09; 93D20}
\keywords{stochastic differential equation, asymptotic stability,
global asymptotic stability, simulated annealing, fading stochastic
perturbations}
\date{22 October 2012}

\begin{abstract}
In this paper we consider the global stability of solutions of an
affine stochastic differential equation. The differential equation
is a perturbed version of a globally stable linear autonomous
equation with unique zero equilibrium where the diffusion
coefficient is independent of the state. We find necessary and
sufficient conditions on the rate of decay of the noise intensity
for the solution of the equation to be globally asymptotically
stable, stable but not asymptotically stable, and unstable, each
with probability one. In the case of stable or bounded solutions, or when 
solutions are a.s. unstable asymptotically stable in mean square, it follows that the 
norm of the solution has zero liminf, by virtue of the fact that $\|X\|^2$ has zero pathwise average a.s.s 
Sufficient conditions guaranteeing the different types of 
asymptotic behaviour which are more readily checked are developed. 
It is also shown that noise cannot stabilise solutions, and that the results can be 
extended in all regards to affine stochastic differential equations with periodic coefficients.  
\end{abstract}

\maketitle

\section{Introduction}


In this paper we analyse the asymptotic behaviour of finite--dimensional
affine stochastic differential equations. We suppose that in the absence of a stochastic perturbation
that there is unique and globally stable equilibrium at zero. The perturbation can be viewed as an \emph{external}
force, in the sense that the intensity of the entries in the diffusion matrix are independent of the state.

Therefore we may consider the underlying$d$--dimensional ordinary (deterministic) differential equation
\[
x'(t)=Ax(t), \quad t\geq 0; \quad x(0)=\xi\in\mathbb{R}^d.
\]
Here we have that $A$ is a $d\times d$ real matrix.
 Since we are
presuming that there is a unique equilibrium at zero, and that it is
globally stable, we assume that all the eigenvalues of $A$ have
negative real parts. One of the important tasks in this paper is
to classify the asymptotic behaviour of the stochastic differential
equation
\begin{equation} \label{eq.introgensde}
dX(t)=AX(t)\,dt + \sigma(t)\,dB(t)
\end{equation}
In this setting, $\sigma$ is a continuous and deterministic function and $B$ is
a finite dimensional Brownian motion. Specifically, we let
\begin{equation}\label{eq.sigmacns}
\sigma\in C([0,\infty);\mathbb{R}^{d\times r})
\end{equation}
and $B$ be an $r$--dimensional standard Brownian motion.

Since equations with state--independent noise should be in general
simpler to analyse that state--dependent case, and their
applications are of interest, it is not surprising that such
equations have attracted a lot of attention. Liapunov function
techniques have been applied to study their asymptotic stability in
Khas'minski~\cite{Has}, with a lot of emphasis given to equations
with perturbations $\sigma$ being in $L^2(0,\infty)$. However, in a
pair of papers in 1989, Chan and Williams~\cite{ChanWill:1989} and
Chan\cite{Chan:1989} demonstrated that the stability of global
equilibria in these systems could be preserved with a much slower
rate of decay in $\sigma$: in fact, they showed that provided the
noise perturbation decayed monotonically in its intensity, then
solutions converged to the equilibrium with probability one if and
only if
\[
\lim_{t\to\infty} \|\sigma(t)\|^2\log t =0.
\]
These results also required strong assumptions on the strength of
the nonlinear feedback. Shortly thereafter,
Rajeev~\cite{Rajeev:1993} demonstrated that these results could be
generalised to equations with some non--autonomous features, and
some results on bounded solutions were obtained. In parallel, Mao
demonstrated in~\cite{Mao:1992} that a polynomial rate of decay of
solutions was possible if the perturbation intensity decayed at a
polynomial rate. These results were extended to neutral functional
differential equations by Mao and Liao in~\cite{MaoLiao:1996}, with
exponential decaying upper bounds on the intensity giving rise to an
exponential convergence rate in the solution.

After this, Appleby and his co--authors extended Chan and Williams'
results to stochastic functional differential equations
\cite{AppRod:2005SDDE} and to Volterra equations especially (see
Appleby and Appleby and Riedle \cite{Appleby:2002,
ApplebyRiedle:2006}), with extensions to discrete Volterra equations
appearing in Appleby, Riedle and Rodkina~\cite{JAARMR:2009}.
Necessary and sufficient conditions for exponential stability in
linear Volterra equation in the presence of fading noise was studied
in~\cite{ApplebyFreeman:2003}.

One of the papers which has most influence on this work is Appleby,
Gleeson and Rodkina~\cite{JAJGAR:2009}, which returns directly to
the nonlinear equations studied by Chan and Williams in
\cite{ChanWill:1989}. In it, the monotonicity assumptions on
$\sigma$ were completely relaxed, and the mean reversion strength
was also considerably weakened. Moreover, results on unbounded and
unstable solutions also appeared for the first time. However, the
finite dimensional case was not addressed, nor was a complete
classification of the dynamics presented. The goal this paper is to
address this of the thesis is to address each of these shortcomings

An important idea which appears in
\cite{AppRod:2005SDDE,ApplebyRiedle:2006,JAJGAR:2009} in various
forms is that many facts about more complicated stochastic
differential, functional or Volterra equations with
state--independent noise can be inferred from a much simpler
$d$--dimensional equation whose solution $Y$ which is given by
\begin{equation}\label{def.Y}
dY(t)=-Y(t)dt+\sigma(t)dB(t),\quad t\geq 0;\quad Y(0)=0.
\end{equation}
In fact, we demonstrate that $X$ and $Y$ have equivalent asymptotic
behaviour, in the sense that $X$ converges to zero if and only if
$Y$ does; is bounded but not convergent if and only if $Y$ is; and
is unbounded if and only if $Y$ is.

Therefore, the question of analysing the asymptotic behaviour of the
general linear equation reduces to that of studying the special
linear equation \eqref{def.Y}. If $\sigma$ is identically zero, it
follows that the solution of
\[
y'(t)=-y(t),\quad t\geq 0; \quad y(0)=0.
\]
obeys $y(t)=0$ for all $t\geq 0$ if $y(0)=0$. The question naturally
arises as under what condition on $\sigma$ does the solution $Y(t)$
obey
\begin{equation}\label{eq.Yto0}
\lim_{t\to \infty}Y(t)=0, \quad a.s.
\end{equation}
It is shown in ~\cite{ChanWill:1989} that $Y(t)$ obeys
\eqref{eq.Yto0} in the one--dimensional case if
\[
\lim_{t\to \infty}\sigma^2(t)\log t=0.
\]
Moreover in ~\cite{ChanWill:1989}, it is shown that if
$t\to\sigma^2(t)$ is decreasing to zero, and $Y(t)$ obeys
\eqref{eq.Yto0}, then we must have $\lim_{t\to
\infty}\sigma^2(t)\log t=0$. These results are extended to
finite--dimensions in ~\cite{Chan:1989}. In~\cite{JAJGAR:2009},
monotonicity assumptions on $\sigma$ are relaxed, and results for
unbounded solutions for \eqref{def.Y} are presented. However, none
of these papers classify all the possible types of asymptotic
behaviour of $Y$. This situation was rectified in the scalar case
$(d=1)$ in ~\cite{JAJCAR:2011dresden}, in which the asymptotic
behaviour of solutions of \eqref{def.Y} are classified.

In this paper, we extend the classification of solutions to the
general finite--dimensional case. In fact, we characterise the
convergence, boundedness and unboundedness of solutions of
\eqref{def.Y}, and this leads in turn to a classification of the
convergence, boundedness and unboundedness of solutions of
\eqref{eq.introgensde}. Moreover, it turns out that neither
pointwise convergence rates nor pointwise monotonicity are needed in
order to achieve this classification. Our main results show that $X$
obeys $\lim_{t\to\infty} X(t)=0$ a.s. if and only if
\begin{equation}\label{eq.sum}
S_h(\epsilon)=\sum_{n=0}^{\infty}\left\{1-\Phi\left(\frac{\epsilon}{\sqrt{\int_{nh}^{(n+1)h}\|\sigma(s)\|^2_{F}ds}}
\right) \right\}<+\infty,\quad \text{for every}\quad \epsilon>0,
\end{equation}
where $\Phi$ is the distribution function of a standardised normal
random variable and $h$ is any positive constant. We also show that
in contrast to \eqref{eq.sum}, if $S_h(\epsilon)$ is infinite for
all $\epsilon$, we have that $\limsup_{t\to
\infty}\|X(t)\|=+\infty$; while if the sum is finite for some
$\epsilon$ and infinite for others, then $c_1\leq \limsup_{t\to
\infty}\|X(t)\|\leq c_2$ a.s., where $0<c_1\leq c_2<+\infty$ are
deterministic and $\liminf_{t\to\infty} \|X(t)\|=0$ a.s. In this
last case, when $X$ is bounded, the solution spends most of the time
close to zero, because
\begin{equation} \label{eq.aveX0}
\lim_{t\to\infty} \frac{1}{t}\int_0^t \|X(s)\|^2\,ds=0, \quad
\text{a.s.}
\end{equation}
Since $S_h(\epsilon)$ is monotone in $\epsilon$, it can be seen that
we can describe the asymptotic behaviour for every function
$\sigma$, and that, moreover, the stability, boundedness or
unboundedness of the solution depends on $\sigma$ only through the
overall intensity of the perturbation through the Frobenius norm
$\|\sigma\|_F$, and not through the configuration of the
perturbation and its interaction with the matrix $A$. Moreover, it
can be seen that these conditions which guarantee convergence,
boundedness or unboundedness are independent of the matrix $A$.
Also, by virtue of the form of $S_h(\epsilon)$ and the equivalence
of all norms on $\mathbb{R}^{d\times r}$, it follows that the
asymptotic behaviour relies only on $\|\sigma\|$, where $\|\cdot\|$
is any norm in $\mathbb{R}^{d\times r}$.

Since the underlying deterministic differential equations is assumed
to be stable, it is of interest to determine its response to fading
noise perturbations. In this case, we can find a quite general
characterisation of ``fading noise'' which yields a more
comprehensive picture about the asymptotic behaviour of $X$. If the
fading noise condition is $\int_{nh}^{(n+1)h}
\|\sigma(s)\|^2_F\,ds\to 0$ as $n\to\infty$---which is automatically
true in the case that $X$ is bounded or stable--- is assumed in the
case when $S_h(\epsilon)=+\infty$, then the process $\|X\|$ is
recurrent on $(0,\infty)$, because  $\liminf_{t\to\infty}
\|X(t)\|=0$ and $\limsup_{t\to \infty}\|X(t)\|=+\infty$ a.s.
Furthermore $X$ spends most of the time close to zero in the sense
that \eqref{eq.aveX0} holds. Hence, under the fading noise
condition, we can see that we always have $\liminf_{t\to\infty}
\|X(t)\|=0$ and \eqref{eq.aveX0} holding, regardless of the
finiteness of $S_h$ but that $\limsup_{t\to\infty} \|X(t)\|$ is
zero, positive and finite, or infinite a.s., according as to whether
$S_h$ is always finite, sometimes finite, or always infinite. It is
worth remarking that the fading noise condition we choose is
precisely that which is necessary and sufficient for the mean square
stability of solutions of \eqref{eq.introgensde}.

Given that we are dealing with a continuous time equation, it seems
appropriate that the conditions which enable us to characterise the
asymptotic behaviour should be ``continuous'' rather than
``discrete''. The finiteness condition on $S_h(\epsilon)$, which
relies on a particular partition of time, and the convergence of a
sum, can certainly be seen as a ``discrete'' condition, in this
sense. Therefore, we develop an integral condition on $\sigma$ which
is equivalent to the summation condition in \eqref{eq.sum}. More
precisely, we define
\begin{equation} \label{eq.Ieps}
I_c(\epsilon)=\int_0^\infty \sqrt{\int_{t}^{t+c}
\|\sigma(s)\|^2_F\,ds} \exp\left(-\frac{\epsilon^2/2}{\int_{t}^{t+c}
\|\sigma(s)\|^2_F}\right)\chi_{(0,\infty)}\left(\int_{t}^{t+c}
\|\sigma(s)\|^2_F\right)\,ds
\end{equation}
for arbitrary $c>0$. We then show that $I_c(\epsilon)$ being finite
for all $\epsilon$ implies that $X$ tends to $0$; if $I_c(\epsilon)$
is infinite for all $\epsilon$ then $X$ is unbounded; and if
$I_c(\epsilon)$ is finite for some $\epsilon$ and infinite for
others, then $X$ is bounded but not convergent to zero. The value of
$c$ turns out to be unimportant, and can be chosen to be unity for
convenience. As might be guessed, the finiteness of $I_c(\epsilon)$
for all $\epsilon$ is equivalent to the finiteness of
$S_h(\epsilon)$ for all $\epsilon$; $I_c(\epsilon)$ being infinite
for all $\epsilon$ is equivalent to $S_h(\epsilon)$ being infinite
for all $\epsilon$; and $I_c(\epsilon)$ is finite for some
$\epsilon$ and infinite for others if and only if $S_h(\epsilon)$
is.

Although \eqref{eq.sum} or $I_c(\epsilon)$ being finite are
necessary and sufficient for $X$ to obey $\lim_{t\to\infty} X(t)=0$
a.s., these conditions may be hard to apply in practice. For this
reason we also deduce sharp sufficient conditions on $\sigma$ which
enable us to determine for which value of $\epsilon$ the functions
$S_h(\epsilon)$ or $I_c(\epsilon)$ are finite. One such condition is
the following: if it is known for some $c>0$ that
\[
\lim_{t\to\infty} \int_t^{t+c} \|\sigma(s)\|^2_F\,ds \log t = L\in
[0,\infty],
\]
then $L=0$ implies that $X$ tends to zero a.s.; $L$ being positive
and finite implies $X$ is bounded, but does not converge to zero;
and $L$ being infinite implies $X$ is unbounded. In the case when
$t\mapsto \|\sigma(t)\|^2=:\Sigma_1(t)^2$ or $t\mapsto\int_t^{t+1}
\|\sigma(s)\|^2\,ds=:\Sigma_2(t)^2$ are nonincreasing functions, it
can also be seen that $X(t)\to 0$ as $t\to\infty$ a.s. is equivalent
to $\Sigma_i(t)^2\log t=0$.

One other result of note is established. We ask: is it possible for
solutions of the unperturbed ODE $x'(t)=Ax(t)$ to be unstable, but
solutions of the SDE to be stable for some nontrivial $\sigma$? In
other words, can the noise \emph{stabilise} solutions? We prove that
it cannot, in the sense that if there are a representative and
finite collection of initial conditions $\xi$ for which $X(t,\xi)$
tends to zero with positive probability, then it must be the case
that all the eigenvalues of $A$ have negative real parts, and that
$S(\epsilon)$ is finite for all $\epsilon>0$. These conditions are
therefore equivalent to $\lim_{t\to\infty}X(t,\xi)=0$ a.s. for each
initial condition $\xi$.

The results on the equation \eqref{def.Y} are of more general
utility than in the linear autonomous case. We give an example here
of how they can be used to classify the asymptotic behaviour of a
periodic linear ODE. We plan to show in other works that the
asymptotic behaviour of $Y$ can be used in both the scalar and
finite--dimensional case to understand the asymptotic behaviour of
the general nonlinear SDE
\[
dX(t)=-f(X(t))\,dt + \sigma(t)\,dB(t)
\]
which, in the absence of a stochastic perturbation, has a unique
globally asymptotically stable equilibrium at zero.

The next section states and discusses the main results, with proofs
and supporting lemmata in the following section. Then we discuss
the sufficient conditions on $\sigma$ for stability with proofs and
supporting lemmata.

\section{Discussion and Statement of Main Results}
\subsection{Notation}
In advance of stating and discussing our main results, we introduce
some standard notation. Let $d$ and $r$ be integers. We denote by
$\mathbb{R}^d$ $d$--dimensional real--space, and by
$\mathbb{R}^{d\times r}$ the space of $d\times r$ matrices with real
entries. Here $\mathbb{R}$ denotes the set of real numbers. We
denote the maximum of the real numbers $x$ and $y$ by $x\vee y$ and
the minimum of $x$ and $y$ by $x\wedge y$. If $x$ and $y$ are in
$\mathbb{R}^d$, the standard innerproduct of $x$ and $y$ is denoted
by $\langle x,y\rangle$. The standard Euclidean norm on
$\mathbb{R}^d$ induced by this innerproduct is denoted by
$\|\cdot\|$. If $A\in \mathbb{R}^{d\times r}$, we denote the entry
in the $i$--th row and $j$--th column by $A_{ij}$. For $A\in
\mathbb{R}^{d\times r}$ we denote the Frobenius norm of $A$ by
\[
\|A\|_F=\left( \sum_{j=1}^r \sum_{i=1}^d \|A_{ij}\|^2\right)^{1/2}.
\]
Let $C(I;J)$ denote the space of continuous functions $f:I\to J$
where $I$ is an interval contained in $\mathbb{R}$ and $J$ is a
finite dimensional Banach space. We denote by
$L^2([0,\infty);\mathbb{R}^{d\times r})$ the space of Lebesgue
square integrable functions $f:[0,\infty)\to\mathbb{R}^{d\times r}$
such that $\int_0^\infty \|f(s)\|_F^2\,ds < + \infty$.

\subsection{Main results}
Our first result demonstrates that it is necessary to classify completely the asymptotic behaviour of
only a \emph{single} affine stochastic differential equation in order to classify the asymptotic behaviour for \emph{all}
affine stochastic differential equations with the same diffusion coefficient, for which the underlying deterministic linear differential
equation is asymptotically stable.

To make this precise, let $d$ be an integer and $A$ be a $d\times d$ matrix with real
entries, and consider the deterministic linear differential equation
\begin{equation} \label{eq.linode}
x'(t)=Ax(t), \quad t\geq 0; \quad x(0)=\xi\in \mathbb{R}^d,
\end{equation}
and also consider the stochastically perturbed version of \eqref{eq.linode}, namely
\begin{equation} \label{eq.stochlinode}
dX(t)=AX(t)\,dt + \sigma(t)\,dB(t), \quad t\geq 0; \quad X(0)=\xi\in
\mathbb{R}^d.
\end{equation}
Our first main result states that if $Y$ has certain types of almost sure asymptotic behaviour, then $X$ inherits that almost sure asymptotic behaviour.
\begin{theorem} \label{theorem.XYasy}
Let $A$ be a $d\times d$ real matrix for which all eigenvalues have negative real
parts. Let $\sigma$ obeys \eqref{eq.sigmacns}, $Y$ be the
unique continuous adapted process which obeys \eqref{def.Y}, and $X$ be the
unique continuous adapted process which obeys \eqref{eq.stochlinode}. Then
\begin{itemize}
\item[(A)] If $\lim_{t\to\infty} Y(t)=0$ a.s., then $\lim_{t\to\infty} X(t)=0$, a.s.
\item[(B)] If there exist $0\leq c_1\leq c_2<+\infty$ such that
\[
c_1\leq \liminf_{t\to\infty} \|Y(t)\|\leq \limsup_{t\to\infty} \|Y(t)\|\leq c_2, \quad\text{a.s.}
\]
then there exist $0\leq c_3\leq c_4<+\infty$ such that
\[
c_3\leq \liminf_{t\to\infty} \|X(t)\|\leq \limsup_{t\to\infty} \|X(t)\|\leq c_4, \quad\text{a.s.}
\]
\item[(C)] If $\limsup_{t\to\infty} \|Y(t)\|=+\infty$ a.s., then $\limsup_{t\to\infty} \|X(t)\|=+\infty$ a.s.
\end{itemize}
\end{theorem}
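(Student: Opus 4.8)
The plan is to relate the two processes via the variation-of-constants formula and then exploit the fact that the difference between $X$ and a suitably transformed version of $Y$ solves a deterministic ODE driven by a process that we already control.

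\textbf{Setup.}
First I would write out the solutions explicitly. By variation of parameters,
\[
X(t) = e^{At}\xi + \int_0^t e^{A(t-s)}\sigma(s)\,dB(s),
\qquad
Y(t) = \int_0^t e^{-(t-s)}\sigma(s)\,dB(s).
\]
Since all eigenvalues of $A$ have negative real part, there exist $M\geq 1$ and $\mu>0$ with $\|e^{At}\|\leq M e^{-\mu t}$ for $t\geq 0$; in particular $e^{At}\xi\to 0$, so that term is harmless for every part (A)--(C). The core issue is therefore the stochastic convolution $Z(t):=\int_0^t e^{A(t-s)}\sigma(s)\,dB(s)$, which I must compare to $Y$.

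\textbf{Key comparison step.}
The idea is to express $Z$ pathwise in terms of $Y$ through an integration-by-parts / stochastic Fubini argument, so that no fresh stochastic analysis on $\sigma$ is needed. Writing $dY(s) = -Y(s)\,ds + \sigma(s)\,dB(s)$, we have $\sigma(s)\,dB(s) = dY(s) + Y(s)\,ds$, and substituting into $Z$ and integrating by parts (the stochastic integration-by-parts formula, valid since $s\mapsto e^{A(t-s)}$ is $C^1$ of bounded variation on $[0,t]$) gives
\[
Z(t) = \int_0^t e^{A(t-s)}\,dY(s) + \int_0^t e^{A(t-s)} Y(s)\,ds
     = Y(t) + \int_0^t e^{A(t-s)}(A+I)Y(s)\,ds,
\]
after cancelling the boundary terms (using $Y(0)=0$). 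Thus $X(t) = e^{At}\xi + Y(t) + \int_0^t e^{A(t-s)}(A+I)Y(s)\,ds$. Everything now reduces to controlling the deterministic (pathwise) convolution operator $Y\mapsto \int_0^t e^{A(t-s)}(A+I)Y(s)\,ds$ applied to a path with known asymptotics.

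\textbf{Passing the asymptotics through the convolution.}
For part (A): if $Y(t)\to 0$ a.s., fix a path; given $\eta>0$ pick $T$ with $\|Y(s)\|\leq\eta$ for $s\geq T$; split $\int_0^t = \int_0^T + \int_T^t$; the first integral is $O(e^{-\mu(t-T)})\to 0$, the second is bounded by $\|A+I\|\,\eta\,M\int_T^t e^{-\mu(t-s)}\,ds \leq \|A+I\|\, M\eta/\mu$, which is arbitrarily small. Hence $X(t)\to 0$ a.s. For part (B): if $\limsup\|Y(t)\|\leq c_2$, the same splitting gives $\limsup\|X(t)\|\leq c_2(1+\|A+I\|M/\mu)=:c_4$, giving the upper bound; the lower bound $c_3\geq 0$ is immediate since a liminf of a norm is nonnegative, so only $c_3=0$ needs to be asserted (the theorem permits $c_3=0$), and boundedness is what is actually being transferred. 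For part (C): this is the contrapositive of the boundedness transfer --- if $\limsup\|X(t)\|<\infty$ on a set of positive probability, then by the same estimate applied with the roles reversed (solve for $Y$ in terms of $X$ via the analogous variation-of-constants identity $Y(t) = e^{-t}\cdot 0 + \int_0^t e^{-(t-s)}(\,dX(s) - AX(s)\,ds + \ldots)$, or more cleanly by the symmetric integration-by-parts) one would get $\limsup\|Y(t)\|<\infty$ there, contradicting $\limsup\|Y(t)\|=+\infty$ a.s.

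\textbf{Main obstacle.}
I expect the only genuinely delicate point to be the rigorous justification of the integration-by-parts/stochastic Fubini step that yields the pathwise identity $X(t) = e^{At}\xi + Y(t) + \int_0^t e^{A(t-s)}(A+I)Y(s)\,ds$; once this identity is in hand, parts (A)--(C) follow from elementary deterministic estimates on exponentially decaying convolution kernels as sketched above. An alternative that avoids stochastic Fubini entirely is to define $W(t):=X(t)-Y(t)$ and note that $W$ solves the \emph{random} ODE $W'(t) = AX(t) + Y(t) = AW(t) + (A+I)Y(t)$ with $W(0)=\xi$, which has classical (pathwise) solution $W(t)=e^{At}\xi + \int_0^t e^{A(t-s)}(A+I)Y(s)\,ds$ by ordinary variation of parameters --- here $Y$ has continuous sample paths, so this is just deterministic ODE theory applied pathwise, and no It\^o calculus on the convolution is required. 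I would use this route in the write-up, then run the three estimates above, and for (C) obtain the reverse implication by the symmetric argument, writing $Y$ in terms of $X$ via $\tilde W:=Y-X$ solving $\tilde W' = -\tilde W - (A+I)X$ (using $\|e^{-t\cdot}\|$ decay, which needs no hypothesis on $A$), so that bounded $X$ forces bounded $Y$.
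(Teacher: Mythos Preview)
Your proposal is correct and, in the route you ultimately commit to (defining $W=X-Y$, observing it solves the pathwise ODE $W'=AW+(A+I)Y$, and running variation of constants; then for (C) the symmetric device $\tilde W=Y-X$ solving $\tilde W'=-\tilde W-(I+A)X$), it is essentially identical to the paper's proof. The paper likewise sets $z=X-Y$, derives $z'=Az+(I+A)Y$, uses $\|e^{At}\|\le Ke^{-\lambda t}$ for (A) and the upper bound in (B), and for (C) (and the nontrivial lower bound on $\limsup\|X\|$ in (B), which you correctly note is not needed since $c_3=0$ is admissible) rewrites $dX=(-X+(I+A)X)\,dt+\sigma\,dB$ to obtain $Y(t)=X(t)-\xi e^{-t}-\int_0^t e^{-(t-s)}(I+A)X(s)\,ds$, exactly your $\tilde W$ identity.
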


Therefore, the asymptotic behaviour of $X$ can be classified, provided the hypothesised asymptotic behaviour of $Y$ in Theorem~\ref{theorem.XYasy}
can be established. Our next result claims that such a classification can be achieved. Before it can be stated, we make some observations and fix notation. First, we see that $Y$ has the representation
\begin{equation}   \label{eq.Yformmult}
Y(t)=e^{-t}\int_0^t e^s \sigma(s)\,dB(s), \quad t\geq 0.
\end{equation}
Denote by $\Phi:\mathbb{R}\to \mathbb{R}$ the distribution function of a
standard normal random variable
\begin{equation} \label{def.Phi}
\Phi(x)=\frac{1}{\sqrt{2\pi}}\int_{-\infty}^x e^{-u^2/2}\,du, \quad
x\in\mathbb{R}.
\end{equation}
We interpret $\Phi(-\infty)=0$ and $\Phi(\infty)=1$. 
Define $S_h$ by
\begin{equation} \label{def.Sh}
S_h(\epsilon)=\sum_{n=0}^\infty
\left\{1-\Phi\left(\frac{\epsilon}{\sqrt{\int_{nh}^{(n+1)h}
\|\sigma(s)\|^2_F\,ds}} \right) \right\}.
\end{equation}
Since $S_h$ is a monotone function of $\epsilon$, it is the case
that either (i) $S_h(\epsilon)$ is finite for all $\epsilon>0$; (ii)
there is $\epsilon'>0$ such that for all $\epsilon>\epsilon'$ we
have $S_h(\epsilon)<+\infty$ and $S_h(\epsilon)=+\infty$ for all
$\epsilon<\epsilon'$; and (iii) $S_h(\epsilon)=+\infty$ for all
$\epsilon>0$. The finiteness of the sum $S_h(\epsilon)$ may be hard to estimate because $\Phi$ is not known in closed form. However, the asymptotic behaviour of $1-\Phi$ is well--known via Mill's estimate cf., e.g., \cite[Problem 2.9.22]{K&S}
\begin{equation} \label{eq.millsasy}
\lim_{x\to\infty}\frac{1-\Phi(x)}{x^{-1}e^{-x^2/2}}=\frac{1}{\sqrt{2\pi}},
\end{equation}
so it is possible to determine whether $S_h(\epsilon)$ is finite according as to whether
\begin{equation} \label{def.Shpr}
S_h'(\epsilon)=\sum_{n=1}^\infty \sqrt{\int_{nh}^{(n+1)h} \|\sigma(s)\|^2_F\,ds} \cdot
\exp\left(-\frac{\epsilon^2}{2\int_{nh}^{(n+1)h} \|\sigma(s)\|^2_F\,ds}\right),
\end{equation}
is finite.
\begin{proposition} \label{prop.SHSHpr}
Suppose that $S_h$ is defined by \eqref{def.Sh} and $S_h'$ is defined by \eqref{def.Shpr}. Then for any $\epsilon>0$ we have that
$S_h(\epsilon)$ is finite if and only if $S_h'(\epsilon)$ is finite.
\end{proposition}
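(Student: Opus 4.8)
The plan is to compare the general terms of the series \eqref{def.Sh} and \eqref{def.Shpr} by means of Mill's estimate \eqref{eq.millsasy}, once the degenerate indices have been set aside. Write $\gamma_n:=\int_{nh}^{(n+1)h}\|\sigma(s)\|_F^2\,ds$, which is finite for every $n$ since $\sigma$ is continuous. If $\gamma_n=0$, then $1-\Phi(\epsilon/\sqrt{\gamma_n})=1-\Phi(+\infty)=0$, while (interpreting $\sqrt{x}\exp(-\epsilon^2/(2x))$ by its limiting value $0$ as $x\downarrow 0$) the $n$--th term of $S_h'(\epsilon)$ is also $0$; such indices are therefore immaterial to the finiteness of either sum, and I may restrict attention to those $n$ with $\gamma_n>0$. (The discrepancy between the ranges $n\geq 0$ in \eqref{def.Sh} and $n\geq 1$ in \eqref{def.Shpr} is a single term and likewise irrelevant.)

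Next I would dichotomise according to whether $\gamma_n\to 0$. Suppose first that $\gamma_n\not\to 0$; then there is $\delta>0$ with $\gamma_n\geq\delta$ for infinitely many $n$. For each such $n$ we have $1-\Phi(\epsilon/\sqrt{\gamma_n})\geq 1-\Phi(\epsilon/\sqrt{\delta})>0$, so $S_h(\epsilon)=+\infty$; and since both $x\mapsto\sqrt{x}$ and $x\mapsto\exp(-\epsilon^2/(2x))$ are nondecreasing on $(0,\infty)$, the same indices give $\sqrt{\gamma_n}\exp(-\epsilon^2/(2\gamma_n))\geq\sqrt{\delta}\exp(-\epsilon^2/(2\delta))>0$, so $S_h'(\epsilon)=+\infty$ as well. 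Hence in this case both series diverge and there is nothing further to prove. This is the step I expect to be the main obstacle, in the mild sense that Mill's estimate is unavailable here and one must argue finiteness by hand; fortunately the crude bounds above suffice.

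It remains to treat the case $\gamma_n\to 0$. Then $x_n:=\epsilon/\sqrt{\gamma_n}\to\infty$, and because $x_n^{-1}e^{-x_n^2/2}=\epsilon^{-1}\sqrt{\gamma_n}\exp(-\epsilon^2/(2\gamma_n))$, Mill's estimate \eqref{eq.millsasy} yields
\[
\lim_{n\to\infty,\ \gamma_n>0}\frac{1-\Phi\bigl(\epsilon/\sqrt{\gamma_n}\bigr)}{\sqrt{\gamma_n}\exp\bigl(-\epsilon^2/(2\gamma_n)\bigr)}=\frac{1}{\epsilon\sqrt{2\pi}}\in(0,\infty).
\]
Therefore there exist $N\in\mathbb{N}$ and constants $0<c_1<c_2<\infty$ such that $c_1\sqrt{\gamma_n}\exp(-\epsilon^2/(2\gamma_n))\leq 1-\Phi(\epsilon/\sqrt{\gamma_n})\leq c_2\sqrt{\gamma_n}\exp(-\epsilon^2/(2\gamma_n))$ for every $n\geq N$ with $\gamma_n>0$, while all three quantities vanish when $\gamma_n=0$. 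Summing these inequalities over $n\geq N$ and then restoring the finitely many terms with $n<N$ shows that the tail of $S_h(\epsilon)$ converges if and only if the tail of $S_h'(\epsilon)$ does, i.e. $S_h(\epsilon)$ is finite if and only if $S_h'(\epsilon)$ is finite, as claimed.
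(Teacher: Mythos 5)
Your proof is correct, and while it rests on the same central tool as the paper's proof---Mill's estimate \eqref{eq.millsasy}---its organisation is genuinely different and arguably cleaner. The paper proves the two implications separately and, in each, must first deduce $\theta(n)\to 0$ from the finiteness hypothesis: in one direction this is immediate (summability of $1-\Phi(\epsilon/\theta(n))$ forces the terms to vanish, hence $\theta(n)\to 0$), but in the other direction the paper introduces the auxiliary function $\phi(x)=x\exp(-1/(2x^2))$ and uses its continuity and monotonicity to conclude that summability of $S_h'(\epsilon)$ forces $\theta(n)\to 0$. You sidestep this asymmetry entirely by dichotomising up front on whether $\gamma_n\to 0$: if not, both series visibly diverge by a crude lower bound (your observation that $x\mapsto\sqrt{x}\exp(-\epsilon^2/(2x))$ is nondecreasing does the work here), and if so, Mill's estimate gives two--sided comparison bounds that settle both implications simultaneously. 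The net effect is that you never need the auxiliary function $\phi$, and the argument treats the two series symmetrically throughout. Your handling of the degenerate indices with $\gamma_n=0$ and of the mismatch between the starting indices $n\geq 0$ and $n\geq 1$ is also correct and is a point the paper glosses over.
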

\begin{proof}
Define
\[
\theta(n)^2=\int_{nh}^{(n+1)h} \|\sigma(s)\|^2_F\,ds.
\]
If $S_h(\epsilon)$ is finite, then $1-\Phi(\epsilon/\theta(n))\to 0$
as $n\to\infty$. This implies $\epsilon/\theta(n)\to\infty$ as
$n\to\infty$. Therefore by \eqref{eq.millsasy}, we have
\begin{equation} \label{eq.millsthetan}
\lim_{n\to\infty}\frac{1-\Phi(\epsilon/\theta(n))}{\theta(n)/\epsilon\cdot
\exp(-\epsilon^2/\{2\theta^2(n)\})}=\frac{1}{\sqrt{2\pi}}.
\end{equation}
Since $(1-\Phi(\epsilon/\theta(n)))_{n\geq 1}$ is summable, it
therefore follows that the sequence
\[(\theta(n)/\epsilon\cdot
\exp(-\epsilon^2/\{2\theta^2(n)\}))_{n\geq 1}
\] is summable, so $S_h'(\epsilon)$ is finite, by definition.

On the other hand, if $S_h'(\epsilon)$ is finite, and we define
$\phi:[0,\infty)\to \mathbb{R}^d$ by
\[
\phi(x)=\left\{ \begin{array}{cc} x \exp(-1/(2x^2)), & x>0, \\
0, & x=0,
\end{array}
\right.
\]
then as we have $\theta(n)\exp(-\epsilon^2/2\theta^2(n))$ summable, we have that $(\phi(\theta(n)/\epsilon))_{n\geq 1}$ is summable.
Therefore $\phi(\theta(n)/\epsilon)\to 0$ as $n\to\infty$. Then, as
$\phi$ is continuous and increasing on $[0,\infty)$, we have that
$\theta(n)/\epsilon\to 0$ as $n\to\infty$, or $\epsilon/\theta(n)\to
\infty$ as $n\to\infty$. Therefore \eqref{eq.millsthetan} holds, and
thus $(1-\Phi(\epsilon/\theta(n)))_{n\geq 1}$ is summable, which
implies that $S_h(\epsilon)$ is finite, as required.
\end{proof}

Armed with these observations, we see that the following theorem characterises the pathwise asymptotic
behaviour of solutions of \eqref{def.Y}.  In the scalar
case it yields a result of Appleby, Cheng and Rodkina in~\cite{JAJCAR:2011dresden} when $h=1$. It is also of utility
when considering the relationship between the asymptotic behaviour
of solutions of stochastic differential equations and the asymptotic
behaviour of uniform step--size discretisations.
\begin{theorem}  \label{theorem.Yclassify3}
Suppose that $\sigma$ obeys \eqref{eq.sigmacns} and $Y$ is the
unique continuous adapted process which obeys \eqref{def.Y}.
Suppose that $S_h'$ is defined by \eqref{def.Shpr}.
\begin{itemize}
\item[(A)] If
\begin{equation} \label{eq.thetastableh}
\text{$S_h'(\epsilon)$ is finite for all $\epsilon>0$},
\end{equation}
then
\begin{equation} \label{eq.Ytto0}
\lim_{t\to\infty} Y(t)=0, \quad\text{a.s.}
\end{equation}
\item[(B)] If there exists $\epsilon'>0$ such that
\begin{equation}  \label{eq.thetaboundedh}
\text{$S_h'(\epsilon)$ is finite for all $\epsilon>\epsilon'$}, \quad
\text{$S_h'(\epsilon)=+\infty$ for all $\epsilon<\epsilon'$},
\end{equation}
then there exists deterministic $0<c_1\leq c_2<+\infty$ such that
\begin{equation} \label{eq.Ytboundedh}
c_1\leq \limsup_{t\to\infty} \|Y(t)\|\leq c_2, \quad \text{a.s.}
\end{equation}
Moreover
\begin{equation} \label{eq.liminfYaveY0}
\liminf_{t\to\infty} \|Y(t)\|=0, \quad\lim_{t\to\infty}
\frac{1}{t}\int_0^t \|Y(s)\|^2\,ds=0, \quad\text{a.s.}
\end{equation}
\item[(C)] If
\begin{equation} \label{eq.thetaunstableh}
\text{$S_h'(\epsilon)=+\infty$ for all $\epsilon>0$},
\end{equation}
then
\begin{equation} \label{eq.Ytunstable}
\limsup_{t\to\infty} \|Y(t)\|=+\infty, \quad \text{a.s.}
\end{equation}
\end{itemize}
\end{theorem}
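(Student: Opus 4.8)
The plan is to sample $Y$ along the arithmetic progression $(nh)_{n\ge 0}$ and reduce the problem to an affine recursion driven by independent Gaussian vectors. From \eqref{eq.Yformmult} (equivalently, from \eqref{def.Y} and the variation-of-constants formula on $[nh,(n+1)h]$) one has $Y((n+1)h)=e^{-h}Y(nh)+\xi_n$ with $\xi_n:=\int_{nh}^{(n+1)h}e^{-((n+1)h-s)}\sigma(s)\,dB(s)$; since $\sigma$ is deterministic and the $\xi_n$ are built from increments of $B$ over disjoint intervals, the $\xi_n$ are independent, jointly Gaussian, mean-zero vectors in $\mathbb{R}^d$. Writing $\theta(n)^2:=\int_{nh}^{(n+1)h}\|\sigma(s)\|_F^2\,ds$, the bounds $e^{-2h}\le e^{-2((n+1)h-s)}\le 1$ on $[nh,(n+1)h]$ give $e^{-2h}\theta(n)^2\le \mathbb{E}\|\xi_n\|^2\le \theta(n)^2$. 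The first step is to convert this into two-sided tail estimates: bounding $\xi_n$ coordinatewise yields $\mathbb{P}(\|\xi_n\|>\lambda)\le 2d\,(1-\Phi(\lambda/(\sqrt d\,\theta(n))))$, while projecting $\xi_n$ onto a top eigenvector of its covariance matrix (whose largest eigenvalue is at least $e^{-2h}\theta(n)^2/d$) yields $\mathbb{P}(\|\xi_n\|>\lambda)\ge 2\,(1-\Phi(\lambda\sqrt d\,e^{h}/\theta(n)))$. Summing over $n$, both sides are, up to rescaling $\epsilon$ by a fixed constant, comparable to $S_h(\epsilon)$ as in \eqref{def.Sh}; and by Proposition~\ref{prop.SHSHpr} the hypotheses \eqref{eq.thetastableh}, \eqref{eq.thetaboundedh}, \eqref{eq.thetaunstableh} on $S_h'$ are equivalent to the same hypotheses on $S_h$, which is more convenient here.

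For (A), when $S_h$ is finite for every $\epsilon>0$, the Borel--Cantelli lemma applied to the coordinatewise upper bound gives $\xi_n\to 0$ a.s.; feeding this into the contractive recursion $Y((n+1)h)=e^{-h}Y(nh)+\xi_n$ gives $Y(nh)\to 0$ a.s. To pass to convergence of $Y(t)$ over the whole half-line I would control the fluctuation on each block: for $t\in[nh,(n+1)h]$ write $Y(t)=e^{-(t-nh)}Y(nh)+e^{-t}M_n(t)$ with $M_n(t)=\int_{nh}^{t}e^{s}\sigma(s)\,dB(s)$, and bound $\eta_n:=\sup_{t\in[nh,(n+1)h]}\|e^{-t}M_n(t)\|$ via the reflection principle applied coordinatewise (each coordinate of $M_n$ being a time-changed Brownian motion), obtaining $\mathbb{P}(\eta_n>c)\le 4d\,(1-\Phi(c\,e^{-h}/(d\,\theta(n))))$; Borel--Cantelli then gives $\eta_n\to 0$ a.s., hence \eqref{eq.Ytto0}. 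For (C), when $S_h$ is infinite for all $\epsilon>0$, the second Borel--Cantelli lemma --- legitimate because the $\xi_n$ are independent --- applied to the lower tail bound shows $\|\xi_n\|>K$ infinitely often a.s.\ for every $K$, so $\limsup_n\|\xi_n\|=+\infty$ a.s.; since $\|\xi_n\|\le\|Y((n+1)h)\|+e^{-h}\|Y(nh)\|$, boundedness of $(\|Y(nh)\|)_n$ would force boundedness of $(\|\xi_n\|)_n$, whence $\limsup_n\|Y(nh)\|=+\infty$ a.s., giving \eqref{eq.Ytunstable}.

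For (B), the hypothesis forces $1-\Phi(\epsilon/\theta(n))\to 0$ for any fixed $\epsilon>\epsilon'$, hence $\theta(n)\to 0$. Running the (A)-argument with thresholds fixed above $\sqrt d\,\epsilon'$ (for $\xi_n$) and above $d\,e^h\epsilon'$ (for $\eta_n$) makes the relevant tail sums finite, so $\|\xi_n\|$ and $\eta_n$ are eventually bounded by a deterministic constant; iterating the contractive recursion then gives $\limsup_{t\to\infty}\|Y(t)\|\le c_2$ a.s.\ with $c_2=c_2(h,d,\epsilon')$ deterministic. Running the (C)-argument with a fixed $K_0<\epsilon' e^{-h}/\sqrt d$ makes the lower tail sum infinite, so $\|\xi_n\|>K_0$ infinitely often a.s., and then $\max(\|Y((n+1)h)\|,\|Y(nh)\|)>K_0/(1+e^{-h})$ infinitely often, giving $\limsup_{t\to\infty}\|Y(t)\|\ge c_1:=K_0/(1+e^{-h})>0$ a.s. This establishes \eqref{eq.Ytboundedh}, and since both bounds hold on an event of probability one we automatically get $c_1\le c_2$.

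It remains to prove \eqref{eq.liminfYaveY0}. I would apply It\^o's formula to $\|Y\|^2$ using \eqref{def.Y} to obtain the identity $2\int_0^t\|Y(s)\|^2\,ds=\int_0^t\|\sigma(s)\|_F^2\,ds-\|Y(t)\|^2+2N(t)$, where $N(t)=\int_0^t\langle Y(s),\sigma(s)\,dB(s)\rangle$ is a continuous local martingale with $\langle N\rangle(t)\le (\sup_{s\le t}\|Y(s)\|^2)\int_0^t\|\sigma(s)\|_F^2\,ds$. Dividing by $t$: from $\theta(n)\to 0$ one gets $t^{-1}\int_0^t\|\sigma(s)\|_F^2\,ds\to 0$ (a Ces\`aro mean of a null sequence, then interpolating over $t$), the a.s.\ boundedness of $\|Y\|$ from part (B) gives $\|Y(t)\|^2/t\to 0$ a.s.\ and $\langle N\rangle(t)=o(t)$ a.s., and the strong law (or the law of the iterated logarithm) for continuous local martingales then gives $N(t)/t\to 0$ a.s.; hence $\lim_{t\to\infty}\frac1t\int_0^t\|Y(s)\|^2\,ds=0$ a.s. Finally, a nonnegative continuous function whose Ces\`aro average tends to $0$ must have zero liminf, so $\liminf_{t\to\infty}\|Y(t)\|=0$ a.s. I expect part (B) to be the main obstacle: combining the It\^o identity with the martingale strong law requires the boundedness of $\|Y\|$ already in hand, and one must track that $c_1$ and $c_2$ are genuinely deterministic and that the block-fluctuation estimates $\eta_n$ are handled uniformly (it is in the reflection-principle step that $d$-dimensionality costs a harmless dimensional constant). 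Parts (A) and (C) are comparatively routine once the two-sided tails and the independence of the $\xi_n$ are established.
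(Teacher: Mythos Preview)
Your proposal is correct and follows essentially the same architecture as the paper's proof (which goes through Theorem~\ref{theorem.Yclassify2} specialised to $t_n=nh$): discretise along $(nh)$, use Borel--Cantelli on the independent Gaussian increments $\xi_n$ for the limsup statements, control intra-block fluctuations by a martingale time-change/reflection argument coordinatewise, and obtain \eqref{eq.liminfYaveY0} from It\^o's formula plus the martingale strong law. The only differences are cosmetic: for the lower tail you project onto a top eigenvector of the covariance of $\xi_n$ where the paper instead passes through a coordinatewise comparison (Lemma~\ref{lemma.SS1}), and you prove \eqref{eq.liminfYaveY0} directly for $A=-I_d$ where the paper establishes the analogous statement for general stable $A$ via the Lyapunov matrix $M$ solving $A^TM+MA=-I_d$ and then specialises.
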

The conditions and form of Theorem~\ref{theorem.Yclassify3}, as well
as other theorems in this section, are inspired by those of
\cite[Theorem 1]{ChanWill:1989} and by \cite[Theorem 6, Corollary
7]{JAARMR:2009}.

We next show that the parameter $h>0$ in Theorem~\ref{theorem.Yclassify3}, while potentially of interest for numerical simulations, plays no role in classifying the dynamics of \eqref{def.Y}.
Therefore, we may take $h=1$ without loss of generality.
\begin{proposition} \label{prop.ShS1}
Suppose that $S_h'$ is defined by \eqref{def.Shpr}.
\begin{itemize}
\item[(i)] If  $S_1'(\epsilon)<+\infty$ for all $\epsilon>0$, then for each $h>0$ we have $S_h'(\epsilon)<+\infty$ for all $\epsilon>0$.
\item[(ii)] If there exists $\epsilon'>0$ such that $S_1'(\epsilon)<+\infty$ for all $\epsilon>\epsilon'$ and
$S_1'(\epsilon)=+\infty$ for all $\epsilon<\epsilon'$, then for each $h>0$ there exists $\epsilon_h'>0$ such that
 $S_h'(\epsilon)<+\infty$ for all $\epsilon>\epsilon_h'$ and  $S_h'(\epsilon)=+\infty$ for all $\epsilon<\epsilon_h'$.
\item[(iii)] If  $S_1'(\epsilon)=+\infty$ for all $\epsilon>0$, then for each $h>0$ we have $S_h'(\epsilon)=+\infty$ for all $\epsilon>0$.
\end{itemize}
\end{proposition}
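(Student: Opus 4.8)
The plan is to bound $S_h'$ above by a constant multiple of $S_1'$ (and vice versa) with the only genuine change being a rescaling $\epsilon\mapsto\epsilon/\sqrt{K}$ of the argument by a factor $K=K(h)$ depending only on $h$; the three cases then drop out of this two-sided comparison together with the obvious monotonicity of $S_h'$ in $\epsilon$. Write $\theta_h(n)^2=\int_{nh}^{(n+1)h}\|\sigma(s)\|_F^2\,ds$, and for $\epsilon>0$ put $g_\epsilon(0)=0$ and $g_\epsilon(x)=x\exp(-\epsilon^2/(2x^2))$ for $x>0$, so that $S_h'(\epsilon)=\sum_{n\ge1}g_\epsilon(\theta_h(n))$. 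I would first record three elementary facts: (a) $x\mapsto g_\epsilon(x)$ is continuous and increasing on $[0,\infty)$; (b) $\epsilon\mapsto g_\epsilon(x)$ is non-increasing, so $\epsilon\mapsto S_h'(\epsilon)$ is non-increasing --- this already produces the stated trichotomy, with threshold $\epsilon_h':=\inf\{\epsilon>0:S_h'(\epsilon)<\infty\}$; and (c) the scaling identity $g_\epsilon(\lambda x)=\lambda\,g_{\epsilon/\lambda}(x)$ for $\lambda>0$.

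Next I would carry out the interval bookkeeping. Fix $h>0$, and for $n\ge1$ let $M_n=\{m\in\mathbb{N}_0:[m,m+1)\cap[nh,(n+1)h)\ne\emptyset\}$. Since $[nh,(n+1)h)$ has length $h$ it meets at most $K:=\lceil h\rceil+1$ unit intervals, so $|M_n|\le K$; and since a unit interval has length $1$, each fixed $m$ lies in at most $L:=\lceil 1/h\rceil+1$ of the sets $M_n$. Because $[nh,(n+1)h)\subseteq\bigcup_{m\in M_n}[m,m+1)$ we have $\theta_h(n)^2\le\sum_{m\in M_n}\theta_1(m)^2\le K\max_{m\in M_n}\theta_1(m)^2$, so by (a) and (c),
\[
g_\epsilon(\theta_h(n))\le g_\epsilon\Bigl(\sqrt{K}\max_{m\in M_n}\theta_1(m)\Bigr)=\sqrt{K}\max_{m\in M_n}g_{\epsilon/\sqrt{K}}(\theta_1(m))\le\sqrt{K}\sum_{m\in M_n}g_{\epsilon/\sqrt{K}}(\theta_1(m)).
\]
Summing over $n\ge1$ and interchanging the order of summation (each $m$ appears in at most $L$ of the $M_n$) gives
\[
S_h'(\epsilon)\le\sqrt{K}\,L\sum_{m=0}^{\infty}g_{\epsilon/\sqrt{K}}(\theta_1(m))=\sqrt{K}\,L\Bigl(g_{\epsilon/\sqrt{K}}(\theta_1(0))+S_1'(\epsilon/\sqrt{K})\Bigr),
\]
and $g_{\epsilon/\sqrt{K}}(\theta_1(0))<\infty$ since $\sigma$ is continuous on $[0,1]$. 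The same argument with the roles of the two partitions exchanged --- now covering $[m,m+1)$ by the intervals $[nh,(n+1)h)$ that meet it --- yields the companion estimate $S_1'(\epsilon)\le\sqrt{L}\,K\bigl(g_{\epsilon/\sqrt{L}}(\theta_h(0))+S_h'(\epsilon/\sqrt{L})\bigr)$.

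Finally I would read off the conclusions. For (i): if $S_1'(\delta)<\infty$ for every $\delta>0$, the first estimate (with $\delta=\epsilon/\sqrt{K}$) gives $S_h'(\epsilon)<\infty$ for every $\epsilon>0$. For (iii): if $S_h'(\epsilon_0)<\infty$ for some $\epsilon_0$, then by (b) $S_h'(\epsilon/\sqrt{L})<\infty$ for all $\epsilon\ge\epsilon_0\sqrt{L}$, so the companion estimate gives $S_1'(\epsilon)<\infty$ for all such $\epsilon$; the contrapositive says $S_1'\equiv+\infty$ on $(0,\infty)$ forces $S_h'\equiv+\infty$ on $(0,\infty)$. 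For (ii): taking $\epsilon_h':=\inf\{\epsilon>0:S_h'(\epsilon)<\infty\}$, monotonicity (b) already gives $S_h'(\epsilon)<\infty$ for $\epsilon>\epsilon_h'$ and $S_h'(\epsilon)=+\infty$ for $\epsilon<\epsilon_h'$, so only $0<\epsilon_h'<\infty$ remains. The first estimate shows $S_h'(\epsilon)<\infty$ whenever $\epsilon/\sqrt{K}>\epsilon'$, hence $\epsilon_h'\le\epsilon'\sqrt{K}<\infty$; and if $\epsilon_h'$ were $0$, i.e. $S_h'<\infty$ throughout $(0,\infty)$, the companion estimate would give $S_1'<\infty$ throughout $(0,\infty)$, contradicting $S_1'(\epsilon)=+\infty$ for $\epsilon<\epsilon'$.

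The one step needing care is the passage from $\sum_{m\in M_n}\theta_1(m)^2$ to $K\max_{m\in M_n}\theta_1(m)^2$ inside $g_\epsilon$: bounding a sum of at most $K$ squares by $K$ times the largest costs \emph{exactly} the rescaling $\epsilon\mapsto\epsilon/\sqrt{K}$ via (c), and this is what makes the comparison clean rather than merely qualitative. All remaining work is elementary counting of overlaps between the two partitions, and the degenerate terms (where some $\theta_h(n)$ or $\theta_1(m)$ vanishes) contribute $0$ to both sides by the convention $g_\epsilon(0)=0$.
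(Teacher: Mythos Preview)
Your proof is correct and takes a genuinely different route from the paper's. The paper argues indirectly via Theorem~\ref{theorem.Yclassify3}: since that theorem shows (for \emph{each} $h>0$ separately) that the trichotomy for $S_h'$ matches the trichotomy for the a.s.\ asymptotic behaviour of the process $Y$, and since the behaviour of $Y$ is an intrinsic object not depending on $h$, the three regimes for $S_h'$ must line up with those for $S_1'$. By contrast, you give a direct, purely deterministic comparison: cover each $[nh,(n+1)h)$ by at most $K$ unit intervals (and vice versa), use the scaling identity $g_\epsilon(\lambda x)=\lambda g_{\epsilon/\lambda}(x)$ to absorb the factor $\sqrt{K}$ into the argument, and obtain two-sided inequalities $S_h'(\epsilon)\lesssim S_1'(\epsilon/\sqrt{K})$ and $S_1'(\epsilon)\lesssim S_h'(\epsilon/\sqrt{L})$.

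What each approach buys: the paper's argument is short once Theorem~\ref{theorem.Yclassify3} is in hand, but it relies on the full probabilistic classification and is therefore logically downstream of a substantial result. Your argument is self-contained and elementary --- it would work even before any stochastic analysis is done --- and as a bonus it gives quantitative control on the threshold, namely $\epsilon'/\sqrt{L}\le \epsilon_h'\le \epsilon'\sqrt{K}$, which the paper's proof does not yield.
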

\begin{proof}
To prove part (i), note by hypothesis that part (A) of Theorem~\ref{theorem.Yclassify3} implies $Y(t)\to 0$ as $t\to\infty$ a.s. Now suppose that there is a $h>0$ such that $S_h(\epsilon')=+\infty$ for some $\epsilon'>0$. But by parts (B) and (C) we have that $\mathbb{P}[Y(t)\to 0 \text{ as $t\to\infty$}]=0$, a contradiction.

To prove part (iii), note by hypothesis that part (C) of Theorem~\ref{theorem.Yclassify3} implies $\limsup_{t\to\infty}\|Y(t)\|=+\infty$ a.s. Now suppose that there is a $h>0$ such that $S_h(\epsilon')<+\infty$ for some $\epsilon'>0$. But by parts (A) and (B) we have that $\limsup_{t\to\infty}\|Y(t)\|<+\infty$ a.s., a contradiction.

To prove part (ii), we note by hypothesis that part (B) of Theorem~\ref{theorem.Yclassify3} implies
$0<\limsup_{t\to\infty} \|Y(t)\|<+\infty$ a.s. Suppose now there exists $h>0$ such that $S_h'(\epsilon)=+\infty$ for all $\epsilon>0$. Then we have that $\limsup_{t\to\infty} \|Y(t)\|=+\infty$ a.s., a contradiction. Suppose on the other hand that there is $h>0$ such that  $S_h'(\epsilon)<+\infty$ for all $\epsilon>0$.
Then we have that $\limsup_{t\to\infty} \|Y(t)\|=0$ a.s., a contradiction. Therefore it must follow that for each $h>0$ there exist
$\epsilon_h'', \epsilon_h'''>0$ such that $S_h'(\epsilon_h'')<+\infty$ and $S_h(\epsilon_h''')=+\infty$. Then as $\epsilon\mapsto S_h'(\epsilon)$ is a non--increasing function, it follows that for each $h>0$ there is an $\epsilon_h'>0$ such that $S_h'(\epsilon)<+\infty$ for all $\epsilon>\epsilon_h'$ and $S_h'(\epsilon)+=\infty$ for all $\epsilon<\epsilon_h'$.
\end{proof}

Combining Theorems~\ref{theorem.XYasy} and \ref{theorem.Yclassify3} we immediately get the following result concerning
the solutions of the differential equation \eqref{eq.stochlinode}.
\begin{theorem} \label{theorem.liniffsigma}
Suppose that $\sigma$ obeys \eqref{eq.sigmacns}. Let $A$ be a
$d\times d$ real matrix for which all eigenvalues have negative real
parts. Let $X$ be the solution of \eqref{eq.stochlinode} and suppose that $S_h'$ is defined by \eqref{def.Shpr}.
Then the following holds:
\begin{itemize}
\item[(A)] If $S_h'$ obeys \eqref{eq.thetastableh}, then  $\lim_{t\to\infty} X(t,\xi)=0$ a.s. for each
$\xi\in\mathbb{R}^d$;
\item[(B)] If $S_h'$ obeys \eqref{eq.thetaboundedh}, then there
exist deterministic $0<c_1\leq c_2<\infty$ independent of $\xi$ such
that
\[
 c_1\leq \limsup_{t\to\infty} \|X(t,\xi)\|\leq c_2, \quad \text{a.s.}
\]
Moreover
\begin{equation}  \label{eq.IcondnXbound}
\liminf_{t\to\infty} \|X(t,\xi)\|=0, \quad \lim_{t\to\infty}
\frac{1}{t}\int_0^t \|X(s,\xi)\|^2\,ds=0, \quad \text{a.s.}
\end{equation}
\item[(C)] If $S_h'$ obeys \eqref{eq.thetaunstableh}, then  $\limsup_{t\to\infty} \|X(t,\xi)\|=+\infty$ a.s. for each
$\xi\in\mathbb{R}^d$.
\end{itemize}
\end{theorem}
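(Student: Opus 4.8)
The plan is to obtain Theorem~\ref{theorem.liniffsigma} by chaining Theorems~\ref{theorem.XYasy} and~\ref{theorem.Yclassify3}, after first reducing to $\xi=0$ and then supplementing part~(B) with a short extra argument for its finer conclusions. Since \eqref{eq.stochlinode} is affine with additive noise, $X(t,\xi)-X(t,0)=e^{At}\xi$ solves \eqref{eq.linode}, and $A$ Hurwitz forces $e^{At}\xi\to 0$; hence $\lim$, $\limsup\|\cdot\|$, $\liminf\|\cdot\|$ and $\frac1t\int_0^t\|\cdot\|^2\,ds$ all agree in the limit for $X(\cdot,\xi)$ and $X(\cdot,0)$, so it suffices to treat $X:=X(\cdot,0)$ and the resulting constants are automatically independent of $\xi$. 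For part~(A), \eqref{eq.thetastableh} and Theorem~\ref{theorem.Yclassify3}(A) give $Y(t)\to 0$ a.s., and then Theorem~\ref{theorem.XYasy}(A) gives $X(t)\to 0$ a.s.; for part~(C), \eqref{eq.thetaunstableh} and Theorem~\ref{theorem.Yclassify3}(C) give $\limsup_{t\to\infty}\|Y(t)\|=+\infty$ a.s., which Theorem~\ref{theorem.XYasy}(C) transfers to $X$. These two parts need nothing beyond the chaining.

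For part~(B), \eqref{eq.thetaboundedh} and Theorem~\ref{theorem.Yclassify3}(B) furnish deterministic $0<c_1\le c_2<\infty$ with $c_1\le\limsup\|Y\|\le c_2$ a.s., together with $\liminf\|Y\|=0$ and zero Cesàro average of $\|Y\|^2$; Theorem~\ref{theorem.XYasy}(B) then shows $X$ is a.s.\ bounded, which supplies the upper bound in~(B). The one point not covered by the bare statement of Theorem~\ref{theorem.XYasy}(B) is the strict positivity of the lower bound, since the lower constant there may vanish. I would recover it by contradiction: $\{\limsup\|X\|=0\}=\{X(t)\to 0\}$ is a tail event, so if it has positive probability it has probability one; but the pathwise comparison underlying Theorem~\ref{theorem.XYasy} bounds $\|X\|$ from below by the same functional of $\sigma$ (transported through a Hurwitz semigroup) that bounds $\|Y\|$ from below, so $X(t)\to 0$ a.s.\ would force $Y(t)\to 0$ a.s., contradicting $\limsup\|Y\|\ge c_1>0$. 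This is exactly the converse half of the $X\leftrightarrow Y$ equivalence announced in the introduction, i.e.\ the principle that additive fading noise cannot stabilise a non-convergent solution. Hence $\limsup\|X\|$ is a.s.\ bounded below by a deterministic constant $>0$.

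It remains to transfer \eqref{eq.IcondnXbound} to $X$. The mechanism is that \eqref{eq.thetaboundedh} forces window-fading of the noise: with $\theta(n)^2:=\int_{nh}^{(n+1)h}\|\sigma(s)\|_F^2\,ds$, finiteness of \eqref{def.Shpr} for $\epsilon>\epsilon'$ forces its general term to $0$, and since $x\mapsto x\exp(-\epsilon^2/(2x^2))$ is continuous, strictly increasing and vanishes at $0$, we conclude $\theta(n)\to 0$; covering each $[t,t+h]$ by at most two consecutive partition cells gives $\int_t^{t+h}\|\sigma(s)\|_F^2\,ds\to 0$, hence $\frac1t\int_0^t\|\sigma(s)\|_F^2\,ds\to 0$. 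Now apply It\^o's formula to $v(x)=x^{\top}Px$, with $P\succ 0$ solving $A^{\top}P+PA=-I_d$ (which exists since $A$ is Hurwitz), to obtain
\[
v(X(t))=v(0)-\int_0^t\|X(s)\|^2\,ds+\int_0^t\mathrm{tr}\bigl(\sigma(s)^{\top}P\sigma(s)\bigr)\,ds+M(t),
\]
where $M$ is a local martingale with $\langle M\rangle_t\le C\int_0^t\|X(s)\|^2\|\sigma(s)\|_F^2\,ds=o(t)$ by boundedness of $X$ and $\int_0^t\|\sigma\|_F^2\,ds=o(t)$, so $M(t)/t\to 0$ a.s. Since $v(X(t))/t\to 0$ and $\frac1t\int_0^t\mathrm{tr}(\sigma^{\top}P\sigma)\,ds\le\|P\|\cdot\frac1t\int_0^t\|\sigma\|_F^2\,ds\to 0$, dividing by $t$ and rearranging gives $\frac1t\int_0^t\|X(s)\|^2\,ds\to 0$ a.s.; a positive $\liminf\|X(t)\|$ would contradict this, so $\liminf\|X(t)\|=0$ a.s. as well, which is \eqref{eq.IcondnXbound}.

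The routine chaining disposes of (A), (C) and the upper bound in (B) at once. The genuine obstacles are twofold: first, the strict positivity of the lower constant in (B) — which the statement of Theorem~\ref{theorem.XYasy}(B) does not by itself deliver, and which requires the converse direction of the $X\leftrightarrow Y$ correspondence; and second, promoting $\liminf\|Y\|=0$ and the zero pathwise average of $\|Y\|^2$ to the corresponding statements for $X$, for which the Lyapunov/It\^o identity above is the clean route, contingent on first extracting window-fading of $\sigma$ from \eqref{def.Shpr}.
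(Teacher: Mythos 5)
Your proposal is essentially the paper's own route: chain Theorems~\ref{theorem.XYasy} and \ref{theorem.Yclassify3} for the headline convergence, boundedness and unboundedness statements, and then supply \eqref{eq.IcondnXbound} by the Lyapunov/It\^o argument with $V(x)=x^{T}Mx$, $A^{T}M+MA=-I_d$, extracting window--fading of $\sigma$ from the finiteness of $S_h'(\epsilon)$ for large $\epsilon$ and then using the strong law of large numbers for the resulting local martingale, exactly as in the paper's dedicated section. The one spot where you take a slightly longer road than the paper is the strict positivity of the lower constant in part (B): you argue by a Kolmogorov $0$--$1$ law for the tail event $\{\limsup\|X\|=0\}$ together with the converse implication ``$X\to 0\Rightarrow Y\to 0$''. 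That works provided you note that $\limsup\|X\|$ is measurable with respect to the tail $\sigma$--algebra of the Brownian increments (which follows from $X(t)=e^{A(t-T)}X(T)+\int_T^t e^{A(t-s)}\sigma(s)\,dB(s)$ and Hurwitz decay), so the $0$--$1$ law really does make $\limsup\|X\|$ a.s.\ constant; but the detour is not actually needed, because the paper's proof of the lower bound in Theorem~\ref{theorem.XYasy}(B) already produces the explicit deterministic constant $c_3=c_1/(1+\|I+A\|_2)>0$ whenever $c_1>0$, via the identity $Y(t)=X(t)-\xi e^{-t}-\int_0^t e^{-(t-s)}(I+A)X(s)\,ds$ that you also invoke. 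With that observation your argument and the paper's coincide step for step, and your explicit reduction to $\xi=0$ via $X(t,\xi)-X(t,0)=e^{At}\xi\to 0$ is a harmless tidying of bookkeeping that the paper leaves implicit.
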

In the last case, when $S_h'(\epsilon)=+\infty$ for all $\epsilon>0$, it is interesting to ask what is the limit inferior and ergodic behaviour of $\|X(t)\|$. It is very much in the spirit of this work to ask what happens when the noise intensity fades (in some sense)
as $t\to\infty$. In cases (A) and (B), $S_h'(\epsilon)<+\infty$ for all $\epsilon$ sufficiently large. This implies that
\begin{equation}  \label{eq.sighto0}
\lim_{n\to\infty}\int_{nh}^{(n+1)h} \|\sigma(s)\|^2_F\,ds= 0.
\end{equation}
Making this additional fading noise hypothesis, we can describe more completely the limiting asymptotic behaviour of $X$ in the case when $S_h'(\epsilon)=+\infty$.
\begin{theorem} \label{theorem.liniffsigmafadeS}
Suppose that $\sigma$ obeys \eqref{eq.sigmacns}. Let $A$ be a
$d\times d$ real matrix for which all eigenvalues have negative real
parts. Let $X$ be the solution of \eqref{eq.stochlinode} and suppose that $S_h'$ is defined by \eqref{def.Shpr}.
Suppose further that \eqref{eq.sighto0} holds.
\begin{itemize}
\item[(A)] If $S_h'$ obeys \eqref{eq.thetastableh}, then  $\lim_{t\to\infty} X(t,\xi)=0$ a.s. for each
$\xi\in\mathbb{R}^d$;
\item[(B)] If $S_h'$ obeys \eqref{eq.thetaboundedh}, then there
exist deterministic $0<c_1\leq c_2<\infty$ independent of $\xi$ such
that
\[
 c_1\leq \limsup_{t\to\infty} \|X(t,\xi)\|\leq c_2, \quad \text{a.s.}
\]
Moreover, $X$ obeys \eqref{eq.IcondnXbound}.
\item[(C)] If $S_h'$ obeys \eqref{eq.thetaunstableh}, then
\[
\limsup_{t\to\infty} \|X(t,\xi)\|=+\infty, \quad\text{a.s. for each
$\xi\in\mathbb{R}^d$}
\]
Moreover,  $X$ obeys \eqref{eq.IcondnXbound}.
\end{itemize}
\end{theorem}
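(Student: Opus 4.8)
The plan is to derive this result from Theorem~\ref{theorem.liniffsigma} together with the fading noise hypothesis \eqref{eq.sighto0}, the only genuinely new content being the assertions in parts (B) and (C) that $\liminf_{t\to\infty}\|X(t,\xi)\|=0$ a.s.\ and that the ergodic average \eqref{eq.IcondnXbound} vanishes, under the extra hypothesis \eqref{eq.sighto0}. Parts (A) and (B)'s $\limsup$ bounds, and part (C)'s $\limsup=+\infty$ claim, are verbatim what Theorem~\ref{theorem.liniffsigma} already gives, so nothing needs to be done there. Thus the first step is simply to quote Theorem~\ref{theorem.liniffsigma} for all the $\limsup$ statements and reduce to proving the liminf/ergodic claims in (B) and (C).

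The second step is to handle the ergodic average. Using the representation \eqref{eq.Yformmult} for $Y$ and the fact (from Theorem~\ref{theorem.XYasy} and its proof machinery) that $X(t,\xi)-e^{At}\xi$ and $Y(t)$ differ only by a process whose $L^2$-norm is controlled by $\int_t^{t+c}\|\sigma\|_F^2\,ds$, I would first establish \eqref{eq.aveX0} for $X$ by establishing it for $Y$. For $Y$ one computes $\mathbb{E}\|Y(t)\|^2 = e^{-2t}\int_0^t e^{2s}\|\sigma(s)\|_F^2\,ds$; under \eqref{eq.sighto0} one has $\int_t^{t+h}\|\sigma\|_F^2\,ds\to 0$, which by a standard Toeplitz/averaging argument forces $e^{-2t}\int_0^t e^{2s}\|\sigma(s)\|_F^2\,ds\to 0$, i.e.\ $\mathbb{E}\|Y(t)\|^2\to 0$. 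One then upgrades this to the pathwise ergodic statement $\frac1t\int_0^t\|Y(s)\|^2\,ds\to 0$ a.s.\ by a Fubini argument on $\mathbb{E}\frac1t\int_0^t\|Y(s)\|^2\,ds$ combined with an almost-sure control of the fluctuations of $\int_0^t\|Y(s)\|^2\,ds$ around its mean, using that $Y$ is a Gaussian process with explicitly bounded covariance; the exponential contraction $e^{-t}$ makes the correlations integrable. Because $\|e^{At}\xi\|\to 0$ exponentially, the ergodic average for $X$ follows from that for $Y$ via $\|X(t,\xi)\|\le \|e^{At}\xi\| + \|(X(t,\xi)-e^{At}\xi) - Y(t)\| + \|Y(t)\|$ and the fact that the middle term is, pathwise, asymptotically dominated by $\sup_{s\le t}\|Y(s)\|$-type quantities that are themselves $o(\sqrt t)$ in the relevant averaged sense.

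The third step treats $\liminf_{t\to\infty}\|X(t,\xi)\|=0$. Here the ergodic conclusion already does most of the work: if $\frac1t\int_0^t\|X(s,\xi)\|^2\,ds\to 0$ a.s., then $\|X(t,\xi)\|^2$ cannot be bounded below by a positive constant on a set of times of positive density, and since $X$ is continuous this forces $\liminf_{t\to\infty}\|X(t,\xi)\|=0$ on the same almost sure event. (One argues by contradiction: if $\liminf\|X(t,\xi)\|=\delta>0$ on a set of positive probability, then $\|X(s,\xi)\|^2\ge \delta^2/2$ for all large $s$ on that set, whence $\frac1t\int_0^t\|X(s,\xi)\|^2\,ds\ge \delta^2/4$ for all large $t$, contradicting the ergodic limit.) In case (C) this liminf statement is the substantive extra information, since there $\limsup\|X(t,\xi)\|=+\infty$; combined with $\liminf=0$ it gives the recurrence picture described in the introduction.

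The main obstacle I anticipate is the passage from the mean-square statement $\mathbb{E}\|Y(t)\|^2\to 0$ to the almost-sure ergodic statement $\frac1t\int_0^t\|Y(s)\|^2\,ds\to 0$: one must show that the time-average of the Gaussian process $\|Y(s)\|^2$ concentrates on its (vanishing) mean. This needs a quantitative bound on $\mathrm{Cov}(\|Y(s)\|^2,\|Y(u)\|^2)$ — which, because $Y$ is Gaussian with covariance kernel decaying like $e^{-|s-u|}$ when \eqref{eq.sighto0} holds, is of order (a product of the small variances) times $e^{-|s-u|}$ — followed by a variance estimate for $\frac1t\int_0^t\|Y(s)\|^2\,ds$ and a Borel--Cantelli argument along a suitable subsequence $t_n$, with continuity of $t\mapsto\int_0^t\|Y\|^2$ filling the gaps. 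This is exactly the type of argument used to prove \eqref{eq.liminfYaveY0} in case (B) of Theorem~\ref{theorem.Yclassify3}, so in practice I would isolate it as a lemma valid whenever \eqref{eq.sighto0} holds (irrespective of which of (A), (B), (C) obtains) and then apply it uniformly across all three cases.
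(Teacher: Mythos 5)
Your overall plan is a genuinely different route from the paper's, and that difference is worth understanding, but there is one real gap in the transfer from $Y$ to $X$ that would sink the argument as written.

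The paper does not establish the ergodic limit for $Y$ by a covariance--concentration argument and then transfer it. Instead it works \emph{directly} with $X$: using the Lyapunov matrix $M$ solving $A^TM+MA=-I_d$ and It\^o's formula it obtains the exact identity \eqref{eq.masterV1},
\[
V(X(t))=V(\xi)-\int_0^t X(s)^TX(s)\,ds+\int_0^t\|P^T\sigma(s)\|_F^2\,ds+K(t),
\]
with $K$ a local martingale. It then bounds $\langle K\rangle(t)$ via the a.s.\ growth estimate $\|X(t)\|^2=o(\log t)$, which in turn comes from the componentwise law of the iterated logarithm for $Y$ combined with variation of constants. Your claim that the covariance argument ``is exactly the type of argument used to prove \eqref{eq.liminfYaveY0} in case (B) of Theorem~\ref{theorem.Yclassify3}'' is therefore not accurate; the paper's proof of (B) is this same Lyapunov--It\^o argument (with the simpler input that $\|X\|$ is bounded a.s.\ in that case), not a variance computation.

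The genuine gap is in your step passing from $\tfrac1t\int_0^t\|Y(s)\|^2\,ds\to0$ to $\tfrac1t\int_0^t\|X(s)\|^2\,ds\to0$. You bound the middle term $z_0(t)=\int_0^t\Psi(t-s)(I+A)Y(s)\,ds$ pointwise by a constant times $\sup_{s\le t}\|Y(s)\|$. Under \eqref{eq.sighto0} the best one gets that way is $\|z_0(t)\|=o(\sqrt{\log t})$ a.s., which only yields $\tfrac1t\int_0^t\|z_0(s)\|^2\,ds=o(\log t)$, not $o(1)$. The correct reduction is to square the convolution and apply Cauchy--Schwarz (or Jensen) with respect to the exponential kernel:
\[
\Bigl(\int_0^s e^{-\lambda(s-u)}\|Y(u)\|\,du\Bigr)^2\le\frac{1}{\lambda}\int_0^s e^{-\lambda(s-u)}\|Y(u)\|^2\,du,
\]
and then swap the order of integration to get $\tfrac1t\int_0^t\|z_0(s)\|^2\,ds\le\tfrac{C}{\lambda t}\int_0^t\|Y(u)\|^2\,du\to0$. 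Without that step your reduction does not close, even though the covariance--concentration argument for $Y$ itself (exponential decay of $\mathrm{Cov}(\|Y(s)\|^2,\|Y(u)\|^2)$ in $|s-u|$, bounded $\mathrm{Var}(\|Y(s)\|^2)$, Chebyshev plus Borel--Cantelli along $t_n=n^2$, then fill the gaps by monotonicity) is in principle sound. Once the transfer step is repaired, your route works and is arguably more elementary for this Gaussian autonomous case; the paper's Lyapunov--It\^o strategy has the advantage of extending unchanged to the periodic setting of Theorem~\ref{theorem.stochfloqsmallnoise} and, potentially, to nonlinear drifts where $X$ is no longer a simple linear functional of $Y$.
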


The condition \eqref{eq.sighto0}  is interesting because it is equivalent to asking that all solutions of \eqref{eq.stochlinode} converge to zero in mean--square.
Results yielding sufficient conditions for mean square stability of linear stochastic differential equations abound, and no claim is made for the novelty of 
the result below. However, we believe that the formulation of the result is of interest when placed in the context of our analysis of a.s. asymptotic behaviour.  
\begin{proposition} \label{prop.meansquare}
Suppose that $\sigma$ obeys \eqref{eq.sigmacns}. Let $A$ be a
$d\times d$ real matrix for which all eigenvalues have negative real
parts. Let $X$ be the solution of \eqref{eq.stochlinode}. 
Then the following are equivalent:
\begin{itemize}
\item[(A)] $\sigma$ obeys \eqref{eq.sighto0} for some $h>0$;
\item[(B)] $\sigma$ obeys \eqref{eq.sighto0} for all $h>0$;
\item[(C)] $\lim_{t\to\infty} \int_t^{t+1}\|\sigma(s)\|^2_F\,ds=0$;
\item[(D)] $\lim_{t\to\infty} \mathbb{E}[\|X(t)\|^2]=0$.
\end{itemize}
\end{proposition}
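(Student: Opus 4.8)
The plan is to establish the equivalences $(A)\Leftrightarrow(B)\Leftrightarrow(C)$ by elementary real analysis and then to close the loop with $(C)\Leftrightarrow(D)$ via the variation--of--constants formula and It\^o's isometry. Throughout write $g(s)=\|\sigma(s)\|^2_F$, which is nonnegative and continuous by \eqref{eq.sigmacns}. The implication $(B)\Rightarrow(A)$ is immediate. For $(C)\Rightarrow(B)$, fix $h>0$ and cover $[nh,(n+1)h]$ by at most $\lceil h\rceil$ subintervals of length at most one, giving $\int_{nh}^{(n+1)h}g(s)\,ds\leq\sum_{k=0}^{\lceil h\rceil-1}\int_{nh+k}^{nh+k+1}g(s)\,ds$; each summand tends to $0$ as $n\to\infty$ by $(C)$ while the number of summands is fixed, so the block integral tends to $0$. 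For $(A)\Rightarrow(C)$, suppose the block integrals over blocks of length $h_0$ vanish; since $[t,t+1]$ meets at most $\lceil 1/h_0\rceil+1$ consecutive such blocks, $\int_t^{t+1}g(s)\,ds$ is bounded by a fixed-length sum of block integrals whose indices all diverge with $t$, hence tends to $0$. The only mildly delicate point here is that both coverings must be valid for every $h_0>0$, small as well as large.

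For the substantive equivalence $(C)\Leftrightarrow(D)$ I would start from the representation
\[
X(t)=e^{At}\xi+\int_0^t e^{A(t-s)}\sigma(s)\,dB(s),
\]
use that the stochastic integral has mean zero, and apply It\^o's isometry for vector--valued integrals to get
\[
\mathbb{E}[\|X(t)\|^2]=\|e^{At}\xi\|^2+\int_0^t \|e^{A(t-s)}\sigma(s)\|^2_F\,ds .
\]
Since all eigenvalues of $A$ have negative real part there are $K\geq 1$, $\alpha>0$ with $\|e^{Au}\|\leq Ke^{-\alpha u}$ for $u\geq 0$, and the substitution $u=t-s$ then yields the bound $\mathbb{E}[\|X(t)\|^2]\leq K^2e^{-2\alpha t}\|\xi\|^2+K^2\int_0^t e^{-2\alpha(t-s)}g(s)\,ds$. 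To deduce $(C)\Rightarrow(D)$ it remains to prove the auxiliary fact that $\int_t^{t+1}g(s)\,ds\to 0$ forces $\int_0^t e^{-\beta(t-s)}g(s)\,ds\to 0$ for every $\beta>0$: decompose the integral over the unit intervals $[t-k-1,t-k]$ (truncated at $0$), bound the kernel on the $k$--th piece by $e^{-\beta k}$, and apply dominated convergence to the series $\sum_{k}e^{-\beta k}\psi_k(t)$, where $\psi_k(t)$ is the integral of $g$ over that piece, a quantity bounded uniformly in $k$ and $t$ (because $u\mapsto\int_u^{u+1}g$ is continuous with limit $0$) and tending to $0$ as $t\to\infty$ for each fixed $k$. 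Together with $\|e^{At}\xi\|^2\to 0$ this gives $(D)$.

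For $(D)\Rightarrow(C)$ I would discard the nonnegative deterministic term and restrict the time integral to $u\in[0,1]$ after the substitution $u=t-s$: $\mathbb{E}[\|X(t)\|^2]\geq\int_0^1\|e^{Au}\sigma(t-u)\|^2_F\,du$. Because $u\mapsto e^{-Au}$ is continuous, it is bounded in norm by some $C<\infty$ on $[0,1]$, so $\|v\|=\|e^{-Au}e^{Au}v\|\leq C\|e^{Au}v\|$ for $u\in[0,1]$ and every $v$; applying this columnwise gives $\|e^{Au}\sigma(t-u)\|_F\geq\|\sigma(t-u)\|_F/C$, hence $\mathbb{E}[\|X(t)\|^2]\geq C^{-2}\int_{t-1}^{t}g(s)\,ds$, and $(D)$ then forces $\int_{t-1}^t g(s)\,ds\to 0$, i.e.\ $(C)$ after a shift. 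The one step that is not pure bookkeeping is the exponential--convolution lemma used for $(C)\Rightarrow(D)$; everything else reduces to manipulating nonnegative integrals together with the standard decay estimate $\|e^{Au}\|\leq Ke^{-\alpha u}$ and the compactness bound on $\|e^{-Au}\|$ over $u\in[0,1]$. Finally, taking $\xi=0$ in the representation recovers $Y$, so the same computation also characterises mean--square stability of \eqref{def.Y}.
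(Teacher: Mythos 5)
Your proof is correct, and the broad outline matches the paper's: the $(A)\Leftrightarrow(B)\Leftrightarrow(C)$ cycle is disposed of with elementary covering arguments, $(C)\Rightarrow(D)$ passes through It\^o's isometry and an exponential--convolution lemma, and $(D)\Rightarrow(C)$ extracts the block integral of $\|\sigma\|_F^2$ from the second moment. The auxiliary lemma you propose for $(C)\Rightarrow(D)$ (that $\int_t^{t+1}g\to 0$ implies $\int_0^t e^{-\beta(t-s)}g(s)\,ds\to 0$) is precisely the paper's Lemma on discrete convolution of a summable kernel against a null sequence, so that half of the argument coincides essentially line for line. Where you genuinely diverge is in $(D)\Rightarrow(C)$: the paper isolates $\int_t^{t+1}\sigma(s)\,dB(s)$ directly from the SDE by writing $\int_t^{t+1}\sigma(s)\,dB(s)=X(t+1)-X(t)-\int_t^{t+1}AX(s)\,ds$, applies It\^o's isometry to the left side, and lets $\mathbb{E}\|X\|^2\to 0$ kill the right side; you instead keep the variation--of--constants formula and produce a \emph{lower} bound for the second moment by exploiting $\sup_{u\in[0,1]}\|e^{-Au}\|<\infty$ to undo the flow on a unit window, $\|e^{Au}\sigma(t-u)\|_F\geq C^{-1}\|\sigma(t-u)\|_F$. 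Your route avoids the triangle--inequality expansion implicitly needed in the paper's argument and uses only the nonnegativity of the integrand plus compactness of $u\mapsto e^{-Au}$ over $[0,1]$; the paper's route is arguably cleaner in that it never needs a lower bound on $\|e^{Au}M\|_F$ at all and sidesteps the explicit convolution formula. Both are sound, and both hinge on exactly the same use of It\^o's isometry.
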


Given that the equations studied are in continuous time, it is natural to ask whether
the summation conditions can be replaced by integral conditions on $\sigma$ instead. The answer is in
the affirmative.
To this end we introduce for fixed $c>0$ the $\epsilon$--dependent
integral
\begin{equation} \label{def.Ieps}
I_c(\epsilon)=\int_0^\infty \varsigma_c(t)
\exp\left(-\frac{\epsilon^2/2}{\varsigma_c(t)^2
}\right)\chi_{(0,\infty)}\left( \varsigma_c(t)\right)\,dt,
\end{equation}
where we have defined
\begin{equation} \label{def.varsigc}
\varsigma_c(t):= \left(\int_{t}^{t+c}
\|\sigma(s)\|^2_F\,ds\right)^{1/2}, \quad t\geq 0.
\end{equation}
We notice that $\epsilon\mapsto I_c(\epsilon)$ is a monotone function,
and therefore $I_c(\cdot)$ is either finite for all $\epsilon>0$;
infinite for all $\epsilon>0$; or finite for all
$\epsilon>\epsilon'$ and infinite for all $\epsilon<\epsilon'$. The
following theorem is therefore seen to classify the asymptotic
behaviour of \eqref{def.Y}.
\begin{theorem} \label{theorem.Icondn}
Suppose that $\sigma$ obeys \eqref{eq.sigmacns} and that $Y$ is the
unique continuous adapted process which obeys \eqref{def.Y}. Let $c>0$,
$I_c(\cdot)$ be defined by \eqref{def.Ieps}, and $\varsigma_c$ by \eqref{def.varsigc}.
\begin{itemize}
\item[(A)] If
\begin{equation} \label{eq.Istable}
\text{$I_c(\epsilon)$ is finite for all $\epsilon>0$},
\end{equation}
then $\lim_{t\to\infty} Y(t)=0$ a.s.
\item[(B)] If there exists $\epsilon'>0$ such that
\begin{equation}  \label{eq.Ibounded}
\text{$I_c(\epsilon)$ is finite for all $\epsilon>\epsilon'$}, \quad
\text{$I_c(\epsilon)=+\infty$ for all $\epsilon<\epsilon'$},
\end{equation}
then there exist deterministic $0<c_1\leq c_2<+\infty$ such that
\[
c_1\leq \limsup_{t\to\infty} \|Y(t)\|\leq c_2, \quad\text{a.s.}
\]
Moreover, $Y$ also obeys  \eqref{eq.liminfYaveY0}.
\item[(C)] If
\begin{equation} \label{eq.Iunstable}
\text{$I_c(\epsilon)=+\infty$ for all $\epsilon>0$},
\end{equation}
then $\limsup_{t\to\infty} \|Y(t)\|=+\infty$ a.s.
\end{itemize}
\end{theorem}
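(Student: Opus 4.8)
The plan is to reduce Theorem~\ref{theorem.Icondn} to Theorem~\ref{theorem.Yclassify3} by showing that the trichotomy governing the integral $I_c(\cdot)$ coincides with the one governing the sum $S_h'(\cdot)$ when $h=c$. Write $\theta(n)^2:=\int_{nc}^{(n+1)c}\|\sigma(s)\|^2_F\,ds$, so that $\theta(n)=\varsigma_c(nc)$ and the $n$-th summand of $S_c'(\epsilon)$ is $g_\epsilon(\theta(n)^2)$, where $g_\epsilon(x):=\sqrt{x}\exp(-\epsilon^2/(2x))$ for $x>0$ and $g_\epsilon(0):=0$; with this convention $I_c(\epsilon)=\int_0^\infty g_\epsilon(\varsigma_c(t)^2)\,dt$. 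I will establish a two-sided comparison
\[
\frac{c}{2\sqrt 2}\,S_c'(\sqrt 2\,\epsilon)\;\le\; I_c(\epsilon)\;\le\;\sqrt 2\,c\,g_{\epsilon/\sqrt 2}(\theta(0)^2)+2\sqrt 2\,c\,S_c'(\epsilon/\sqrt 2),\qquad \epsilon>0,
\]
and then read off (A), (B), (C) from the corresponding parts of Theorem~\ref{theorem.Yclassify3}: part (A) follows since $I_c$ finite for all $\epsilon$ forces, via the left inequality, $S_c'$ finite for all $\epsilon$; part (C) follows since $I_c$ infinite for all $\epsilon$ forces, via the right inequality, $S_c'$ infinite for all $\epsilon$; and in the intermediate regime \eqref{eq.Ibounded} the two inequalities pin $S_c'$ into case (B) of Theorem~\ref{theorem.Yclassify3}, with threshold lying between $\epsilon'/\sqrt 2$ and $\sqrt 2\,\epsilon'$. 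The monotonicity of $\epsilon\mapsto I_c(\epsilon)$ and of $\epsilon\mapsto S_c'(\epsilon)$ already noted before the theorem makes this threshold bookkeeping routine.

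For the comparison I record two elementary facts about $g_\epsilon$: it is continuous and strictly increasing on $[0,\infty)$ (indeed $(\log g_\epsilon)'(x)=\tfrac1{2x}+\tfrac{\epsilon^2}{2x^2}>0$), and it satisfies $g_\epsilon(\tfrac12 x)=\tfrac1{\sqrt 2}g_{\sqrt 2\epsilon}(x)$ together with $g_\epsilon(a+b)\le\sqrt 2\,(g_{\epsilon/\sqrt 2}(a)+g_{\epsilon/\sqrt 2}(b))$ (from $a+b\le 2(a\vee b)$ and $g_\epsilon(a\vee b)=g_\epsilon(a)\vee g_\epsilon(b)\le g_\epsilon(a)+g_\epsilon(b)$). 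For the upper bound I split $I_c(\epsilon)=\sum_{n\ge 0}\int_{nc}^{(n+1)c}g_\epsilon(\varsigma_c(t)^2)\,dt$; since $[t,t+c]\subseteq[nc,(n+2)c]$ for $t\in[nc,(n+1)c]$ we have $\varsigma_c(t)^2\le\theta(n)^2+\theta(n+1)^2$, so monotonicity of $g_\epsilon$ bounds the $n$-th integral by $c\,g_\epsilon(\theta(n)^2+\theta(n+1)^2)$, and summing and applying the subadditivity inequality gives the right-hand estimate. For the lower bound I use the key observation that for $t\in[nc,(n+1)c]$ with $n\ge 1$,
\[
\varsigma_c(t)^2+\varsigma_c(t-c)^2=\int_{t-c}^{t+c}\|\sigma(s)\|^2_F\,ds\ge\int_{nc}^{(n+1)c}\|\sigma(s)\|^2_F\,ds=\theta(n)^2,
\]
whence $\varsigma_c(t)^2\vee\varsigma_c(t-c)^2\ge\tfrac12\theta(n)^2$ and therefore $g_\epsilon(\varsigma_c(t)^2)+g_\epsilon(\varsigma_c(t-c)^2)\ge g_\epsilon(\tfrac12\theta(n)^2)=\tfrac1{\sqrt 2}g_{\sqrt 2\epsilon}(\theta(n)^2)$. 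Integrating over $t\in[nc,(n+1)c]$ and summing over $n\ge 1$ (each unit cell being counted at most twice) yields $2I_c(\epsilon)\ge\frac{c}{\sqrt 2}S_c'(\sqrt 2\epsilon)$.

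The extra conclusions in part (B) beyond the $\limsup$ bounds — namely $\liminf_{t\to\infty}\|Y(t)\|=0$ and $\lim_{t\to\infty}\frac1t\int_0^t\|Y(s)\|^2\,ds=0$, i.e. \eqref{eq.liminfYaveY0} — are then inherited verbatim from part (B) of Theorem~\ref{theorem.Yclassify3}, whose hypothesis we have just verified. I do not expect a serious obstacle here: the only point needing care is the handling of the $\sqrt 2$-rescalings of $\epsilon$ when passing between $I_c$ and $S_c'$ (which is exactly why a clean two-sided comparison, rather than an exact identification, is the natural target), together with the degenerate cases $\varsigma_c(t)=0$ or $\theta(n)=0$, where the convention $g_\epsilon(0)=0$ — matching the indicator $\chi_{(0,\infty)}$ in \eqref{def.Ieps} — keeps every inequality valid.
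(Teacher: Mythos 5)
Your proof is correct but proceeds by a genuinely different route from the paper. The paper does not compare $I_c$ with the uniform-grid sum $S_c'$ at all; instead it proves Lemma~\ref{lemma.xintsum} (a general sampling lemma that, for a nonnegative continuous integrable or non-integrable $x$, constructs a non-uniform sequence $(t_n)$ with spacing in $[h,3h]$ by taking successive arginf's or argsup's of $x$ over blocks of length $h$, so that $\sum x(t_n)$ inherits finiteness or divergence from $\int x$), then Lemma~\ref{lemma.Iepssum} (which feeds the integrand $\phi_1(\varsigma_c(t)/\epsilon)$ into Lemma~\ref{lemma.xintsum}, observes the resulting sequence is $\epsilon$-independent, and translates back from $\phi_\epsilon$ to $1-\Phi$ via Mill's ratio), and finally invokes the more general key result Theorem~\ref{theorem.Yclassify2}, whose hypotheses allow arbitrary sequences $(t_n)$ subject only to $0<\alpha\leq t_{n+1}-t_n\leq\beta<\infty$. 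You avoid that machinery entirely: by exploiting the three algebraic identities for $g_\epsilon$ (monotonicity, the scaling $g_\epsilon(x/2)=g_{\sqrt2\epsilon}(x)/\sqrt2$, and the subadditivity-type bound via $a+b\le 2(a\vee b)$) and the elementary block inclusions $[t,t+c]\subseteq[nc,(n+2)c]$ and $[nc,(n+1)c]\subseteq[t-c,t+c]$, you derive the explicit two-sided sandwich $\frac{c}{2\sqrt2}S_c'(\sqrt2\epsilon)\le I_c(\epsilon)\le\sqrt2 c\, g_{\epsilon/\sqrt2}(\theta(0)^2)+2\sqrt2 c\, S_c'(\epsilon/\sqrt2)$, so that the trichotomy for $I_c$ maps onto the trichotomy for $S_c'$ with the threshold merely shifting within a factor $\sqrt2$ — which is harmless because monotonicity of $S_c'$ then forces it into the matching case of Theorem~\ref{theorem.Yclassify3} with $h=c$. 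Your route is shorter and more self-contained: it needs only the uniform-grid Theorem~\ref{theorem.Yclassify3} (not Theorem~\ref{theorem.Yclassify2}), skips Mill's ratio, and in exchange for the threshold being localised to $[\epsilon'/\sqrt2,\sqrt2\epsilon']$ rather than pinned, gives an explicit comparison constant. The paper's approach buys a reusable sampling lemma that does not depend on the specific form of the integrand, which the authors presumably keep for flexibility; for this theorem alone your argument is the cleaner one.
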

Using this result and Theorem~\ref{theorem.XYasy}, we immediately arrive at a classification theorem for
the solution of \eqref{eq.stochlinode}.
\begin{theorem} \label{theorem.IcondnX}
Suppose that $\sigma$ obeys \eqref{eq.sigmacns} and that $X$ is the
unique continuous adapted process which obeys
\eqref{eq.stochlinode}. Suppose all the eigenvalues of $A$ have
negative real parts. Let $c>0$, $I_c(\cdot)$ be defined by
\eqref{def.Ieps}, and $\varsigma_c$ by \eqref{def.varsigc}.
\begin{itemize}
\item[(A)] If $I_c$ obeys \eqref{eq.Istable}, then
$\lim_{t\to\infty} X(t,\xi)=0$, a.s.
\item[(B)] If $I_c$ obeys \eqref{eq.Ibounded},
then there exist deterministic $0<c_1\leq c_2<+\infty$ independent of $\xi$ such that
\begin{equation*} 
c_1\leq \limsup_{t\to\infty} \|X(t,\xi)\|\leq c_2, \quad \text{a.s.}
\end{equation*}
Moreover $X$ obeys \eqref{eq.IcondnXbound}.
\item[(C)] If $I_c$ obeys \eqref{eq.Iunstable} then
$\limsup_{t\to\infty} \|X(t,\xi)\|=+\infty$ a.s.
\end{itemize}
\end{theorem}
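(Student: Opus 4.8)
The plan is to read the three cases off results already in hand. If $I_c$ obeys \eqref{eq.Istable}, then Theorem~\ref{theorem.Icondn}(A) gives $\lim_{t\to\infty}Y(t)=0$ a.s., and, the spectrum of $A$ lying in the open left half--plane, Theorem~\ref{theorem.XYasy}(A) upgrades this to $\lim_{t\to\infty}X(t,\xi)=0$ a.s.\ for every $\xi\in\mathbb{R}^d$; this is (A). If $I_c$ obeys \eqref{eq.Iunstable}, then Theorem~\ref{theorem.Icondn}(C) gives $\limsup_{t\to\infty}\|Y(t)\|=+\infty$ a.s., and Theorem~\ref{theorem.XYasy}(C) transfers this to $X$; this is (C). If $I_c$ obeys \eqref{eq.Ibounded}, Theorem~\ref{theorem.Icondn}(B) supplies deterministic $0<c_1\le c_2<+\infty$ with $c_1\le\limsup_{t\to\infty}\|Y(t)\|\le c_2$ a.s.\ together with \eqref{eq.liminfYaveY0}; feeding $0\le\liminf\|Y\|\le\limsup\|Y\|\le c_2$ into Theorem~\ref{theorem.XYasy}(B) already yields the finite upper bound $\limsup_{t\to\infty}\|X(t,\xi)\|\le c_4$ a.s.

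What does not follow from the \emph{statement} of Theorem~\ref{theorem.XYasy}(B) --- which records only preservation of boundedness, i.e.\ an upper envelope --- is the remainder of (B): a strictly positive deterministic lower bound for $\limsup\|X\|$, and the pathwise assertions \eqref{eq.IcondnXbound}. There are two ways to supply these. Either one verifies that the increment comparison underlying Theorem~\ref{theorem.XYasy} is genuinely two--sided, so that $\liminf=0$ and the vanishing of the pathwise time average pass from $Y$ to $X$ as well; or, more robustly, one reduces Theorem~\ref{theorem.IcondnX} directly to Theorem~\ref{theorem.liniffsigma} by proving the deterministic equivalence of the trichotomies generated by $I_c$ and by $S_h'$: that $I_c(\epsilon)<+\infty$ for all $\epsilon>0$ if and only if $S_h'(\epsilon)<+\infty$ for all $\epsilon>0$, and likewise with ``$=+\infty$ for all $\epsilon>0$'' throughout, the mixed regime \eqref{eq.Ibounded}/\eqref{eq.thetaboundedh} then matching by exclusion. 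Granted this, \eqref{eq.Istable}, \eqref{eq.Ibounded}, \eqref{eq.Iunstable} are interchangeable with \eqref{eq.thetastableh}, \eqref{eq.thetaboundedh}, \eqref{eq.thetaunstableh}, and (A)--(C) of Theorem~\ref{theorem.IcondnX} --- the full statement of (B) included --- become exactly (A)--(C) of Theorem~\ref{theorem.liniffsigma}. In cases (A) and (B) one moreover recovers \eqref{eq.sighto0}, since $S_h'(\epsilon)<+\infty$ for some $\epsilon$ forces $\theta(n):=(\int_{nh}^{(n+1)h}\|\sigma(s)\|_F^2\,ds)^{1/2}\to 0$ (as $x\mapsto x\exp(-\epsilon^2/2x^2)$ is increasing on $(0,\infty)$), so that Theorem~\ref{theorem.liniffsigmafadeS} is in force too.

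The main obstacle is therefore the deterministic equivalence between the sum \eqref{def.Shpr} and the integral \eqref{def.Ieps}. It is not a routine integral test: $\varsigma_c$ from \eqref{def.varsigc} need not be monotone, and $x\mapsto x\exp(-\epsilon^2/2x^2)$ is strongly nonlinear, so the value of the integrand at a point cannot be compared with a single summand (if the mass of $\|\sigma\|_F^2$ clusters near window endpoints, $\varsigma_c$ can dip far below both neighbouring $\theta$'s, which effectively changes the exponent). The plan is to partition $[0,\infty)$ into the windows $[nh,(n+1)h]$ of \eqref{def.Shpr} and, on each, estimate $\int \varsigma_c(t)\exp(-\epsilon^2/2\varsigma_c(t)^2)\,dt$ above and below by expressions in the quantities $\int_{mh}^{(m+1)h}\|\sigma(s)\|_F^2\,ds$ for the $O(1)$ indices $m$ whose window meets $[nh-c,(n+1)h+c]$, using additivity of $t\mapsto\int_0^t\|\sigma(s)\|_F^2\,ds$ and monotonicity of $x\mapsto x\exp(-\epsilon^2/2x^2)$; the two resulting series are then compared with $S_h'$ up to a bounded index shift, a bounded change of window length (harmless by Proposition~\ref{prop.ShS1}), and a bounded rescaling of $\epsilon$ (harmless at the level of the trichotomy). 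Carrying this comparison out completes the proof.
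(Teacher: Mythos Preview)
Your handling of (A), (C), and the upper bound in (B) matches the paper exactly: Theorem~\ref{theorem.Icondn} for $Y$, then Theorem~\ref{theorem.XYasy} to pass to $X$.

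You are right that the \emph{statement} of Theorem~\ref{theorem.XYasy}(B) does not literally deliver the strictly positive lower bound on $\limsup\|X\|$. However, the \emph{proof} of that part does: it shows (via the identity \eqref{eq.YX}) that $\limsup_{t\to\infty}\|Y(t)\|\geq c_1$ forces $\limsup_{t\to\infty}\|X(t)\|\geq c_1/(1+\|I+A\|_2)$. So the lower $\limsup$ bound in (B) comes for free from material already established, without any further equivalence.

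For \eqref{eq.IcondnXbound}, the paper does \emph{not} transfer $\liminf\|Y\|=0$ or the time--average limit from $Y$ to $X$ through Theorem~\ref{theorem.XYasy}; indeed that transfer would fail, since $z=X-Y$ is merely bounded in case (B). Instead the paper proves \eqref{eq.IcondnXbound} directly for $X$ in a separate section by a Lyapunov argument: with $M$ positive definite solving $A^TM+MA=-I_d$ and $V(x)=x^TMx$, It\^o's formula on $V(X(t))$ and the strong law for the martingale part yield $t^{-1}\int_0^t\|X(s)\|^2\,ds\to 0$ a.s., whence $\liminf\|X\|=0$. The only input from the hypothesis is that $I_c(\epsilon)<+\infty$ for some $\epsilon$ forces $\int_{n}^{n+1}\|\sigma(s)\|_F^2\,ds\to 0$, hence \eqref{eq.cesarosigF}. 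Your route via Theorem~\ref{theorem.liniffsigma} would work too, but only because that theorem's \eqref{eq.IcondnXbound} is proven by precisely this Lyapunov argument; you are not avoiding it.

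Finally, the ``main obstacle'' you set yourself --- a direct combinatorial comparison of $I_c$ with $S_h'$ --- is unnecessary. Once Theorems~\ref{theorem.Yclassify3} and \ref{theorem.Icondn} are both in hand, the trichotomies for $S_h'$ and $I_c$ are automatically equivalent because each classifies the \emph{same} process $Y$ into three mutually exclusive a.s.\ events; this is exactly the logic of Proposition~\ref{prop.ShS1}. The paper never performs the window--by--window estimate you sketch (its Lemma~\ref{lemma.Iepssum} connects $I_c$ to the general--sequence sum $S_{t_\cdot}$ of Theorem~\ref{theorem.Yclassify2}, not to $S_h'$ directly). Your plan would succeed, but it is substantially more work than the paper does.
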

We can prove in a manner analogous to that used to establish Proposition~\ref{prop.ShS1} that we can take $c=1$ without loss of generality in
\eqref{def.Ieps} and \eqref{def.varsigc}. It is therefore enough to consider the finiteness of $I_1(\epsilon)$ in order to determine the asymptotic behaviour.

If we impose the fading noise condition $\varsigma_c(t)\to 0$ as $t\to\infty$ (which is equivalent to \eqref{eq.sighto0}), we can demonstrate in a manner analogous to the proof of  Theorem~\ref{theorem.liniffsigmafadeS} that $\liminf_{t\to\infty} \|X(t,\xi)\|=0$ a.s. in  the case when $I_c(\epsilon)=+\infty$ for every $\epsilon>0$.
\begin{theorem} \label{theorem.IcondnXliminf}
Suppose that $\sigma$ obeys \eqref{eq.sigmacns} and that $X$ is the
unique continuous adapted process which obeys
\eqref{eq.stochlinode}. Suppose all the eigenvalues of $A$ have
negative real parts. Let $c>0$, $I_c(\cdot)$ be defined by
\eqref{def.Ieps}, and $\varsigma_c$ by \eqref{def.varsigc}. Suppose finally that $\varsigma_c(t)\to 0$ as $t\to\infty$.
\begin{itemize}
\item[(A)] If $I_c$ obeys \eqref{eq.Istable}, then
$\lim_{t\to\infty} X(t,\xi)=0$, a.s.
\item[(B)] If $I_c$ obeys \eqref{eq.Ibounded},
then there exist deterministic $0<c_1\leq c_2<+\infty$ independent of $\xi$ such that
\begin{equation*} 
c_1\leq \limsup_{t\to\infty} \|X(t,\xi)\|\leq c_2, \quad \text{a.s.}
\end{equation*}
Moreover $X$ obeys \eqref{eq.IcondnXbound}.
\item[(C)] If $I_c$ obeys \eqref{eq.Iunstable} then
$\limsup_{t\to\infty} \|X(t,\xi)\|=+\infty$ a.s.
Moreover $X$ obeys \eqref{eq.IcondnXbound}.
\end{itemize}
\end{theorem}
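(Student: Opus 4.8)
The plan is to derive Theorem~\ref{theorem.IcondnXliminf} from the already-established results rather than prove anything from scratch. Parts (A), (B) and the $\limsup$ assertion of part (C) are immediate: by Theorem~\ref{theorem.IcondnX} the finiteness behaviour of $I_c(\epsilon)$ already classifies $X$ into the three regimes, and the extra hypothesis $\varsigma_c(t)\to 0$ does not affect those conclusions. So the only genuinely new content is the claim that in case (C)---when $I_c(\epsilon)=+\infty$ for all $\epsilon>0$ \emph{and} $\varsigma_c(t)\to 0$---the solution $X$ additionally obeys \eqref{eq.IcondnXbound}, i.e. $\liminf_{t\to\infty}\|X(t,\xi)\|=0$ and $\frac1t\int_0^t\|X(s,\xi)\|^2\,ds\to 0$ a.s.

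First I would record the translation between the integral condition $I_c$ and the summation condition $S_h'$. The fading hypothesis $\varsigma_c(t)\to 0$ is precisely \eqref{eq.sighto0} (with, say, $h=c$), as already noted in the text, and by the equivalence of $I_c(\epsilon)$ being infinite for all $\epsilon$ with $S_h'(\epsilon)$ being infinite for all $\epsilon$ (the analogue of Proposition~\ref{prop.SHSHpr}/\ref{prop.ShS1}, referred to in the paragraph preceding the theorem), case (C) of the present theorem falls exactly under the hypotheses of case (C) of Theorem~\ref{theorem.liniffsigmafadeS}. That theorem already asserts that under \eqref{eq.sighto0} and \eqref{eq.thetaunstableh}, $X$ obeys \eqref{eq.IcondnXbound}. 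Hence the proof is simply: verify the hypotheses match, then quote Theorem~\ref{theorem.liniffsigmafadeS}(C). I would write this out carefully, spelling out that $\varsigma_c(t)^2=\int_t^{t+c}\|\sigma(s)\|_F^2\,ds$, that monotonicity in the defining integrand lets us pass between the ``continuous'' fading condition and the ``discrete'' one \eqref{eq.sighto0}, and that by Proposition~\ref{prop.meansquare} all these fading formulations coincide, so no loss of generality is incurred in the choice of $h$ versus $c$.

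For completeness I would also indicate the mechanism behind \eqref{eq.IcondnXbound} in case (C), since the reader may want to see why fading noise forces recurrence at zero even when the solution is a.s. unbounded. The ergodic statement $\frac1t\int_0^t\|X(s)\|^2\,ds\to 0$ comes from an Itô/energy identity for $d\|X(t)\|^2$: using $dX=AX\,dt+\sigma\,dB$ and the negative-definiteness of the symmetric part of $A$ (after a similarity transform making $A$ dissipative, available since all eigenvalues have negative real part), one gets $\|X(t)\|^2 + \kappa\int_0^t\|X(s)\|^2\,ds \le \|\xi\|^2 + \int_0^t \|\sigma(s)\|_F^2\,ds + M(t)$ for a martingale $M$ with quadratic variation controlled by $\int_0^t\|\sigma(s)\|_F^2\|X(s)\|^2\,ds$; dividing by $t$, using $\frac1t\int_0^t\|\sigma\|_F^2\,ds\to 0$ (a Cesàro consequence of \eqref{eq.sighto0}) and the strong law for martingales yields the average. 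Then $\liminf_{t\to\infty}\|X(t)\|=0$ follows because a positive $\liminf$ would contradict the vanishing pathwise average. However, since all of this is already packaged inside Theorem~\ref{theorem.liniffsigmafadeS}, in the paper itself I would keep the proof to the two-line reduction and refer back, rather than reproduce the energy estimate.

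The main obstacle is essentially bookkeeping rather than analysis: one must be sure that the equivalence ``$I_c(\epsilon)=+\infty$ for all $\epsilon$'' $\Longleftrightarrow$ ``$S_h'(\epsilon)=+\infty$ for all $\epsilon$'' has actually been established (the text asserts it ``in a manner analogous'' to Proposition~\ref{prop.SHSHpr} and Proposition~\ref{prop.ShS1} but does not state it as a numbered result), and similarly that ``$\varsigma_c(t)\to 0$'' $\Longleftrightarrow$ \eqref{eq.sighto0}. If these are taken as granted, the proof collapses; if not, I would first insert a short lemma---an integral-to-sum comparison using monotonicity of $x\mapsto x^2 e^{-\epsilon^2/2x^2}$ and the fact that $\varsigma_c$ varies slowly (being an average of $\|\sigma\|_F^2$ over a window of fixed length $c$)---bounding $I_c(\epsilon)$ above and below by constant multiples of tails of $S_h'$ at comparable values of $\epsilon$, exactly as in the proof that $h$ (resp. $c$) is immaterial. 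With that lemma in hand, Theorem~\ref{theorem.IcondnXliminf} is a corollary of Theorem~\ref{theorem.liniffsigmafadeS} and Theorem~\ref{theorem.IcondnX}.
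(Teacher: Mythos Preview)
Your reduction strategy is correct and matches what the paper does: parts (A), (B) and the $\limsup$ in (C) come straight from Theorem~\ref{theorem.IcondnX}, and for the extra claim \eqref{eq.IcondnXbound} in (C) one passes from the integral hypotheses $(I_c(\epsilon)=+\infty$ for all $\epsilon$, $\varsigma_c(t)\to0)$ to the summation hypotheses $(S_h'(\epsilon)=+\infty$ for all $\epsilon$, \eqref{eq.sighto0}$)$ and invokes Theorem~\ref{theorem.liniffsigmafadeS}(C). The needed equivalences are all in the paper (Proposition~\ref{prop.meansquare} for the fading condition, Proposition~\ref{prop.normequiv} and Lemma~\ref{lemma.Iepssum} for $I_c\leftrightarrow S_h'$), so your ``bookkeeping'' concern is already handled.

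Your informal ``mechanism'' paragraph, however, is the argument for case (B), not (C), and would fail as written. When $\limsup_{t\to\infty}\|X(t)\|=+\infty$ the quadratic variation $\langle M\rangle(t)\asymp\int_0^t\|\sigma(s)\|_F^2\|X(s)\|^2\,ds$ need not be $o(t)$, so the strong law for martingales does not directly give $M(t)/t\to0$; the argument is circular because the bound on $\langle M\rangle$ already requires control of $\int_0^t\|X(s)\|^2\,ds$. The paper's proof of Theorem~\ref{theorem.liniffsigmafadeS}(C) breaks this circularity by a preliminary step you omit: using the law of the iterated logarithm componentwise on $Y$ together with \eqref{eq.sighto0} to obtain $\|Y(t)\|^2/\log t\to0$ a.s., transferring this to $\|X(t)\|^2/\log t\to0$ via the variation-of-constants relation between $X$ and $Y$, and only then estimating $\langle K\rangle(t)=o(t\log t)$ and applying the LIL (not the SLLN) for $K$ to get $K(t)/t\to0$. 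Since you explicitly say you would not reproduce the mechanism and would just quote Theorem~\ref{theorem.liniffsigmafadeS}(C), your proof is fine; but if you do include a sketch, it should reflect this extra a~priori growth bound.
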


The result of Theorem~\ref{theorem.IcondnX} shows that $\liminf_{t\to\infty}
\|X(t)\|=0$ a.s. when $I_1(\epsilon)$ is finite for some $\epsilon>0$
and infinite for others. In Theorem~\ref{theorem.IcondnXliminf} we strengthened the condition on the smallness of the noise
coefficient, enabling us to prove that when $I_1(\epsilon)=+\infty$ for
every $\epsilon>0$, we have $\limsup_{t\to\infty} \|X(t)\|=+\infty$ and $\liminf_{t\to\infty} \|X(t)\|=0$ a.s.
We now give an example which shows that this conclusion cannot be extended if the diffusion coefficient grows in intensity as $t\to\infty$,
and that therefore part (C) of Theorem~\ref{theorem.IcondnX} is the most general conclusion that can be drawn without imposing more specific
growth conditions on the diffusion coefficient.
\begin{example} \label{examp.liminf0fininfty}
Suppose that $d=r\geq 3$, that $A=-I_d$ and that
$\sigma(t)=\eta(t)I_d$ for $t\geq 0$, where $\eta\in
C([0,\infty);(0,\infty))$. Suppose also that
\[
\lim_{t\to\infty} \int_0^t e^{2s}\eta^2(s)\,ds=+\infty.
\]
Then the $i$--th component of $X$ obeys
\[
X_i(t)=\xi_i e^{-t}+e^{-t}\int_0^t e^s\eta(s)\,dB_i(s), \quad t\geq
0.
\]
Hence
\[
e^{2t} \|X(t)\|_2^2=\|\xi\|^2_2+\sum_{i=1}^d \left(\int_0^t
e^s\eta(s)\,dB_i(s)\right)^2, \quad t\geq 0.
\]
Define
\[
T(t):=\int_0^t e^{2s}\eta^2(s)\,ds, \quad t\geq 0.
\]
Then $T:[0,\infty)\to [0,\infty)$ is an increasing and $C^1$
function with $T(t)\to\infty$ as $t\to\infty$. Define
$\tau(t)=T^{-1}(t)$ for $t\geq 0$ and
\[
U(t)=\|\xi\|^2_2+\sum_{i=1}^d \left(\int_0^t
e^s\eta(s)\,dB_i(s)\right)^2, \quad t\geq 0.
\]
Also define $\tilde{U}(t)=U(\tau(t))$ and
\[
B_i^\ast(t)=\int_0^{\tau(t)} e^s\eta(s)\,dB_i(s), \quad t\geq 0.
\]
Let $\mathcal{G}(t)=\mathcal{F}^B(\tau(t))$. Then $\tilde{U}$ and
$B_i^\ast$ are $\mathcal{G}$--adapted and
\[
\tilde{U}(t)=\|\xi\|^2_2+\sum_{i=1}^d B_i^\ast(t)^2, \quad t\geq 0.
\]
We now establish that $B_i^\ast$ is a $\mathcal{G}$ standard
Brownian motion. To do this we must check the conditions of L\'evy's
theorem for characterising standard Brownian motion. First, we see
that $B_i^\ast$ is $\mathcal{F}^B(\tau(t))$ measurable, and
therefore $\mathcal{G}(t)$ measurable. Since $\tau$ is increasing,
$\mathcal{G}$ is a filtration. Also because $\tau$ is continuous and
$s\mapsto e^s\eta(s)$ is continuous, then $t\mapsto B_i^\ast(t)$ is
continuous. Finally, if we let $I_i(t)=\int_0^t e^s\eta(s)dB_i(s)$,
then $\mathbb{E}[I_i(t)^2]=\int_0^t e^{2s}\eta(s)^2ds=T(t)$. Thus
\[
\mathbb{E}[B_i^\ast(t)^2]=\mathbb{E}[I_i(\tau(t))^2]=T(\tau(t))=t<+\infty.
\]
Therefore, we need only to check that $B_i^\ast$ obeys the
projection property for martingales. Let $t>s\geq0$. Then as $\tau$
is increasing, we have
\begin{align*}
\mathbb{E}[B_i^\ast(t)|\mathcal{G}(s)]&=\mathbb{E}[I_i(\tau(t))|\mathcal{F}^B(\tau(s))]\\
&=\mathbb{E}[\int_{\tau(s)}^{\tau(t)}e^u\eta(u)dB_i(u)+B_i^\ast(s)|\mathcal{F}^B(\tau(s))]\\
&=\mathbb{E}[\int_{\tau(s)}^{\tau(t)}e^u\eta(u)dB_i(u)|\mathcal{F}^B(\tau(s))]+B_i^\ast(s)\\
&=\mathbb{E}[\int_{\tau(s)}^{\tau(t)}e^u\eta(u)dB_i(u)]+B_i^\ast(s)=B_i^\ast(s).
\end{align*}
Hence $B^\ast_i$ is a $\mathcal{G}(t)$--martingale. Finally,
$\langle
B_i^\ast\rangle(t)=\int_0^{\tau(t)}e^{2s}\eta(s)^2ds=T(\tau(t))=t$.
Therefore, by L\'evy's characterisation theorem, $B_i^\ast$ is a
$\mathcal{G}$ standard Brownian motion.
Also, because the Brownian motions $B_1, \ldots, B_d$ are
independent, it follows that $B_1^\ast, B_2^\ast, \ldots, B_d^\ast$
are independent $\mathcal{G}$--adapted standard Brownian motions.
Therefore $\tilde{U}$ is a $d$--dimensional square Bessel process
starting at $\|\xi\|^2_2$, and indeed
\[
e^{2\tau(t)}\|X(\tau(t))\|^2_2=\tilde{U}(t), \quad t\geq 0.
\]
Thus, $\tilde{U}_2=\sqrt{\tilde{U}}$ is a $d$--dimensional Bessel
process starting at $\|\xi\|_2$.

Now, if $\xi\neq 0$, it was proven in Appleby and
Wu~\cite{ApplebyWu:2009} that
\[
\liminf_{t\to\infty} \frac{\log
\frac{\tilde{U}_2(t)}{\sqrt{t}}}{\log\log t}=-\frac{1}{d-2}, \quad
\limsup_{t\to\infty} \frac{\tilde{U}_2(t)}{\sqrt{2t\log\log t}}=1,
\quad\text{a.s.}
\]
Hence
\[
\liminf_{t\to\infty} \frac{\log
\frac{e^{\tau(t)}\|X(\tau(t))\|_2}{\sqrt{t}}}{\log\log
t}=-\frac{1}{d-2}, \quad \text{a.s.}
\]
which yields
\begin{equation}\label{eq.liminfXbesselexample}
\liminf_{t\to\infty} \frac{\log
\frac{\|X(t)\|_2}{\sqrt{e^{-2t}T(t)}}}{\log\log
T(t)}=-\frac{1}{d-2}, \quad \limsup_{t\to\infty}
\frac{\|X(t)\|_2}{\sqrt{2e^{-2t}T(t)\log\log T(t)}}=1,
\quad\text{a.s.}
\end{equation}
If we suppose that $\eta$ is such that $\eta'(t)/\eta(t)\to 0$ as
$t\to\infty$, so that $\eta$ neither decays nor grows at an
exponential rate, we have by l'H\^opital's rule that
\[
\lim_{t\to\infty} \frac{T(t)}{e^{2t}\eta(t)^2}=\frac{1}{2},
\]
and because $\lim_{t\to\infty} \log\eta(t)/t=0$, we have also that
\[
\lim_{t\to\infty} \frac{\log\log T(t)}{\log t}=1.
\]
Therefore, from \eqref{eq.liminfXbesselexample} we get
\[
\liminf_{t\to\infty} \frac{\log \frac{\|X(t)\|_2}{\frac{1}{\sqrt{2}}
\eta(t)}}{\log t}=-\frac{1}{d-2}, \quad \limsup_{t\to\infty}
\frac{\|X(t)\|_2}{\sqrt{\eta^2(t)\log t }}=1, \quad\text{a.s.}
\]

Now, we suppose that $\eta(t)/t^\alpha\to L\in (0,\infty)$ as
$t\to\infty$. If $\alpha\geq 0$, we can show that all the hypotheses
hold and that $I_1(\epsilon)=+\infty$ for all $\epsilon>0$. Moreover,
if $\alpha>1/(d-2)>0$, then
\[
\lim_{t\to\infty} \|X(t)\|_2=+\infty, \quad \text{a.s.}
\]
while if $0\leq \alpha<1/(d-2)$, we have
\[
\liminf_{t\to\infty} \|X(t)\|_2= 0, \quad \limsup_{t\to\infty}
\|X(t)\|_2=+\infty, \quad \text{a.s.}
\]
(In the case $\alpha<0$, we have that $X(t)\to 0$ as $t\to\infty$
a.s. because $I_1(\epsilon)$ is finite for all $\epsilon>0$.)

Therefore, it can be seen that without further information on the
growth or decay rate of $\|\sigma(t)\|$ as $t\to\infty$, it is
impossible to make a general conclusion about the size of
$\liminf_{t\to\infty} \|X(t)\|$. In this sense, the overall
conclusions of Theorem~\ref{theorem.Icondn}  cannot be improved upon
if $d\geq 3$ without further analysis.

However, in the case when $d=1$ (and one can take $r=1$
without loss of generality), we can show that $\liminf_{t\to\infty} |X(t)|=0$ a.s.
Suppose that $I_1(\epsilon)=+\infty$ for all $\epsilon>0$. Then $\limsup_{t\to\infty} |X(t)|=+\infty$.
By Theorem~\ref{theorem.XYasy}, it follows that $\limsup_{t\to\infty} |Y(t)|=+\infty$ a.s. Then we know
that $S_1(\epsilon)=+\infty$ for all $\epsilon>0$. Hence $\sigma^2\not\in L^1(0,\infty)$. By mimicking a proof of
a result in Appleby, Cheng and Rodkina~\cite{JAJCAR:2011dresden}, it follows that we must have $\liminf_{t\to\infty} |X(t)|=0$ a.s.

%
\end{example}

We now present a result concerning the inability of noise to
stabilise the asymptotically stable differential equation
$x'(t)=Ax(t)$.
\begin{theorem} \label{theorem.nonstabilise}
Suppose that $\sigma$ obeys \eqref{eq.sigmacns} and that
$X(\cdot,\xi)$ is the unique continuous adapted process which obeys
\eqref{eq.stochlinode} with initial condition $X(0)=\xi$. Then the
following are equivalent:
\begin{itemize}
\item[(A)] All the eigenvalues of $A$ have negative real
parts, and $I$ defined by \eqref{def.Ieps}  obeys
\eqref{eq.Istable};
\item[(B)] There is a basis $(\xi_i)_{i=1}^d$ of $\mathbb{R}^d$ and an event $C$ with $\mathbb{P}[C]>0$ given by
\[
C=\{\omega: \lim_{t\to\infty} X(t,\xi_i,\omega)=0, \text{ for
$i=1,\ldots,d$, } \lim_{t\to\infty} X(t,0,\omega)=0\};
\]
\item[(C)] For each $\xi\in \mathbb{R}^d$ we have $\lim_{t\to\infty} X(t,\xi)=0$ a.s.
\end{itemize}
\end{theorem}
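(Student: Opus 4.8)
The plan is to establish the cycle of implications $\mathrm{(A)}\Rightarrow\mathrm{(C)}\Rightarrow\mathrm{(B)}\Rightarrow\mathrm{(A)}$. The first implication is immediate: hypothesis (A) is precisely the hypothesis of Theorem~\ref{theorem.IcondnX}, part (A) of which yields $\lim_{t\to\infty} X(t,\xi)=0$ a.s.\ for each $\xi\in\mathbb{R}^d$, which is (C). The implication $\mathrm{(C)}\Rightarrow\mathrm{(B)}$ is trivial: choosing any basis $(\xi_i)_{i=1}^d$ of $\mathbb{R}^d$, each of the events $\{\lim_{t\to\infty} X(t,\xi_i)=0\}$, $i=1,\ldots,d$, and $\{\lim_{t\to\infty} X(t,0)=0\}$ has probability one by (C), so their (finite) intersection $C$ has probability one, and in particular $\mathbb{P}[C]>0$.

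The content of the theorem is in the implication $\mathrm{(B)}\Rightarrow\mathrm{(A)}$. The key observation is that, by linearity of \eqref{eq.stochlinode} and uniqueness of solutions, for each fixed $\xi$ the difference of the solution started at $\xi$ and the solution started at $0$ is the purely deterministic function $e^{At}\xi$: indeed $Z(t):=X(t,\xi)-X(t,0)-e^{At}\xi$ solves $Z'(t)=AZ(t)$ with $Z(0)=0$, hence $Z\equiv 0$ a.s. Therefore, on the event $C$ we have, for each $i=1,\ldots,d$, that $e^{At}\xi_i=X(t,\xi_i)-X(t,0)\to 0$ as $t\to\infty$. Since $e^{At}\xi_i$ is non-random and this convergence holds on a set of positive probability, it holds outright; and since $(\xi_i)_{i=1}^d$ is a \emph{basis}, linearity gives $e^{At}v\to 0$ as $t\to\infty$ for every $v\in\mathbb{R}^d$, i.e.\ $\|e^{At}\|\to 0$. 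By the standard theory of linear autonomous systems (the Jordan canonical form), this forces every eigenvalue of $A$ to have negative real part. This is exactly where the clause $\{\lim_{t\to\infty} X(t,0)=0\}$ in the definition of $C$ is needed: the pairwise differences $\xi_i-\xi_j$ need not span $\mathbb{R}^d$.

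It remains, still under (B), to show that $I_c$ obeys \eqref{eq.Istable}. From $\mathbb{P}[C]>0$ we get $\mathbb{P}[\lim_{t\to\infty} X(t,0)=0]>0$. Now $X(\cdot,0)$ is the solution of \eqref{eq.stochlinode} with $\xi=0$, and we have just shown that all eigenvalues of $A$ have negative real parts, so Theorem~\ref{theorem.IcondnX} applies to it. Since $\epsilon\mapsto I_c(\epsilon)$ is monotone, exactly one of \eqref{eq.Istable}, \eqref{eq.Ibounded}, \eqref{eq.Iunstable} holds. If \eqref{eq.Ibounded} held, part (B) of Theorem~\ref{theorem.IcondnX} would give $\limsup_{t\to\infty}\|X(t,0)\|\geq c_1>0$ a.s., so $\{\lim_{t\to\infty} X(t,0)=0\}$ would be a null set, a contradiction; if \eqref{eq.Iunstable} held, part (C) would give $\limsup_{t\to\infty}\|X(t,0)\|=+\infty$ a.s., again a contradiction. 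Hence \eqref{eq.Istable} holds, and together with the spectral condition on $A$ this is exactly statement (A), completing the cycle.

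The only genuinely non-routine step is the passage from ``$X(\cdot,0)\to 0$ with positive probability'' to ``$X(\cdot,0)\to 0$ almost surely'' inside $\mathrm{(B)}\Rightarrow\mathrm{(A)}$. This is not obtained from a zero--one law but is built into the trichotomy of Theorem~\ref{theorem.IcondnX}: in each of the three mutually exclusive regimes for $I_c$, the corresponding asymptotic behaviour of $\|X(t,0)\|$ holds a.s., so witnessing convergence to zero with positive probability rules out the two non-convergent regimes. Everything else --- the deterministic splitting, the elementary linear algebra characterising $\|e^{At}\|\to 0$, and the bookkeeping with the basis --- is standard.
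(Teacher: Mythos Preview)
Your proof is correct and follows the same cycle $\mathrm{(A)}\Rightarrow\mathrm{(C)}\Rightarrow\mathrm{(B)}\Rightarrow\mathrm{(A)}$ as the paper, with the same key idea that differences of solutions from different initial conditions solve the deterministic equation $x'=Ax$, forcing $e^{At}\to 0$ once a basis's worth of such differences converge. The only cosmetic variations are that the paper takes telescoping differences $X(t,\xi_i)-X(t,\xi_{i-1})$ (with $\xi_0=0$) rather than your $X(t,\xi_i)-X(t,0)$, and that for the noise condition the paper passes through $Y$ (expressing $Y$ in terms of $X$ by variation of constants and invoking the trichotomy of Theorem~\ref{theorem.Icondn}) whereas you apply the trichotomy of Theorem~\ref{theorem.IcondnX} directly to $X(\cdot,0)$; both routes are equivalent.
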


This section closes with one further remark. The classification of the asymptotic behaviour of \eqref{eq.stochlinode} is achieved by means of summability or
equivalent integrability conditions which are written in terms of the Frobenius norm of $\sigma$. However, by norm equivalence, it can be shown that \text{any}
norm on $\mathbb{R}^{d\times r}$ can be used in place of the Frobenius norm. More precisely, the following holds.
\begin{proposition} \label{prop.normequiv}
Let $\|\cdot\|$ be any norm on $\mathbb{R}^{d\times r}$, and define
\begin{gather*}
J_1(\epsilon)=\int_0^\infty \sqrt{\int_t^{t+1}\|\sigma(s)\|^2\,ds} \exp\left(-\frac{\epsilon^2}{2\int_t^{t+1}\|\sigma(s)\|^2\,ds}\right)\,dt, \\
T_1'(\epsilon)=\sum_{n=1}^\infty \left\{\sqrt{\int_n^{n+1}\|\sigma(s)\|^2\,ds} \exp\left(-\frac{\epsilon^2}{2\int_n^{n+1}\|\sigma(s)\|^2\,ds}\right) \right\}.
\end{gather*}
Let $S_1'$ be defined by \eqref{def.Shpr} and $I_1$ be defined by \eqref{def.Ieps} and \eqref{def.varsigc}.
\begin{itemize}
\item[(A)] The following statements are equivalent:
\begin{enumerate}
\item[(i)] $S_1'(\epsilon)<+\infty$ for all $\epsilon>0$;
\item[(ii)] $T_1'(\epsilon)<+\infty$ for all $\epsilon>0$;
\item[(iii)] $I_1(\epsilon)<+\infty$ for all $\epsilon>0$;
\item[(iv)] $J_1(\epsilon)<+\infty$ for all $\epsilon>0$.
\end{enumerate}
\item[(ii)] The following statements are equivalent:
\begin{enumerate}
\item[(i)] There exists $\epsilon_1>0$ such that $S_1'(\epsilon)<+\infty$ for all $\epsilon>\epsilon_1$ and $S_1'(\epsilon)=+\infty$ for all $\epsilon<\epsilon_1$;
\item[(ii)] There exists $\epsilon_2>0$ such that $T_1'(\epsilon)<+\infty$ for all $\epsilon>\epsilon_2$ and $T_1'(\epsilon)=+\infty$ for all $\epsilon<\epsilon_2$;
\item[(iii)] There exists $\epsilon_3>0$ such that $I_1(\epsilon)<+\infty$ for all $\epsilon>\epsilon_3$ and $I_1(\epsilon)=+\infty$ for all
$\epsilon<\epsilon_3$;
\item[(iv)] There exists $\epsilon_4>0$ such that $J_1(\epsilon)<+\infty$ for all $\epsilon>\epsilon_4$ and $J_1(\epsilon)=+\infty$ for all
$\epsilon<\epsilon_4$;
\end{enumerate}
\item[(iii)]  The following statements are equivalent:
\begin{enumerate}
\item[(i)] $S_1'(\epsilon)=+\infty$ for all $\epsilon>0$;
\item[(ii)] $T_1'(\epsilon)=+\infty$ for all $\epsilon>0$;
\item[(iii)] $I_1(\epsilon)=+\infty$ for all $\epsilon>0$;
\item[(iv)] $J_1(\epsilon)=+\infty$ for all $\epsilon>0$.
\end{enumerate}
\end{itemize}
\end{proposition}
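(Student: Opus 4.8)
The plan is to derive the three sets of equivalences from two ingredients. The first is the equivalence of all norms on the finite-dimensional space $\mathbb{R}^{d\times r}$, which will let us pass between the Frobenius-norm quantities $S_1',I_1$ and their arbitrary-norm counterparts $T_1',J_1$. The second is the classification given by Theorems~\ref{theorem.Yclassify3} and~\ref{theorem.Icondn}, which will let us pass between $S_1'$ and $I_1$ via the asymptotic behaviour of $Y$, avoiding any direct comparison of a sum with an integral.

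First I would fix an arbitrary norm $\|\cdot\|$ on $\mathbb{R}^{d\times r}$ and choose $0<m\le M<+\infty$ with $m\|A\|_F\le\|A\|\le M\|A\|_F$ for every $A$. Writing $\theta_F(t)^2:=\int_t^{t+1}\|\sigma(s)\|_F^2\,ds$ and $\theta(t)^2:=\int_t^{t+1}\|\sigma(s)\|^2\,ds$, this gives $m^2\theta_F(t)^2\le\theta(t)^2\le M^2\theta_F(t)^2$ for all $t\ge0$, so in particular $\theta(t)=0$ exactly when $\theta_F(t)=0$ and the indicator factors in $I_1$ and $J_1$ agree. Next I would record the two elementary properties of $\phi_\epsilon(x):=\sqrt x\,e^{-\epsilon^2/(2x)}$ for $x>0$, $\phi_\epsilon(0):=0$: it is continuous and strictly increasing on $[0,\infty)$ (since $(\ln\phi_\epsilon)'(x)=\tfrac1{2x}+\tfrac{\epsilon^2}{2x^2}>0$), and it obeys the scaling identity $\phi_\epsilon(cx)=\sqrt c\,\phi_{\epsilon/\sqrt c}(x)$ for $c>0$. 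Combining monotonicity (applied to the bounds on $\theta(t)^2$) with the scaling identity yields the pointwise estimate
\[
m\,\phi_{\epsilon/m}\bigl(\theta_F(t)^2\bigr)\;\le\;\phi_\epsilon\bigl(\theta(t)^2\bigr)\;\le\;M\,\phi_{\epsilon/M}\bigl(\theta_F(t)^2\bigr),\qquad t\ge0,\ \epsilon>0.
\]
Summing over $n\ge1$ and integrating over $t\in[0,\infty)$ (valid since all integrands are nonnegative) gives
\[
m\,S_1'(\epsilon/m)\le T_1'(\epsilon)\le M\,S_1'(\epsilon/M),\qquad m\,I_1(\epsilon/m)\le J_1(\epsilon)\le M\,I_1(\epsilon/M),\qquad\epsilon>0.
\]

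From these inequalities one reads off at once that $S_1'(\epsilon)<+\infty$ for all $\epsilon>0$ if and only if $T_1'(\epsilon)<+\infty$ for all $\epsilon>0$, and that $S_1'(\epsilon)=+\infty$ for all $\epsilon>0$ if and only if $T_1'(\epsilon)=+\infty$ for all $\epsilon>0$; the same holds with $(S_1',T_1')$ replaced by $(I_1,J_1)$. Since each of $\epsilon\mapsto S_1'(\epsilon),T_1'(\epsilon),I_1(\epsilon),J_1(\epsilon)$ is a non-increasing $[0,+\infty]$-valued function, each lies in exactly one of the three regimes ``finite for all $\epsilon$'', ``nontrivial threshold $\epsilon_*>0$'', ``infinite for all $\epsilon$'', so the threshold equivalences of part (ii) of the Proposition follow automatically from the equivalences just proved, exactly as in the proof of Proposition~\ref{prop.ShS1}. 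It remains to link $S_1'$ and $I_1$. By Theorem~\ref{theorem.Yclassify3} with $h=1$, the regime of $S_1'$ determines (a.s.) which of $\lim_{t\to\infty}Y(t)=0$, $0<\limsup_{t\to\infty}\|Y(t)\|<+\infty$, or $\limsup_{t\to\infty}\|Y(t)\|=+\infty$ holds; by Theorem~\ref{theorem.Icondn} with $c=1$, the regime of $I_1$ determines the same trichotomy. As these three behaviours of $Y$ are mutually exclusive and, for any fixed $\sigma$, exactly one of them holds almost surely, $S_1'$ and $I_1$ must lie in the same regime. Combining the equivalences between $S_1'$ and $T_1'$, between $I_1$ and $J_1$, and between $S_1'$ and $I_1$ then gives all four conditions equivalent in each of regimes (i), (ii), (iii), as asserted.

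The only step that goes beyond routine norm-equivalence bookkeeping is the identification of the regimes of $S_1'$ and $I_1$; I would handle it as above, via the already-proven classifications, rather than by a direct comparison of the sum $\sum_n\phi_\epsilon(\theta_F(n)^2)$ with the integral $\int_0^\infty\phi_\epsilon(\theta_F(t)^2)\,dt$, which is awkward because $t\mapsto\phi_\epsilon(\theta_F(t)^2)$ need not be monotone. Within the bookkeeping, the one point that needs care is that rescaling the argument of $\phi_\epsilon$ by a constant $c$ rescales $\epsilon$ by $1/\sqrt c$ \emph{and} introduces a multiplicative factor $\sqrt c$; one checks this factor is harmless since it is bounded above and below by positive constants, and so affects neither finiteness for all $\epsilon$ nor the location of a threshold up to a bounded factor.
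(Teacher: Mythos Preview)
Your argument is correct. The paper does not give a proof of this proposition at all: it simply states ``The proof is straightforward and hence omitted.'' Your write-up supplies exactly the kind of straightforward argument the authors presumably had in mind---norm equivalence plus the monotonicity and scaling of $\phi_\epsilon(x)=\sqrt{x}\,e^{-\epsilon^2/(2x)}$ to pass between $(S_1',T_1')$ and between $(I_1,J_1)$, and then the classification Theorems~\ref{theorem.Yclassify3} and~\ref{theorem.Icondn} (via the trichotomy for $Y$) to link $S_1'$ with $I_1$, in the same spirit as the paper's own proof of Proposition~\ref{prop.ShS1}.
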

The proof is straightforward and hence omitted.

\section{Sufficient conditions on $\sigma$ for asymptotic classification}
Although the summability conditions on \eqref{def.Sh} classify  necessary and
sufficient, it can be quite difficult to check in practice. We
supply more easily--checked sufficient conditions on $\sigma$ for which
the solution of \eqref{eq.stochlinode} converges to zero, is bounded or is
unbounded.

It is well--known in the case that all eigenvalues of $A$ have negative real parts that the solution of \eqref{eq.stochlinode} is a.s. asymptotically stable
in the case that $\sigma\in L^2([0,\infty);\mathbb{R}^{d\times r})$. We can see that this fact is a simple corollary of parts (A) of Theorem~\ref{theorem.XYasy},
Theorem~\ref{theorem.Yclassify3} and the following observation.
\begin{proposition} \label{prop.siginL2}
If $\sigma\in L^2([0,\infty);\mathbb{R}^{d\times r})$, then $S_1'(\epsilon)<+\infty$ for every $\epsilon>0$.
\end{proposition}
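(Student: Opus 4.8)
The plan is to reduce the claim to the convergence of the series $\sum_n \theta(n)^2$, where $\theta(n)^2:=\int_n^{n+1}\|\sigma(s)\|_F^2\,ds$, by means of a single elementary inequality. First I would record that, since $\sigma\in L^2([0,\infty);\mathbb{R}^{d\times r})$,
\[
\sum_{n=1}^\infty \theta(n)^2=\sum_{n=1}^\infty \int_n^{n+1}\|\sigma(s)\|_F^2\,ds\leq \int_0^\infty\|\sigma(s)\|_F^2\,ds<+\infty,
\]
so in particular $\theta(n)^2\to 0$ as $n\to\infty$, and hence there is $N\in\mathbb{N}$ with $\theta(n)\leq 1$ for all $n\geq N$. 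Any block $[n,n+1)$ on which $\theta(n)=0$ contributes a term equal to $0$ to $S_1'(\epsilon)$ (consistently with the convention $1-\Phi(\infty)=0$ used in the definition of $S_h$), so we may and do restrict attention to indices $n$ with $\theta(n)>0$.

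Next I would apply the elementary bound $e^{-x}\leq 1/x$, valid for all $x>0$, with $x=\epsilon^2/(2\theta(n)^2)$, to obtain, for every $n$ with $\theta(n)>0$,
\[
\theta(n)\exp\!\left(-\frac{\epsilon^2}{2\theta(n)^2}\right)\leq \theta(n)\cdot\frac{2\theta(n)^2}{\epsilon^2}=\frac{2}{\epsilon^2}\,\theta(n)^3.
\]
For $n\geq N$ we have $\theta(n)^3\leq\theta(n)^2$, so the tail of $S_1'(\epsilon)$ from $N$ onwards is dominated by $(2/\epsilon^2)\sum_{n\geq N}\theta(n)^2<+\infty$, while the finitely many remaining terms ($1\leq n<N$) are each finite. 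Hence $S_1'(\epsilon)<+\infty$, and since $\epsilon>0$ was arbitrary, the proof is complete.

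I do not expect any genuine obstacle here: the only points needing a moment's care are the treatment of blocks on which $\sigma$ vanishes (dealt with by the convention above) and the choice of the elementary estimate — one could equally observe that $x\mapsto x\exp(-\epsilon^2/(2x^2))$ extends continuously by $0$ to $x=0$ and is $o(x^2)$ as $x\downarrow 0$, which again reduces $S_1'(\epsilon)$ to a constant multiple of the convergent series $\sum_n\theta(n)^2$. Combined with part (A) of Theorems~\ref{theorem.XYasy} and \ref{theorem.Yclassify3}, this recovers the classical fact that $\sigma\in L^2([0,\infty);\mathbb{R}^{d\times r})$ implies $\lim_{t\to\infty}X(t)=0$ a.s.
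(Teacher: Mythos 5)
Your proof is correct and follows essentially the same strategy as the paper's: both dominate the general term $\theta(n)\exp(-\epsilon^2/(2\theta(n)^2))$ by a constant times $\theta(n)^2$ and invoke the summability of $\sum_n\theta(n)^2 \leq \int_0^\infty\|\sigma(s)\|_F^2\,ds$. The paper phrases this via the limit comparison $\lim_{x\to\infty}x^{-1}e^{-x^2/2}/x^{-2}=0$ (which is the $o(\theta(n)^2)$ observation you mention in your closing remark), whereas you use the explicit inequality $e^{-x}\le 1/x$; this is a cosmetic difference, not a different route.
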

\begin{proof}
By hypothesis, we have that $\int_{n}^{n+1} \|\sigma(s)\|_F^2\,ds  \to 0$ as $n\to\infty$. Since
\[
\lim_{x\to \infty} \frac{x^{-1}e^{-x^2/2}}{x^{-2}}=0,
\]
we have for each $\epsilon>0$ that
\[
\lim_{n\to \infty} \frac{\sqrt{\int_{n}^{n+1} \|\sigma(s)\|_F^2\,ds}\exp\left(-\frac{\epsilon^2}{2\int_{n}^{n+1} \|\sigma(s)\|_F^2\,ds}\right)}{\int_{n}^{n+1} \|\sigma(s)\|_F^2\,ds}=0.
\]
Since the denominator is a summable sequence, the numerator must also be summable; and this is simply the statement that
$S_1'(\epsilon)<+\infty$ for every $\epsilon>0$, as required.
\end{proof}

The next result characterises the asymptotic behaviour of $X$, according as to whether a certain limit exists, and is zero, finite but non--zero,
or infinite.
\begin{theorem} \label{theorem.XsiglogtL}
Suppose that $\sigma$ obeys \eqref{eq.sigmacns}. Suppose that there exists $h>0$ and $L_h\in[0,\infty]$ such that
\begin{equation} \label{eq.intsiglogntoLh}
\lim_{n\to\infty} \int_{nh}^{(n+1)h} \|\sigma(s)\|_F^2\,ds \cdot \log n =L_h.
\end{equation}
Let $A$ be a $d\times d$ real matrix whose eigenvalues all have negative real parts, and suppose that $X$ is
the unique continuous adapted process which obeys \eqref{eq.stochlinode}.
\begin{itemize}
\item[(i)] If $L_h=0$, then $\lim_{t\to\infty} X(t,\xi)=0$, a.s.
\item[(ii)] If $L_h\in (0,\infty)$, then there exist $0\leq c_1\leq c_2<\infty$ independent of $\xi$ such that
\[
c_2\leq \limsup_{t\to\infty} \|X(t,\xi)\|\leq c_2, \quad \liminf_{t\to\infty} \|X(t,\xi)\|=0, \quad \text{a.s.}
\]
and
\[
\lim_{t\to\infty} \frac{1}{t}\int_0^t \|X(s,\xi)\|^2\,ds =0,\quad\text{a.s.}
\]
\item[(iii)] If $L_h=+\infty$, then $\limsup_{t\to\infty} \|X(t,\xi)\|=+\infty$ a.s.
\end{itemize}
\end{theorem}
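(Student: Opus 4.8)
The plan is to reduce the theorem to Theorem~\ref{theorem.liniffsigma} by translating the hypothesis \eqref{eq.intsiglogntoLh} into a statement about the finiteness of the series $S_h'(\epsilon)$ from \eqref{def.Shpr}. Writing $\theta(n)^2:=\int_{nh}^{(n+1)h}\|\sigma(s)\|_F^2\,ds$, the hypothesis reads $\theta(n)^2\log n\to L_h$ as $n\to\infty$, and $S_h'(\epsilon)=\sum_{n\geq 1}\theta(n)\exp(-\epsilon^2/(2\theta(n)^2))$. For each value of $L_h$ I would determine exactly the set of $\epsilon>0$ for which this series converges, and then quote the matching conclusion of Theorem~\ref{theorem.liniffsigma}; this is legitimate because $A$ is assumed to have all eigenvalues with negative real part.

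For $L_h=0$: fixing $\epsilon>0$, since $\theta(n)^2\log n\to 0$ we have $\theta(n)\to 0$, and for all large $n$ we get $\theta(n)^2\log n<\epsilon^2/4$, hence $\exp(-\epsilon^2/(2\theta(n)^2))\le n^{-2}$; as $(\theta(n))_{n\geq 1}$ is bounded, the tail of $S_h'(\epsilon)$ is dominated by a constant times $\sum n^{-2}$, so $S_h'(\epsilon)<+\infty$ for every $\epsilon>0$. This is \eqref{eq.thetastableh}, and part (A) of Theorem~\ref{theorem.liniffsigma} gives $\lim_{t\to\infty}X(t,\xi)=0$ a.s. Symmetrically, for $L_h=+\infty$: fixing $\epsilon>0$ and any $M>\epsilon^2/2$, for all large $n$ we have $\theta(n)^2>M/\log n$, so the general term of $S_h'(\epsilon)$ is at least $\sqrt{M}\,n^{-\epsilon^2/(2M)}/\sqrt{\log n}$; since $\epsilon^2/(2M)<1$ this is not summable, giving \eqref{eq.thetaunstableh}, whence $\limsup_{t\to\infty}\|X(t,\xi)\|=+\infty$ a.s. by part (C).

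The remaining case $L_h\in(0,\infty)$ is the delicate one, and I expect it to be the only real obstacle. Here $\theta(n)^2=L_h/\log n\,(1+o(1))$, so the general term of $S_h'(\epsilon)$ is, up to a slowly varying factor, of the size $\sqrt{L_h/\log n}\cdot n^{-\epsilon^2/(2L_h)}$; the point requiring care is to confirm that this prefactor does not shift the critical exponent. To make it precise I would, for arbitrary $\delta>0$ and all large $n$, sandwich the term between $n^{-\delta-\epsilon^2(1+\delta)/(2L_h)}$ and $n^{-\epsilon^2(1-\delta)/(2L_h)}$ (using $n^{-\delta}\le\theta(n)\le 1$ eventually), then let $\delta\downarrow 0$ and invoke the elementary fact that $\sum_n n^{-s}(\log n)^{-1/2}$ and $\sum_n n^{-s}$ converge or diverge together, i.e. precisely when $s>1$. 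This shows $S_h'(\epsilon)$ is finite for $\epsilon>\sqrt{2L_h}$ and infinite for $\epsilon<\sqrt{2L_h}$, which is \eqref{eq.thetaboundedh} with $\epsilon'=\sqrt{2L_h}$; part (B) of Theorem~\ref{theorem.liniffsigma} then yields the two-sided bound on $\limsup_{t\to\infty}\|X(t,\xi)\|$ together with \eqref{eq.IcondnXbound}. Beyond this threshold computation the argument is pure bookkeeping, with no additional probabilistic input needed.
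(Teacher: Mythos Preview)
Your proof is correct. The paper does not actually supply a proof of Theorem~\ref{theorem.XsiglogtL}; it is stated in Section~3 as a readily checked sufficient condition following the main classification Theorem~\ref{theorem.liniffsigma}, with the verification left to the reader. Your reduction---translating the hypothesis $\theta(n)^2\log n\to L_h$ into a statement about the finiteness of $S_h'(\epsilon)$ for each $\epsilon>0$, and then invoking the appropriate part of Theorem~\ref{theorem.liniffsigma}---is precisely the argument the paper's structure intends, and your identification of the threshold $\epsilon'=\sqrt{2L_h}$ in case~(ii) is the correct one.

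One cosmetic remark on your case~(ii) sandwich: the exponents as written require a moment's care to match directions (since $\theta(n)^2>L_h(1-\delta)/\log n$ gives $\exp(-\epsilon^2/(2\theta(n)^2))>n^{-\epsilon^2/(2L_h(1-\delta))}$, not $n^{-\epsilon^2(1+\delta)/(2L_h)}$), but the two forms are interchangeable up to relabelling $\delta$, and either yields the same critical value. The invocation of the fact that $\sum n^{-s}(\log n)^{-1/2}$ and $\sum n^{-s}$ share convergence behaviour is a clean way to dispose of the slowly varying prefactor without the $n^{-\delta}$ absorption; either route works.
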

If pointwise conditions are preferred  to \eqref{eq.intsiglogntoLh} in Theorem~\ref{theorem.XsiglogtL}, we may instead impose the condition
\[
\lim_{t\to\infty} \|\sigma(t)\|^2_F\log t=L\in [0,\infty]
\]
on $\sigma$. In this case, if $L=0$, then $L_h=0$ in \eqref{eq.intsiglogntoLh}, and part (i) of Theorem~\ref{theorem.XsiglogtL} applies; if $L\in (0,\infty)$, then $L_h=hL$ in \eqref{eq.intsiglogntoLh} and part (ii) of
Theorem~\ref{theorem.XsiglogtL} applies; and if $L=\infty$, then $L_h=+\infty$ in \eqref{eq.intsiglogntoLh},
and part (iii) of Theorem~\ref{theorem.XsiglogtL} applies.

We can also characterise the asymptotic stability of solutions of solutions with a very simple condition, contingent on a certain class of
monotonicity conditions holding on $t\mapsto \|\sigma(t)\|$.
\begin{theorem} \label{theorem.Xto0iff}
Suppose that $\sigma$ obeys \eqref{eq.sigmacns}. Let $A$ be a $d\times d$ real matrix all of whose eigenvalues have negative real parts. Let $X$ be the unique continuous adapted process which obeys \eqref{eq.stochlinode}.
Suppose that there is $h>0$ such that the sequence $n\mapsto \int_{nh}^{(n+1)h} \|\sigma(s)\|^2_F\,ds$ is non--increasing. Then the following are equivalent.
\begin{itemize}
\item[(A)] $\lim_{n\to\infty} \int_{nh}^{(n+1)h} \|\sigma(s)\|^2_F\,ds \cdot \log n = 0$;
\item[(B)] $\lim_{t\to\infty} X(t,\xi)=0$ a.s. for each $\xi\in \mathbb{R}^d$;
\item[(C)] There exists $\xi\in \mathbb{R}^d$ such that $\lim_{t\to\infty} X(t,\xi)=0$ with positive probability.
\end{itemize}
\end{theorem}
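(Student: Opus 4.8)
The plan is to prove the three equivalences by the cyclic chain (A) $\Rightarrow$ (B) $\Rightarrow$ (C) $\Rightarrow$ (A). Throughout I write $\theta(n)^2 := \int_{nh}^{(n+1)h}\|\sigma(s)\|_F^2\,ds$, so that by hypothesis $n\mapsto\theta(n)^2$ is non-increasing, and I recall that $S_h'(\epsilon)=\sum_{n\ge 1}\theta(n)\exp(-\epsilon^2/(2\theta(n)^2))$ (the summand being interpreted as zero when $\theta(n)=0$). The crucial structural fact I will exploit is that monotonicity of $\theta(n)^2$ forces a sharp dichotomy between (A) and the divergence of $S_h'$ at small $\epsilon$.

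For (A) $\Rightarrow$ (B): condition (A) is precisely the statement that $L_h=0$ in \eqref{eq.intsiglogntoLh}, so part (i) of Theorem~\ref{theorem.XsiglogtL} applies directly and yields $\lim_{t\to\infty}X(t,\xi)=0$ a.s. for every $\xi$. (Equivalently, and using only the more basic ingredients: (A) forces $\theta(n)^2\le\delta/\log n$ eventually for every $\delta>0$, so choosing $\delta$ with $\epsilon^2/(2\delta)>2$ bounds the $n$-th summand of $S_h'(\epsilon)$ by $\theta(n)\,n^{-2}$ eventually; as $\theta(n)$ is bounded this gives $S_h'(\epsilon)<+\infty$ for all $\epsilon>0$, and then part (A) of Theorem~\ref{theorem.liniffsigma} applies.) The implication (B) $\Rightarrow$ (C) is trivial: take any $\xi\in\mathbb{R}^d$.

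The substance is (C) $\Rightarrow$ (A), which I would prove in contrapositive form: assuming (A) fails, I claim $S_h'(\epsilon)=+\infty$ for all sufficiently small $\epsilon>0$. Granting the claim, the monotone trichotomy of $\epsilon\mapsto S_h'(\epsilon)$ places us in case \eqref{eq.thetaboundedh} or case \eqref{eq.thetaunstableh}, so by parts (B) and (C) of Theorem~\ref{theorem.liniffsigma} there is a deterministic $c_1>0$ (or $c_1=+\infty$) with $\limsup_{t\to\infty}\|X(t,\xi)\|\ge c_1$ a.s. for every $\xi$; hence $\mathbb{P}[\lim_{t\to\infty}X(t,\xi)=0]=0$ for every $\xi$, i.e. (C) fails. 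To prove the claim, observe first that if $\theta(N)=0$ for some $N$ then monotonicity gives $\theta(n)=0$ for all $n\ge N$, whence (A) holds trivially; so under the failure of (A) we may assume $\theta(n)>0$ for all $n$. Failure of (A) together with monotonicity means $\limsup_{n\to\infty}\theta(n)^2\log n = L\in(0,\infty]$; fix $\ell\in(0,L)$ and a subsequence $(n_k)$ with $\theta(n_k)^2\log n_k\ge \ell$. The key estimate is that for every $n\le N$ one has $\theta(n)\ge\theta(N)$ and $\theta(n)^2\ge\theta(N)^2$, hence $\theta(n)\exp(-\epsilon^2/(2\theta(n)^2))\ge\theta(N)\exp(-\epsilon^2/(2\theta(N)^2))$; summing $n=1,\dots,N$,
\[
S_h'(\epsilon)\ \ge\ N\,\theta(N)\exp\left(-\frac{\epsilon^2}{2\theta(N)^2}\right).
\]
Taking $N=n_k$ and using $\theta(n_k)^2\ge\ell/\log n_k$ (so $\exp(-\epsilon^2/(2\theta(n_k)^2))\ge n_k^{-\epsilon^2/(2\ell)}$ and $\theta(n_k)\ge\sqrt{\ell/\log n_k}$) yields
\[
S_h'(\epsilon)\ \ge\ \sqrt{\ell}\,\frac{n_k^{\,1-\epsilon^2/(2\ell)}}{\sqrt{\log n_k}},
\]
which tends to $+\infty$ as $k\to\infty$ whenever $\epsilon<\sqrt{2\ell}$. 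Thus $S_h'(\epsilon)=+\infty$ for all $\epsilon\in(0,\sqrt{2\ell})$, proving the claim.

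I expect the main obstacle to be exactly this last step: converting pointwise information along a possibly sparse subsequence $(n_k)$ (where $\theta(n_k)^2\log n_k$ stays bounded below) into divergence of the \emph{full} series $S_h'(\epsilon)$. The device that makes it work is comparing every earlier term downward to the $N$-th term using the non-increasing property, which produces the factor $N$ in the bound $S_h'(\epsilon)\ge N\theta(N)\exp(-\epsilon^2/(2\theta(N)^2))$ and thereby overpowers the exponential decay for small $\epsilon$. The remaining points are routine: dispatching the degenerate case $\theta(n)=0$, checking that ``$S_h'$ infinite for small $\epsilon$'' lands us in case (B) or (C) rather than case (A) of the classification, and then merely quoting Theorems~\ref{theorem.XsiglogtL} and~\ref{theorem.liniffsigma}.
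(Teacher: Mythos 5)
Your proof is correct, and it recovers what I take to be the intended argument — the paper does not in fact include a proof of Theorem~\ref{theorem.Xto0iff}, so there is nothing explicit to compare against, but the device you use is precisely the classical Chan--Williams one (and is consistent with how the paper leans on the classification theorems for all its other deductions). The directions (A)$\Rightarrow$(B) via Theorem~\ref{theorem.XsiglogtL}(i), or equivalently via the direct bound $S_h'(\epsilon)<\infty$ from $\theta(n)^2\log n\to 0$, and (B)$\Rightarrow$(C) are routine and correctly handled, including the degenerate case $\theta(N)=0$.

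The one step worth scrutinising is the core estimate in the contrapositive of (C)$\Rightarrow$(A), and it checks out: because $x\mapsto x\exp(-\epsilon^2/(2x^2))$ is increasing on $(0,\infty)$ and $\theta(n)\ge\theta(N)$ for $n\le N$ by the non-increasing hypothesis, the partial sum through index $N$ is bounded below by $N\theta(N)\exp(-\epsilon^2/(2\theta(N)^2))$. Evaluating along a subsequence $n_k$ for which $\theta(n_k)^2\log n_k\ge\ell>0$, the lower bound
\[
S_h'(\epsilon)\ \ge\ \sqrt{\ell}\,\frac{n_k^{\,1-\epsilon^2/(2\ell)}}{\sqrt{\log n_k}}
\]
diverges for $\epsilon<\sqrt{2\ell}$, so $S_h'(\epsilon)=+\infty$ for all small $\epsilon$. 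This forces cases \eqref{eq.thetaboundedh} or \eqref{eq.thetaunstableh} of Theorem~\ref{theorem.liniffsigma}, each of which gives $\limsup_{t\to\infty}\|X(t,\xi)\|\ge c_1>0$ a.s.\ for every $\xi$, so $\mathbb{P}[\lim_{t\to\infty}X(t,\xi)=0]=0$ for every $\xi$, i.e.\ (C) fails. I see no gap; your self-identified ``main obstacle'' (passing from a sparse subsequence to divergence of the whole series) is exactly where the monotonicity hypothesis earns its keep, and you have resolved it cleanly.
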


Stronger monotonicity conditions which can be imposed are that
\[
t\mapsto \Sigma^2_1(t)=\int_t^{t+1}\|\sigma(s)\|^2_F\,ds, \quad t\mapsto \Sigma^2_1(t)=\|\sigma(t)\|^2_F,
\]
are non--increasing. In this case statement (A) in Theorem~\ref{theorem.Xto0iff} can be replaced by
\[
\lim_{t\to\infty} \Sigma_i^2(t)\log t=0.
\]

\section{Periodic Affine Equations}
We present one further application of our results, which enables a classification of the asymptotic behaviour of affine stochastic differential equations with 
periodic features to be analysed. Towards this end, suppose that 
\begin{equation} \label{def.Amat}
A\in C([0,\infty);\mathbb{R}^{d\times d} \text{ is a $T$--periodic function}
\end{equation}
and consider the stochastic differential equation
\begin{equation} \label{eq.sdeperiod}
dX(t)=A(t)X(t)\,dt + \sigma(t)\,dB(t), \quad t\geq 0; \quad X(0)=\xi\in \mathbb{R}^d
\end{equation}
where as before $\sigma\in C([0,\infty);\mathbb{R}^{d\times r})$ and $B$ is an $r$--dimensional standard Brownian motion. It is standard that 
there is a unique continuous adapted process which obeys \eqref{eq.sdeperiod}. 

The analysis of \eqref{eq.sdeperiod} is facilitated greatly by the introduction of the unique continuous $\mathbb{R}^{d\times d}$--valued solution of 
\begin{equation} \label{eq.fundsolnperiod}
\Psi'(t)=A(t)\Psi(t), \quad t\geq 0, \quad \Psi(0)=I_d.
\end{equation}
In general 
\[
\text{det}(\Psi(t))=\exp\left(\int_0^t \text{tr}(A(s)))\,ds\right)\neq 0, \quad t\geq 0, 
\]
so $\Psi(t)$ is invertible for all $t\geq 0$.
The matrix $\Psi(T)$ plays a central role in the asymptotic theory of \eqref{eq.fundsolnperiod} and \eqref{eq.sdeperiod}. It is called the Floquet multiplier. 
Let us assume that 
\begin{equation} \label{eq.floqmultlt1}
\rho(\Psi(T))<1
\end{equation}
where $\rho(C)$ denotes the spectral radius of the square matrix $C$. 

\begin{theorem} \label{theorem.stochfloq}
Suppose that $\sigma$ obeys \eqref{eq.sigmacns} and $A$ obeys \eqref{def.Amat}. 
Suppose that the fundamental solution $\Psi$ of \eqref{eq.fundsolnperiod} is such that $\rho(\Psi(T))<1$.
Let $X$ be the solution of \eqref{eq.sdeperiod} and suppose that $S_h'$ is defined by \eqref{def.Shpr}.
Then the following holds:
\begin{itemize}
\item[(A)] If $S_h'$ obeys \eqref{eq.thetastableh}, then  $\lim_{t\to\infty} X(t,\xi)=0$ a.s. for each
$\xi\in\mathbb{R}^d$;
\item[(B)] If $S_h'$ obeys \eqref{eq.thetaboundedh}, then there
exist deterministic $0<c_1\leq c_2<\infty$ independent of $\xi$ such
that
\[
 c_1\leq \limsup_{t\to\infty} \|X(t,\xi)\|\leq c_2, \quad \text{a.s.}
\]
Moreover
\begin{equation*}  
\liminf_{t\to\infty} \|X(t,\xi)\|=0, \quad \lim_{t\to\infty}
\frac{1}{t}\int_0^t \|X(s,\xi)\|^2\,ds=0, \quad \text{a.s.}
\end{equation*}
\item[(C)] If $S_h'$ obeys \eqref{eq.thetaunstableh}, then  $\limsup_{t\to\infty} \|X(t,\xi)\|=+\infty$ a.s. for each
$\xi\in\mathbb{R}^d$.
\end{itemize}
\end{theorem}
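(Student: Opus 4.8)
The plan is to reduce Theorem~\ref{theorem.stochfloq} to Theorem~\ref{theorem.liniffsigma} (equivalently, to the pair consisting of Theorem~\ref{theorem.XYasy} and Theorem~\ref{theorem.Yclassify3}) by exploiting the Floquet structure of \eqref{eq.fundsolnperiod} to replace the periodic drift $A(t)$ by an effective constant drift. The key observation is that because $\rho(\Psi(T))<1$, the linear deterministic periodic equation $x'(t)=A(t)x(t)$ is uniformly exponentially asymptotically stable: there exist $M\geq 1$ and $\lambda>0$ with $\|\Psi(t)\Psi(s)^{-1}\|\leq Me^{-\lambda(t-s)}$ for $t\geq s\geq 0$. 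This is the periodic analogue of the spectral bound for a constant Hurwitz matrix, and it is exactly the property of the constant-coefficient case that was actually used in establishing Theorem~\ref{theorem.XYasy}.

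Concretely, I would first use variation of constants for \eqref{eq.sdeperiod} to write
\[
X(t)=\Psi(t)\xi + \Psi(t)\int_0^t \Psi(s)^{-1}\sigma(s)\,dB(s), \quad t\geq 0,
\]
so that (since $\Psi(t)\xi\to 0$ deterministically) the asymptotic behaviour of $X$ is governed by the stochastic convolution term. Next I would compare this with the solution $Y$ of the canonical equation \eqref{def.Y}. The strategy mirrors the proof of Theorem~\ref{theorem.XYasy}: one shows that the difference $X(t)-cY(t)$ (for a suitable constant, or more precisely a suitable comparison) is dominated pathwise, using an integration-by-parts / stochastic Fubini argument together with the exponential bound $\|\Psi(t)\Psi(s)^{-1}\|\leq Me^{-\lambda(t-s)}$ in place of the bound $\|e^{A(t-s)}\|\leq Me^{-\lambda(t-s)}$ that holds when $A$ is a constant Hurwitz matrix. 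Since every estimate in the proof of Theorem~\ref{theorem.XYasy} uses $A$ only through this exponential decay bound on the transition operator, the same chain of inequalities yields: (A) $Y(t)\to 0$ a.s. implies $X(t)\to 0$ a.s.; (B) $Y$ bounded and bounded away from zero in $\limsup$ implies the same for $X$ with deterministic constants; (C) $\limsup\|Y(t)\|=\infty$ implies $\limsup\|X(t)\|=\infty$. Combining this periodic analogue of Theorem~\ref{theorem.XYasy} with Theorem~\ref{theorem.Yclassify3} (whose statement concerns only $Y$ and $S_h'$, and is therefore unaffected by the drift) gives parts (A), (B), (C). The additional conclusions in part (B) --- namely $\liminf_{t\to\infty}\|X(t,\xi)\|=0$ and $\lim_{t\to\infty}\frac1t\int_0^t\|X(s,\xi)\|^2\,ds=0$ --- follow from the corresponding statements \eqref{eq.liminfYaveY0} for $Y$, again transported through the comparison via the exponential bound.

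The main obstacle is establishing cleanly that the proof machinery underlying Theorem~\ref{theorem.XYasy} genuinely depends on $A$ only through the exponential stability estimate for the transition operator $\Psi(t)\Psi(s)^{-1}$, rather than through any special structure of $e^{A(t-s)}$ (such as the semigroup property $e^{A(t-s)}=e^{At}e^{-As}$, which the periodic transition operator does \emph{not} satisfy --- one only has $\Psi(t)\Psi(s)^{-1}$, not a product that factors through a single time). The remedy is to work throughout with the two-parameter family $\Phi(t,s):=\Psi(t)\Psi(s)^{-1}$, note that $\Phi(t,s)\Phi(s,u)=\Phi(t,u)$ and $\|\Phi(t,s)\|\leq Me^{-\lambda(t-s)}$ for $t\geq s$, and rerun the estimates; the cocycle identity is all that is needed where the semigroup property was previously invoked. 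A secondary technical point is the derivation of the uniform exponential bound itself from $\rho(\Psi(T))<1$: this is standard Floquet theory --- writing $\Psi(nT+r)=\Psi(r)\Psi(T)^n$ for $r\in[0,T)$, using $\rho(\Psi(T))<1$ to get $\|\Psi(T)^n\|\leq \tilde M q^n$ for some $q\in(\rho(\Psi(T)),1)$, and using continuity (hence boundedness) of $\Psi$ and $\Psi^{-1}$ on the compact interval $[0,T]$ --- but it should be stated explicitly as a lemma since it is the one genuinely new ingredient relative to the constant-coefficient case.
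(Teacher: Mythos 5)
Your overall strategy matches the paper for the reduction to the canonical process $Y$: use Floquet theory to get the uniform exponential estimate $\|\Psi(t)\Psi(s)^{-1}\|\le Me^{-\lambda(t-s)}$ from $\rho(\Psi(T))<1$ (the paper does this via the decomposition $\Psi(t)=P(t)e^{Lt}$ with $L$ Hurwitz and $P$ periodic), then compare $z(t):=X(t)-Y(t)$, which solves the random ODE $z'(t)=A(t)z(t)+(I_d+A(t))Y(t)$, via variation of constants. This gives (A), and the upper $\limsup$ bound in (B), exactly as you describe. For part (C) and the lower bound in (B), a small correction to your framing: the paper does \emph{not} run the estimate through the $\Psi$--cocycle bound there, but instead uses the reverse identity
\[
Y(t)=X(t)-\xi e^{-t}-\int_0^te^{-(t-s)}(I_d+A(s))X(s)\,ds,
\]
obtained by regarding \eqref{eq.sdeperiod} as a perturbation of $y'=-y$; only the boundedness of $\sup_{s\in[0,T]}\|I_d+A(s)\|$ is needed, not the Floquet exponential bound. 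So your comment about the cocycle property replacing the semigroup property is not really where the work is.

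The genuine divergence from the paper is in the two extra conclusions of (B). You propose to transport $\liminf\|Y\|=0$ and $\tfrac1t\int_0^t\|Y(s)\|^2\,ds\to 0$ to $X$ through the same comparison, which is a different and in fact shorter route than the paper takes. The paper instead re-derives the ergodic limit from scratch with a \emph{time-dependent} Lyapunov function $V(t,x)=x^T(P(t)^{-1})^\ast QP(t)^{-1}x$ (periodic in $t$), using It\^o's formula, the strong law and the LIL for the resulting local martingale; this occupies most of the section. Your transport idea does work, but it is not a one-liner and you should record the calculation: since $\|z(t)\|\le Me^{-\lambda t}\|\xi\|+MC\int_0^te^{-\lambda(t-s)}\|Y(s)\|\,ds$, one has by Cauchy--Schwarz (with weight $e^{-\lambda(t-s)}$) and Fubini
\[
\frac{1}{T}\int_0^T\Bigl(\int_0^te^{-\lambda(t-s)}\|Y(s)\|\,ds\Bigr)^2dt\le\frac{1}{\lambda T}\int_0^T\int_0^te^{-\lambda(t-s)}\|Y(s)\|^2\,ds\,dt\le\frac{1}{\lambda^2T}\int_0^T\|Y(s)\|^2\,ds,
\]
and the right side tends to zero a.s. by \eqref{eq.liminfYaveY0}; combining with $\|X\|^2\le 2\|z\|^2+2\|Y\|^2$ gives $\tfrac1t\int_0^t\|X\|^2\to 0$ a.s., whence $\liminf\|X\|=0$. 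Filling in this step, your argument is complete and avoids redeveloping the periodic Lyapunov machinery. The trade-off is that the paper's Lyapunov derivation is what it also leverages, with a minor modification, in Theorem~\ref{theorem.stochfloqsmallnoise} (where $X$ is a.s.\ unbounded and the transport from $Y$ is less immediate), so the heavier route is not purely redundant.
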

In the case that $S_h'(\epsilon)=+\infty$ for all $\epsilon>0$, but $\sigma$ obeys the fading noise condition \eqref{eq.sighto0}, 
we can refine the asymptotic result in a manner identical to that in Theorem~\ref{theorem.liniffsigmafadeS} in the autonomous case.
\begin{theorem} \label{theorem.stochfloqsmallnoise}
Suppose that $\sigma$ obeys \eqref{eq.sigmacns} and $A$ obeys \eqref{def.Amat}. 
Suppose that the fundamental solution $\Psi$ of \eqref{eq.fundsolnperiod} is such that $\rho(\Psi(T))<1$.
Let $X$ be the solution of \eqref{eq.sdeperiod} and suppose that $S_h'$ is defined by \eqref{def.Shpr}.
Suppose further that $\sigma$ obeys \eqref{eq.sighto0}.
Then the following holds:
\begin{itemize}
\item[(A)] If $S_h'$ obeys \eqref{eq.thetastableh}, then  $\lim_{t\to\infty} X(t,\xi)=0$ a.s. for each
$\xi\in\mathbb{R}^d$;
\item[(B)] If $S_h'$ obeys \eqref{eq.thetaboundedh}, then there
exist deterministic $0<c_1\leq c_2<\infty$ independent of $\xi$ such
that
\[
 c_1\leq \limsup_{t\to\infty} \|X(t,\xi)\|\leq c_2, \quad \text{a.s.}
\]
Moreover
\begin{equation*}  
\liminf_{t\to\infty} \|X(t,\xi)\|=0, \quad \lim_{t\to\infty}
\frac{1}{t}\int_0^t \|X(s,\xi)\|^2\,ds=0, \quad \text{a.s.}
\end{equation*}
\item[(C)] If $S_h'$ obeys \eqref{eq.thetaunstableh}, then  $\limsup_{t\to\infty} \|X(t,\xi)\|=+\infty$ a.s. for each
$\xi\in\mathbb{R}^d$. Moreover
\begin{equation*}  
\liminf_{t\to\infty} \|X(t,\xi)\|=0, \quad \lim_{t\to\infty}
\frac{1}{t}\int_0^t \|X(s,\xi)\|^2\,ds=0, \quad \text{a.s.}
\end{equation*}
\end{itemize}
\end{theorem}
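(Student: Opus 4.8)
The plan is to obtain parts~(A), (B), and the statement $\limsup_{t\to\infty}\|X(t,\xi)\|=+\infty$ in part~(C) directly from Theorem~\ref{theorem.stochfloq}, all of whose hypotheses are in force here; the extra assumption \eqref{eq.sighto0} is not needed for these conclusions. Thus the only genuinely new content is the claim, valid in case~(C), that $\liminf_{t\to\infty}\|X(t,\xi)\|=0$ and $\lim_{t\to\infty}\frac1t\int_0^t\|X(s,\xi)\|^2\,ds=0$ a.s. I would establish this exactly as the corresponding ``Moreover'' statements are obtained in the autonomous Theorem~\ref{theorem.liniffsigmafadeS}, the only change being that the exponential estimate $\|e^{At}\|\le Ke^{-\beta t}$ is replaced by its Floquet analogue and the constant dissipativity matrix by a $T$--periodic one.

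To set up the dissipativity structure, I would invoke Lyapunov theory for periodic linear systems. Since $\rho(\Psi(T))<1$, Floquet's theorem writes $\Psi(t)=P(t)e^{Rt}$ with $P$ continuous, $T$--periodic and invertible and $R$ a constant matrix all of whose eigenvalues have negative real part, so the transition matrix $\Phi(s,t):=\Psi(s)\Psi(t)^{-1}$ obeys $\|\Phi(s,t)\|\le Ke^{-\beta(s-t)}$ for $s\ge t\ge 0$ with $K,\beta>0$. Hence
\[
Q(t):=\int_t^\infty \Phi(s,t)^\top\Phi(s,t)\,ds, \quad t\ge 0,
\]
is well defined; by $T$--periodicity of $A$ it is $T$--periodic and $C^1$, there are $0<q_1\le q_2<\infty$ with $q_1 I_d\le Q(t)\le q_2 I_d$ for all $t$, and $Q$ solves the Lyapunov identity $\dot Q(t)+A(t)^\top Q(t)+Q(t)A(t)=-I_d$. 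Then $V(t,x):=\langle Q(t)x,x\rangle$ satisfies $q_1\|x\|^2\le V(t,x)\le q_2\|x\|^2$.

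Applying It\^o's rule to $t\mapsto V(t,X(t))$, using the Lyapunov identity and $\mathrm{tr}(\sigma(t)^\top Q(t)\sigma(t))\le q_2\|\sigma(t)\|_F^2$, gives
\[
V(t,X(t))+\int_0^t\|X(s)\|^2\,ds\le V(0,\xi)+q_2\int_0^t\|\sigma(s)\|_F^2\,ds+M(t),
\]
where $M(t)=2\int_0^t\langle Q(s)X(s),\sigma(s)\,dB(s)\rangle$ is a local martingale with $\langle M\rangle(t)\le 4q_2^2\int_0^t\|X(s)\|^2\|\sigma(s)\|_F^2\,ds$. Since $V\ge 0$, dividing by $t$ yields $\frac1t\int_0^t\|X(s)\|^2\,ds\le t^{-1}V(0,\xi)+q_2 t^{-1}\int_0^t\|\sigma(s)\|_F^2\,ds+t^{-1}M(t)$. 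The fading condition \eqref{eq.sighto0} forces $\int_{nh}^{(n+1)h}\|\sigma(s)\|_F^2\,ds\to 0$, whence $t^{-1}\int_0^t\|\sigma(s)\|_F^2\,ds\to 0$ by a Ces\`aro argument. Granting $t^{-1}M(t)\to 0$ a.s., I conclude $\frac1t\int_0^t\|X(s)\|^2\,ds\to 0$ a.s.; and since this ergodic average vanishes, necessarily $\liminf_{t\to\infty}\|X(t)\|=0$ a.s.

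The main obstacle is the estimate $t^{-1}M(t)\to 0$ a.s., since the bound on $\langle M\rangle(t)$ involves the very quantity $\int_0^t\|X(s)\|^2\,ds$ being controlled; this is the self-referential feedback that is resolved, exactly as in Theorem~\ref{theorem.liniffsigmafadeS}, by a combination of the strong law of large numbers for martingales, a Dambis--Dubins--Schwarz time change together with the law of the iterated logarithm (when $\langle M\rangle(\infty)=\infty$; when it is finite, $M$ converges and there is nothing to do), and the smallness of $\int_{nh}^{(n+1)h}\|\sigma\|_F^2\,ds$. The periodic features introduce no extra difficulty, because every norm comparison differs from its autonomous counterpart only by the finite constants $\sup_t\|P(t)\|$ and $\sup_t\|P(t)^{-1}\|$. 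It is worth noting that \eqref{eq.sighto0} is, by Proposition~\ref{prop.meansquare}, equivalent to mean--square stability, which is precisely the regime in which this ergodic behaviour is to be expected.
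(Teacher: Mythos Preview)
Your overall architecture is right: parts (A), (B) and the $\limsup$ in (C) come from Theorem~\ref{theorem.stochfloq}, and the remaining ergodic/liminf statements in (C) are to be obtained from an It\^o computation on a periodic Lyapunov function. Your construction $Q(t)=\int_t^\infty \Phi(s,t)^\top\Phi(s,t)\,ds$ is a perfectly good alternative to the paper's $M(t)=(P(t)^{-1})^\ast Q P(t)^{-1}$; both give $\dot Q+A^\top Q+QA=-I_d$ with uniform upper and lower bounds.

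There is, however, a genuine gap in your handling of the martingale term. You correctly flag the circularity --- $\langle M\rangle(t)$ is bounded by $C\int_0^t\|X(s)\|^2\|\sigma(s)\|_F^2\,ds$, which involves the very quantity you are trying to control --- but the resolution is \emph{not} a bootstrap via SLLN/LIL and the smallness of $\int_{nh}^{(n+1)h}\|\sigma\|_F^2\,ds$ alone. Those tools by themselves do not close the loop: LIL gives $|M(t)|\lesssim\sqrt{\langle M\rangle(t)\log\log\langle M\rangle(t)}$, and without an a priori growth estimate on $\|X(s)\|$ you have no usable bound on $\langle M\rangle(t)$. The decisive step, both in the paper's proof here and in the proof of Theorem~\ref{theorem.liniffsigmafadeS} you invoke, is to \emph{first} establish the pointwise growth bound
\[
\lim_{t\to\infty}\frac{\|X(t)\|^2}{\log t}=0\quad\text{a.s.},
\]
obtained by comparing $X$ with the auxiliary process $Y$ through the (periodic) variation-of-constants estimate \eqref{eq.Xtestperaff} and the bound $\|Y(t)\|^2/\log t\to 0$ (which follows from \eqref{eq.sighto0} via the LIL for the Gaussian components of $Y$). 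Only \emph{after} this a priori bound is in hand does one get $\langle M\rangle(t)=o(t\log t)$, and then LIL gives $M(t)/t\to 0$. Your sketch omits this step entirely; once you insert it, the rest of your argument goes through.
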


\section{A Key Theorem} 
The main results concerning the asymptotic behaviour of $Y$ in this paper (namely Theorems~\ref{theorem.Yclassify3} and \ref{theorem.Icondn}) are corollaries of
a key technical result, which is stated and proven in this section.

Suppose that $(t_n)_{n\geq 0}$ is an increasing sequence with
$t_0=0$ and $\lim_{n\to\infty} t_n=+\infty$. Define
\begin{equation} \label{def.St}
S_{t_\cdot}(\epsilon) = \sum_{n=0}^\infty \left\{
1-\Phi\left(\frac{\epsilon}{\sqrt{\int_{t_n}^{t_{n+1}}\|\sigma(s)\|_F^2\,ds}}
\right) \right\}.
\end{equation}
If there are uniform upper and lower bounds on the spacing of the sequence, it transpires that the finiteness (or otherwise) of the sum enables us to characterise the long run behaviour of \eqref{def.Y}.
The following theorem then characterises the pathwise asymptotic
behaviour of solutions of \eqref{def.Y}.
\begin{theorem}  \label{theorem.Yclassify2}
Suppose that $\sigma$ obeys \eqref{eq.sigmacns} and that $Y$ is the
unique continuous adapted process which obeys \eqref{def.Y}. Let
$S_{t_\cdot}(\epsilon)$ be defined by \eqref{def.St} where $t_\cdot$
is any $\epsilon$--independent sequence obeying
\begin{equation} \label{prop.tseq} t_0=0, \quad
0<\alpha\leq t_{n+1}-t_n\leq \beta<+\infty, \quad \lim_{n\to\infty}
t_n=+\infty
\end{equation}
for some $0<\alpha\leq \beta<+\infty$.
\begin{itemize}
\item[(A)] If
\begin{equation} \label{eq.thetastablet}
\text{$S_{t_\cdot}(\epsilon)$ is finite for all $\epsilon>0$},
\end{equation}
then $\lim_{t\to\infty} Y(t)=0$ a.s.
\item[(B)]
\begin{enumerate}
\item[(i)] If there exists $\epsilon'>0$ such that
\begin{equation}  \label{eq.thetaboundedt1}
\text{$S_{t_\cdot}(\epsilon)$ is finite for all
$\epsilon>\epsilon'$},
\end{equation}
then there exists a deterministic $0<c_2<+\infty$ such that
\begin{equation*} 
\limsup_{t\to\infty} \|Y(t)\|\leq c_2, \quad \text{a.s.}
\end{equation*}
\item[(ii)] On the other hand, if there exists $\epsilon''>0$ such that
\begin{equation}  \label{eq.thetaboundedt2}
\text{$S_{\tau_\cdot}(\epsilon)=+\infty$ for all
$\epsilon<\epsilon''$},
\end{equation}
where $\tau$ is any $\epsilon$--independent sequence obeying
\eqref{prop.tseq}, then there exists a deterministic $0<c_1<+\infty$
such that
\begin{equation*} 
\limsup_{t\to\infty} \|Y(t)\|\geq c_1, \quad \text{a.s.}
\end{equation*}
\end{enumerate}
\item[(C)] If
\begin{equation} \label{eq.thetaunstablet}
\text{$S_{t_\cdot}(\epsilon)=+\infty$ for all $\epsilon>0$},
\end{equation}
then
$\limsup_{t\to\infty} \|Y(t)\|=+\infty$ a.s.
\end{itemize}
\end{theorem}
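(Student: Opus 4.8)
The plan is to exploit the explicit representation \eqref{eq.Yformmult} of $Y$ and the independence and Gaussianity of the increments of the stochastic integral over the blocks $[t_n,t_{n+1}]$. First I would introduce, for each $n$, the random vector $Z_n := Y(t_{n+1}) - e^{-(t_{n+1}-t_n)}Y(t_n) = e^{-t_{n+1}}\int_{t_n}^{t_{n+1}} e^s\sigma(s)\,dB(s)$, which is independent of $\mathcal{F}^B(t_n)$ and Gaussian with a covariance matrix whose trace is comparable (using $0<\alpha\le t_{n+1}-t_n\le\beta$) to $\int_{t_n}^{t_{n+1}}\|\sigma(s)\|_F^2\,ds =: \theta_n^2$; more precisely $e^{-2\beta}\theta_n^2 \lesssim \mathbb{E}\|Z_n\|^2 \lesssim \theta_n^2$. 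The key quantitative input is a two-sided Gaussian tail estimate: for a centred Gaussian vector $Z$ with $\mathbb{E}\|Z\|^2 = v^2$, both $\mathbb{P}[\|Z\|>\epsilon]$ and $\mathbb{P}[\|Z\|\le\epsilon]$ can be bounded above and below by expressions of the form $1-\Phi(c\epsilon/v)$ (resp.\ $\Phi(\cdot)$), using that at least one coordinate in a suitable orthonormal frame carries a definite fraction of the variance. This converts the summability condition on $S_{t_\cdot}(\epsilon)$ into summability or divergence of $\sum_n \mathbb{P}[\|Z_n\|>\delta\epsilon]$ for appropriate constants, uniformly over the relevant range of $\epsilon$.

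Next I would prove part (A). Since $S_{t_\cdot}(\epsilon)<+\infty$ for all $\epsilon$, the first Borel--Cantelli lemma applied to the independent-block tail events gives that for every $\epsilon>0$, almost surely $\|Z_n\|\le\epsilon$ for all large $n$; a diagonal argument over $\epsilon=1/k$ yields $Z_n\to 0$ a.s. Then, writing the decomposition $Y(t) = e^{-(t-t_n)}Y(t_n) + (\text{a small residual stochastic integral over }[t_n,t])$ for $t\in[t_n,t_{n+1}]$, I would show $\|Y(t_n)\|\to 0$ (iterating the recursion $Y(t_{n+1}) = e^{-(t_{n+1}-t_n)}Y(t_n)+Z_n$ with contraction factor $\le e^{-\alpha}<1$ and forcing $Z_n\to 0$), and control the within-block fluctuation $\sup_{t\in[t_n,t_{n+1}]}\|Y(t)-e^{-(t-t_n)}Y(t_n)\|$ by a maximal inequality (Doob) plus Borel--Cantelli again, using that the $L^2$-size of this supremum is again $\lesssim\theta_n^2\to 0$. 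Combining gives $Y(t)\to 0$ a.s., which is \eqref{eq.Ytto0}.

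For part (C), and the lower-bound half (ii) of part (B), the mechanism is the second Borel--Cantelli lemma: since the $Z_n$ are independent and $\sum_n\mathbb{P}[\|Z_n\|>\delta\epsilon]=+\infty$ (this is where \eqref{eq.thetaunstablet}, resp.\ \eqref{eq.thetaboundedt2}, is used, via the tail estimate), almost surely $\|Z_n\|>\delta\epsilon$ infinitely often. On the event $\{\|Z_n\|>\delta\epsilon\}$ we have, from $Y(t_{n+1}) = e^{-(t_{n+1}-t_n)}Y(t_n)+Z_n$ and a crude bound on $\|Y(t_n)\|$, that either $\|Y(t_n)\|$ or $\|Y(t_{n+1})\|$ is bounded below by a fixed multiple of $\epsilon$; hence $\limsup_t\|Y(t)\|\ge c_1(\epsilon)$ a.s. Letting $\epsilon\uparrow\infty$ in case (C) gives $\limsup = +\infty$; in case (B)(ii) one takes $\epsilon$ just below $\epsilon''$. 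The upper bound (i) of part (B) is the analogue of part (A): Borel--Cantelli (first lemma) for $\epsilon>\epsilon'$ plus the geometric recursion gives $\limsup_n\|Y(t_n)\|\le c_2(\epsilon')<+\infty$ a.s., and the maximal inequality upgrades this to a bound on $\limsup_t\|Y(t)\|$.

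The main obstacle I anticipate is the lower-bound direction, i.e.\ obtaining a \emph{uniform} two-sided Gaussian tail estimate that lets a divergent $S_{t_\cdot}$-type sum force $\|Z_n\|$ to be large infinitely often: one must rule out the possibility that the divergence is ``spread thinly'' in a way incompatible with Borel--Cantelli, and one must handle the fact that $Z_n$ is a Gaussian \emph{vector} with a possibly degenerate covariance coming from the interaction of $\sigma$ with the exponential weight, so the reduction to the scalar $1-\Phi$ must go through a carefully chosen coordinate (e.g.\ the direction of largest variance, whose variance is $\ge d^{-1}\mathbb{E}\|Z_n\|^2 \gtrsim \theta_n^2$). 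The comparison constants between $\theta_n^2 = \int_{t_n}^{t_{n+1}}\|\sigma\|_F^2$ and $\mathbb{E}\|Z_n\|^2$ depend only on $\alpha,\beta$ and $d$, so the three dichotomy cases for $S_{t_\cdot}$ are preserved under these comparisons; verifying this stability of the trichotomy under the constant-shifts in $\epsilon$ (and invoking the monotonicity of $\epsilon\mapsto S_{t_\cdot}(\epsilon)$, exactly as in Proposition~\ref{prop.SHSHpr}) is the remaining bookkeeping.
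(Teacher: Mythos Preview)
Your overall strategy is correct and coincides with the paper's: define the block increments $Z_n=Y(t_{n+1})-e^{-(t_{n+1}-t_n)}Y(t_n)$, use their independence and Gaussianity to convert the $S_{t_\cdot}$ hypothesis into summability or divergence of $\sum_n\mathbb{P}[\|Z_n\|>\delta\epsilon]$, apply Borel--Cantelli (first or second lemma as appropriate), and then feed the conclusion on $(\|Z_n\|)$ through the contractive recursion $Y(t_{n+1})=e^{-(t_{n+1}-t_n)}Y(t_n)+Z_n$. Your two-sided Gaussian tail reduction via the direction of largest variance is a legitimate alternative to the paper's coordinate-wise argument through Lemma~\ref{lemma.SS1}; both give estimates of the form $c(1-\Phi(c'\epsilon/\theta_n))\le \mathbb{P}[\|Z_n\|>\epsilon]\le C(1-\Phi(C'\epsilon/\theta_n))$ with constants depending only on $d,\alpha,\beta$.

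There is, however, a genuine gap in your within-block control, and it affects both part (A) and part (B)(i). You propose to bound $\sup_{t\in[t_n,t_{n+1}]}\|Y(t)-e^{-(t-t_n)}Y(t_n)\|$ by Doob's $L^2$ maximal inequality and then invoke Borel--Cantelli ``using that the $L^2$-size of this supremum is $\lesssim\theta_n^2$''. Doob gives $\mathbb{P}[\sup>\epsilon]\le C\theta_n^2/\epsilon^2$, so Borel--Cantelli would require $\sum_n\theta_n^2<\infty$. But the hypothesis $S_{t_\cdot}(\epsilon)<\infty$ does \emph{not} imply this: for instance $\theta_n^2=1/(\log n\cdot\log\log n)$ makes $S_{t_\cdot}(\epsilon)<\infty$ for every $\epsilon>0$ (the summand decays like $n^{-\epsilon^2\log\log n/2}$), yet $\sum_n\theta_n^2=+\infty$; and in case (B)(i) the canonical borderline example is $\theta_n^2\sim c/\log n$, which is certainly not summable. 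The fix is to obtain a \emph{Gaussian} tail for the supremum that matches the summand in $S_{t_\cdot}$: the paper does this by writing the within-block stochastic integral as a continuous local martingale, applying the Dambis--Dubins--Schwarz time change to reduce to a Brownian motion, and then using the reflection principle to get $\mathbb{P}[\sup>\epsilon e^\beta]\le 4(1-\Phi(\epsilon/\theta_i(n)))$. This bound is of exactly the right form to be summed under the hypothesis, whereas the $L^2$ Doob bound is too crude. Everything else in your sketch goes through.
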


\subsection{Proof of Theorem~\ref{theorem.Yclassify2}: preliminary estimates}
We start by showing how estimates on the rows of the matrix $\sigma$
relate to its Frobenius norm. Let $(t_n)_{n\geq 0}$ is an increasing
sequence with $t_0=0$ and $\lim_{n\to\infty} t_n=+\infty$ and
define, by analogy to \eqref{def.St},
\begin{equation} \label{def.S1t}
S^{1}_{t\cdot}(\epsilon) = \sum_{n=0}^\infty \sum_{i=1}^d \left\{
1-\Phi\left(\frac{\epsilon}{\sqrt{\int_{t_n}^{t_{n+1}} \sum_{j=1}^r
\sigma_{ij}^2(s)\,ds}} \right) \right\}.
\end{equation}
Define
\begin{gather}\label{def.thetat}
\theta^2(n)=\int_{t_n}^{t_{n+1}} \|\sigma(s)\|_F^2\,ds, \\
\label{def.thetait} \theta_i^2(n)=\int_{t_n}^{t_{n+1}} \sum_{j=1}^r
\sigma_{ij}^2(s)\,ds, \quad i=1,\ldots,d. \end{gather} We can see
that as $S^1_{t\cdot}$ is a monotone function of $\epsilon$, it is
the case that either (i) $S^1_{t\cdot}(\epsilon)$ is finite for all
$\epsilon>0$; (ii) there is $\epsilon_1'>0$ such that for all
$\epsilon>\epsilon'_1$ we have $S^1_{t\cdot}(\epsilon)<+\infty$ and
$S^1_{t\cdot}(\epsilon)=+\infty$ for all $\epsilon<\epsilon'_1$; and
(iii) $S^1_{t\cdot}(\epsilon)=+\infty$ for all $\epsilon>0$. In the
next lemma, we show that $S_{t\cdot}$ defined by \eqref{def.St} is
always finite if and only if $S^1_{t\cdot}$ is; that $S_{t\cdot}$ is
infinite if and only if $S^1_{t\cdot}$ is; and that $S_{t\cdot}$ and
$S^1_{t\cdot}$ are sometimes finite and sometimes infinite only if
the other is.
\begin{lemma} \label{lemma.SS1}
Let $(t_n)_{n\geq 0}$ be an increasing sequence with $t_0=0$ and
$\lim_{n\to\infty} t_n=+\infty$. Suppose that $S_{t_\cdot}$ is
defined by \eqref{def.St} and that $S^1_{t\cdot}$ is defined by
\eqref{def.S1t}.
\begin{itemize}
\item[(a)]  The following are equivalent:
\begin{enumerate}
\item[(i)] $S_{t_\cdot}(\epsilon)<+\infty$ for all $\epsilon>0$;
\item[(ii)] $S^1_{t_\cdot}(\epsilon)<+\infty$ for all $\epsilon>0$.
\end{enumerate}
\item[(b)] The following are equivalent:
\begin{enumerate}
\item[(i)] There exists $\epsilon'>0$ such  that for all $\epsilon>\epsilon'$ we
have $S_{t_\cdot}(\epsilon)<+\infty$ and
$S_{t_\cdot}(\epsilon)=+\infty$ for all $\epsilon<\epsilon'$;
\item[(ii)] There exists $\epsilon_1'>0$ such  that for all $\epsilon>\epsilon_1'$ we
have $S^1_{t_\cdot}(\epsilon)<+\infty$ and
$S^1_{t_\cdot}(\epsilon)=+\infty$ for all $\epsilon<\epsilon_1'$;
\end{enumerate}
\item[(c)] The following are equivalent:
\begin{enumerate}
\item[(i)] $S_{t_\cdot}(\epsilon)=+\infty$ for all $\epsilon>0$;
\item[(ii)] $S^1_{t_\cdot}(\epsilon)=+\infty$ for all $\epsilon>0$.
\end{enumerate}
\end{itemize}
\end{lemma}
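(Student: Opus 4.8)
The plan is to reduce everything to a pair of two-sided comparisons between $S_{t_\cdot}$ and $S^1_{t_\cdot}$ evaluated at $\epsilon$ rescaled by a dimensional constant, and then read off parts (a), (b), (c) from the trichotomy already noted for each of these monotone functions.

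First I would record the only algebraic input: since $\|\sigma(s)\|_F^2=\sum_{i=1}^d\sum_{j=1}^r\sigma_{ij}^2(s)$, integrating over $[t_n,t_{n+1}]$ and comparing \eqref{def.thetat} with \eqref{def.thetait} gives $\theta^2(n)=\sum_{i=1}^d\theta_i^2(n)$. Consequently $\theta_i(n)\le\theta(n)$ for every $i$, while $\theta^2(n)\le d\max_{1\le i\le d}\theta_i^2(n)$, i.e. $\theta(n)\le\sqrt d\,\max_i\theta_i(n)$; note that when $\theta(n)=0$ all the $\theta_i(n)$ vanish as well. Next I would pass to tail probabilities using that $x\mapsto 1-\Phi(x)$ is non--increasing, together with the conventions $\Phi(\infty)=1$, $\Phi(-\infty)=0$ recorded after \eqref{def.Phi}, so that a block with $\theta(n)=0$ contributes zero on both sides. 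From $\theta_i(n)\le\theta(n)$ we get $1-\Phi(\epsilon/\theta_i(n))\le 1-\Phi(\epsilon/\theta(n))$; summing over $i$ and then over $n$ yields $S^1_{t_\cdot}(\epsilon)\le d\,S_{t_\cdot}(\epsilon)$ for every $\epsilon>0$. For the reverse direction, I would choose for each $n$ an index $i_n$ with $\theta_{i_n}(n)=\max_i\theta_i(n)$; then $\theta(n)\le\sqrt d\,\theta_{i_n}(n)$ gives $1-\Phi(\sqrt d\,\epsilon/\theta(n))\le 1-\Phi(\epsilon/\theta_{i_n}(n))\le\sum_{i=1}^d\{1-\Phi(\epsilon/\theta_i(n))\}$, and summing over $n$ gives $S_{t_\cdot}(\sqrt d\,\epsilon)\le S^1_{t_\cdot}(\epsilon)$. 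Thus $S_{t_\cdot}(\sqrt d\,\epsilon)\le S^1_{t_\cdot}(\epsilon)\le d\,S_{t_\cdot}(\epsilon)$ for all $\epsilon>0$.

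Finally I would conclude from this two-sided bound. For (a): if $S_{t_\cdot}(\epsilon)<+\infty$ for all $\epsilon>0$, the right-hand bound gives $S^1_{t_\cdot}(\epsilon)<+\infty$ for all $\epsilon>0$; conversely, if $S^1_{t_\cdot}(\epsilon)<+\infty$ for all $\epsilon>0$, the left-hand bound gives $S_{t_\cdot}(\sqrt d\,\epsilon)<+\infty$ for all $\epsilon>0$, and since $\sqrt d\,\epsilon$ ranges over all positive reals, $S_{t_\cdot}$ is finite everywhere. For (c): if $S_{t_\cdot}(\epsilon)=+\infty$ for all $\epsilon>0$ but $S^1_{t_\cdot}(\epsilon_0)<+\infty$ for some $\epsilon_0>0$, the left-hand bound forces $S_{t_\cdot}(\sqrt d\,\epsilon_0)<+\infty$, a contradiction, and the reverse implication uses the right-hand bound in exactly the same way. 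For (b): each of $S_{t_\cdot}$ and $S^1_{t_\cdot}$ is non--increasing in $\epsilon$, hence falls into exactly one of the three regimes listed before the lemma; parts (a) and (c) match up the ``always finite'' and ``always infinite'' regimes of the two functions, so the intermediate regimes correspond as well, and the two-sided bound even shows the two thresholds lie in $[\epsilon'/\sqrt d,\epsilon']$, which is more than (b) asks.

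I do not expect a substantial obstacle here: the content is a norm--equivalence estimate between the Frobenius norm and the collection of row norms. The only points needing care are keeping track of the direction of the inequalities under the decreasing map $1-\Phi$, and handling the degenerate block $\theta(n)=0$ via the conventions on $\Phi(\pm\infty)$ — both addressed above. For part (b) it is cleanest to argue through the trichotomy rather than to chase the exact value of the threshold.
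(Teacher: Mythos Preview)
Your proof is correct. The forward inequality $S^1_{t_\cdot}(\epsilon)\le d\,S_{t_\cdot}(\epsilon)$ matches the paper's \eqref{eq.phi1} exactly. For the reverse direction the two arguments diverge: the paper introduces auxiliary independent Gaussians $X_i(n)=\theta_i(n)Z_i(n)$, notes that $X(n)=\sum_i X_i(n)$ is $N(0,\theta^2(n))$, and then uses a union bound over the events $\{|X_i(n)|>\epsilon/d\}$ to obtain \eqref{eq.phi2}, i.e.\ $S_{t_\cdot}(\epsilon)\le S^1_{t_\cdot}(\epsilon/d)$. You instead select for each $n$ the maximal row variance $\theta_{i_n}(n)$ and use $\theta(n)\le\sqrt d\,\theta_{i_n}(n)$ directly, which is purely deterministic and gives the sharper bound $S_{t_\cdot}(\sqrt d\,\epsilon)\le S^1_{t_\cdot}(\epsilon)$. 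Your route is more elementary and avoids the probabilistic machinery; the paper's route has the minor advantage of reusing the same Gaussian-tail / union-bound template that appears later in the proof of Theorem~\ref{theorem.Yclassify2}. For part (b), your trichotomy argument is cleaner than the paper's direct threshold-chasing (which in fact contains a couple of typos in the direction of inequalities).
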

\begin{proof}
With $\theta$ and $\theta_i$ defined by \eqref{def.thetat} and
\eqref{def.thetait}, we have $\theta^2(n)\geq \theta_i(n)^2$ for
each $i=1,\ldots,d$. Thus
\begin{equation} \label{eq.phi1}
\sum_{i=1}^d
\left\{1-\Phi\left(\frac{\epsilon}{\theta_i(n)}\right)\right\}\leq d
\left(1-\Phi\left(\frac{\epsilon}{\theta(n)}\right)\right).
\end{equation}
Suppose, for each $n$, that $Z_i(n)$ for $i=1,\ldots,d$ are
independent standard normal random variables. Define
$Z(n)=(Z_1(n),Z_2(n),\ldots,Z_d(n))$ and suppose that $(Z(n))_{n\geq
0}$ are a sequence of independent normal vectors. Define finally
\[
X_i(n)=\theta_i(n)Z_i(n), \quad X(n)=\sum_{i=1}^d X_i(n), \quad
n\geq 0.
\]
Then we have that $X_i$ is a zero mean normal with variance
$\theta_i^2$ and $X$ is a zero mean normal with variance $\theta^2$.
Define $Z^\ast(n)=X(n)/\theta(n)$ is a standard normal random
variable. Therefore we have that
\begin{equation} \label{eq.thth11}
\mathbb{P}[|X(n)|>\epsilon]=\mathbb{P}[|Z^\ast(n)|\geq
\epsilon/\theta(n)]=2\mathbb{P}[Z^\ast(n)\geq \epsilon/\theta(n)]
=2\left(1-\Phi\left(\frac{\epsilon}{\theta(n)}\right)\right).
\end{equation}
With $A_i(n)=\{ |X_i(n)|\leq \epsilon/d\}$, $B(n)=\{\sum_{i=1}^d
|X_i(n)|\leq \epsilon\}$, then $\cap_{i=1}^d A_i(n) \subseteq B(n)$,
so
\[
\mathbb{P}\left[|X(n)|>\epsilon\right]\leq
\mathbb{P}[\overline{B}(n)]\leq
\mathbb{P}\left[\overline{\cap_{i=1}^d A_i(n)}\right]
=\mathbb{P}\left[\cup_{i=1}^d \overline{A_i(n)}\right] \leq
\sum_{i=1}^d\mathbb{P}\left[\overline{A_i(n)}\right].
\]
Since $X_i=\theta_i Z_i$, we have \begin{equation} \label{eq.thth12}
\mathbb{P}\left[|X(n)|>\epsilon\right]\leq
2\sum_{i=1}^d\mathbb{P}\left[X_i(n)\geq \epsilon/d\right]
=2\sum_{i=1}^d \left\{ 1-\Phi\left(\frac{\epsilon/d}{\theta_i(n)}
\right) \right\}.
\end{equation}
By \eqref{eq.thth11} and \eqref{eq.thth12}, we get
\begin{equation} \label{eq.phi2}
1-\Phi\left(\frac{\epsilon}{\theta(n)}\right) \leq \sum_{i=1}^d
\left\{ 1-\Phi\left(\frac{\epsilon/d}{\theta_i(n)} \right) \right\}.
\end{equation}
From \eqref{eq.phi1}, we can see that
$S_{t_\cdot}(\epsilon)<+\infty$ implies that
$S^1_{t_\cdot}(\epsilon)<+\infty$ and from \eqref{eq.phi2} that
$S^1_{t_\cdot}(\epsilon/d)<+\infty$ implies
$S_{t_\cdot}(\epsilon)<+\infty$. Therefore, we have that part (a)
holds. Part (c) holds similarly, because from \eqref{eq.phi1} we
have that $S^1_{t_\cdot}(\epsilon)=+\infty$ implies
$S_{t_\cdot}(\epsilon)=+\infty$, and from \eqref{eq.phi2} we have
that $S^1_{t_\cdot}(\epsilon/d)=+\infty$ implies
$S_{t_\cdot}(\epsilon)=+\infty$. As to the proof of part (b),
suppose that (i) holds. Then by \eqref{eq.phi1}, we can see that
$S^1_{t_\cdot}(\epsilon)\leq S_{t_\cdot}(\epsilon)<+\infty$ for all
$\epsilon<\epsilon'$, and by \eqref{eq.phi2} that
$S^1_{t_\cdot}(\epsilon/d)\geq S(\epsilon)=+\infty$ for all
$\epsilon<\epsilon'$. Therefore, there exists $\epsilon_1'\in
[\epsilon',\epsilon'/d]$ such that (ii) holds. The proof that (ii)
implies (i) is similar.
\end{proof}

\subsection{Organisation of the proof of Theorem~\ref{theorem.Yclassify2}} The proof is
divided into four parts: we first derive estimates and identities
common to parts (A)--(C) of Theorem~\ref{theorem.Yclassify2}.
Second, we prove \eqref{eq.Ytunstable}, which yields (C). Next, we
obtain the lower bound on the limit superior in
\eqref{eq.Ytboundedh}, which is part of (B). Finally, we find the
upper bound on the limit superior in \eqref{eq.Ytboundedh}, which
completes the proof of the limsup in (B). We also prove
\eqref{eq.Ytto0}, which proves (A).

The proof of the liminf in (B) and the ergodic--type result in part
(B) are not given at this point. Instead, we prove them independently for the
solution of the general equation \eqref{eq.stochlinode}. The results
for $Y$ are then simply corollaries of this general result, with
$A=-I_d$.
\subsection{Preliminary estimates}
Let $V(j):=\int_{t_{j-1}}^{t_j} e^{s-t_j}\sigma(s)\,dB(s)$, $j\geq
1$. Define $V_i(j)=\langle V(j),\mathbf{e}_i\rangle$. Then
\[
V_i(j)=\sum_{l=1}^r \int_{t_{j-1}}^{t_j}
e^{s-t_j}\sigma_{il}(s)\,dB_l(s).
\]
For each fixed $i$, Then $(V_i(j))_{j\geq 1}$ is a sequence of
independently and normally distributed random variables with mean
zero and variance
\[
v_i^2(j-1):=\text{Var}[V_i(j)]=\sum_{l=1}^r \int_{t_{j-1}}^{t_j}
e^{2s-2t_j}\sigma^2_{il}(s)\,ds\leq \sum_{l=1}^r
\int_{t_{j-1}}^{t_j} \sigma_{il}^2(s)\,ds = \theta^2_i(j-1).
\]
Similarly, $v^2_i(j-1)\geq e^{2(t_{j-1}-t_j)}\theta^2_i(j-1)\geq
e^{-2\beta}\theta^2_i(j-1)$, so $v_i(j-1)=0$ if and only if
$\theta_i(j-1)=0$. Also, by \eqref{eq.Yformmult}, we get
\begin{equation} \label{eq.YVconvrept}
Y(t_n)=e^{-t_n}\sum_{j=1}^n \int_{t_{j-1}}^{t_j}
e^s\sigma(s)\,dB(s)=\sum_{j=1}^n e^{-(t_n-t_j)}V(j), \quad n\geq 1.
\end{equation}
This also implies that for $n\geq 1$ we have
\begin{equation} \label{eq.Yautoregt}
Y(t_{n+1})=V(n+1)+\sum_{j=1}^n e^{-(t_{n+1}-t_j)}V(j)=V(n+1)+
e^{-(t_{n+1}-t_n)} Y(t_n).
\end{equation}
Next, as $V_i(j)$ is normally distributed, we have
 $\mathbb{P}[|V_i(j)|>\epsilon]=2(1-\Phi(\epsilon/v_i(j-1))$.
However, as $\Phi$ is increasing, and $e^{-\beta}\theta_i(j-1)\leq
v_i(j-1)\leq \theta_i(j-1)$, we have
$1-\Phi(\epsilon/e^{-\beta}\theta_i(j-1)) \leq
1-\Phi(\epsilon/v_i(j-1))\leq 1-\Phi(\epsilon/\theta_i(j-1))$, so
\begin{equation} \label{eq.Vthetamult}
2\left(1-\Phi(\epsilon/e^{-\beta}\theta_i(j-1))\right) \leq
\mathbb{P}[|V_i(j)|>\epsilon]\leq
2\left(1-\Phi(\epsilon/\theta_i(j-1))\right), \quad j\geq 1.
\end{equation}

Note that $\|V(j)\|_1=\sum_{i=1}^d |V_i(j)|$. Thus, as $\|V(j)\|_1\geq
|V_i(j)|$, we have that $\mathbb{P}[\|V(j)\|_1\geq \epsilon]\geq
\mathbb{P}[|V_i(j)|\geq \epsilon]$ for each $i=1,\ldots,d$.
Therefore
\begin{equation} \label{eq.normVvsVi1}
d\mathbb{P}[\|V(j)\|_1\geq \epsilon]\geq \sum_{i=1}^d
\mathbb{P}[|V_i(j)|\geq \epsilon]. \end{equation} On the other hand,
defining $A_i(j)=\{|V_i(j)|\leq \epsilon/d\}$ and
$B(j)=\{\|V(j)\|_1\leq \epsilon\}$, we see that $\cap_{i=1}^d
A_i(j)\subseteq B(j)$. Then
\begin{equation}   \label{eq.normVvsVi2}
\mathbb{P}[\|V(j)\|_1\geq \epsilon]=\mathbb{P}[\overline{B(j)}]\leq
\mathbb{P}\left[\overline{\cap_{i=1}^d A_i(j)}\right]
=\mathbb{P}\left[\cup_{i=1}^d \overline{A_i(j)}\right] \leq
\sum_{i=1}^d \mathbb{P}\left[|V_i(j)|\geq \epsilon/d\right].
\end{equation}
\subsection{Proof of part (C)}
Suppose $S_{t_\cdot}$ obeys \eqref{eq.thetaunstablet}. Then by
Lemma~\ref{lemma.SS1} we have that $S^1_{t\cdot}(\epsilon)=+\infty$
for every $\epsilon>0$. Therefore by \eqref{eq.Vthetamult},
$\sum_{j=1}^\infty \sum_{i=1}^d
\mathbb{P}[|V_i(j)|>\epsilon]=+\infty$ for every $\epsilon>0$.
Therefore, by \eqref{eq.normVvsVi1} we have $\sum_{j=1}^\infty
\mathbb{P}[\|V(j)\|_1\geq \epsilon]=+\infty$ for all $\epsilon>0$.
Since $(V(j))_{j\geq 1}$ are independent, it follows from the
Borel--Cantelli Lemma that for every $\epsilon>0$
$\limsup_{n\to\infty} \|V(n)\|_1>\epsilon$ a.s. Letting $\epsilon\to
\infty$ through the integers, we have $\limsup_{n\to\infty}
\|V(n)\|_1=+\infty$ a.s. Thus by \eqref{eq.Yautoregt}, we obtain
$\limsup_{n\to\infty} \|Y(t_n)\|_1=+\infty$ a.s., which implies that
$\limsup_{t\to\infty} \|Y(t)\|_1=+\infty$ a.s.
\subsection{Proof of lower bound in part (B)}
Suppose that $S_{t\cdot}$ obeys \eqref{eq.thetaboundedt2}. There
exists an $\epsilon<\epsilon'$ such that $\sum_{j=1}^\infty
\left\{1-\Phi\left(\epsilon/\theta(j)\right)\right\}=+\infty$.
Therefore, by Lemma~\ref{lemma.SS1}, it follows that there exists
$\epsilon_1'>0$ such that for all $\epsilon/e^{-\beta}<\epsilon_1'$
we have
\[
\sum_{j=1}^\infty \sum_{i=1}^d
\left\{1-\Phi\left(\frac{\epsilon}{e^{-\beta}\theta_i(j)}\right)\right\}=+\infty.
\]
By \eqref{eq.Vthetamult} we therefore have
\[
\sum_{j=1}^\infty \sum_{i=1}^d \mathbb{P}[|V_i(j)|>\epsilon] \geq
\sum_{j=1}^\infty 2\left\{1-\Phi\left(\frac{\epsilon
e^{-\beta}}{e^{-\beta}\theta(j-1)}\right)\right\}=+\infty.
\]
Therefore by \eqref{eq.normVvsVi1} we have
\[
\sum_{j=1}^\infty \mathbb{P}[\|V(j)\|_1>\epsilon] =+\infty.
\]
By the independence of $(V(j))$ together with the Borel--Cantelli
Lemma, it follows that $\limsup_{n\to\infty} \|V(n)\|_1\geq \epsilon$
a.s. Letting $\epsilon\uparrow \epsilon_1' e^{-\beta}$ through the
rational numbers gives $\limsup_{n\to\infty} \|V(n)\|_1\geq
\epsilon_1' e^{-\beta}$ on $\Omega_1$, an a.s. event. By
\eqref{eq.Yautoregt}, $V(n+1)=Y(t_{n+1})-e^{-(t_{n+1}-t_n)}Y(t_n)$,
so we have
\[
\epsilon_1' e^{-\beta}\leq \limsup_{n\to\infty} \|V(n,\omega)\|_1\leq
(1+e^{-\alpha}) \limsup_{n\to\infty}\|Y(t_n,\omega)\|_1, \quad
\text{for $\omega\in \Omega_1$}.
\]
Thus
\[
\limsup_{n\to\infty} \|Y(t_n)\|_1\geq \epsilon_1'
e^{-\beta}/(1+e^{-\alpha}),\quad\text{a.s.}, \] which implies
$\limsup_{t\to\infty} \|Y(t)\|_1\geq  \epsilon_1'
e^{-\beta}/(1+e^{-\alpha})=:c_1$, a.s.
\subsection{Proof of upper bounds in parts (A) and (B)}
Suppose that
\begin{equation} \label{eq.Vprobbound}
\sum_{j=1}^\infty
\left\{1-\Phi\left(\frac{\epsilon}{\theta(j)}\right)\right\}<+\infty.
\end{equation}
In part (A), \eqref{eq.Vprobbound} holds for all $\epsilon>0$, while
in part (B) it holds for all $\epsilon>\epsilon'$. By
\eqref{eq.Vprobbound} and \eqref{eq.phi1} we have
\begin{equation*}
\sum_{j=1}^\infty \sum_{i=1}^d
\left\{1-\Phi\left(\frac{\epsilon}{\theta_i(j)}\right)\right\}<+\infty,
\end{equation*}
and hence by \eqref{eq.Vthetamult} we have
\begin{equation*}
\sum_{j=1}^\infty \sum_{i=1}^d \mathbb{P}[|V_i(j)|\geq
\epsilon]<+\infty.
\end{equation*}
By the Borel--Cantelli lemma, it follows that
$\limsup_{n\to\infty}|V_i(n)|\leq \epsilon$ a.s. Now from
\eqref{eq.YVconvrept}, we have that
\[Y_i(t_n)=\sum_{j=1}^n e^{-(t_n-t_j)}V_i(j),
\]
so therefore, as $t_n-t_j\geq \alpha(n-j)$ for $j=1,\ldots,n$, we
have that
\[
|Y_i(t_n)|\leq \sum_{j=1}^n e^{-(t_n-t_j)}|V_i(j)|\leq \sum_{j=1}^n
e^{-\alpha(n-j)}|V_i(j)|,
\]
so
\begin{equation} \label{eq.Yiest} \limsup_{n\to\infty}
|Y_i(t_n)|\leq \epsilon \sum_{j=0}^\infty e^{-\alpha j} = \epsilon
\frac{1}{1-e^{-\alpha}}, \quad \text{a.s.}
\end{equation}


Next let $t\in [t_n,t_{n+1})$. Therefore, from \eqref{def.Y} we have
\[
Y_i(t)=Y_i(t_n)e^{-(t-t_n)}+ \sum_{l=1}^r e^{-t}\int_{t_n}^t
e^s\sigma_{il}(s)\,dB_l(s), \quad t\in [t_n,t_{n+1}).
\]
Therefore
\begin{align}
\lefteqn{\max_{t\in [t_n,t_{n+1}]} |Y_i(t)|}\nonumber\\
\label{eq.YtYnZn}
 &\leq |Y_i(t_n)|+ \max_{t\in [t_n,t_{n+1}]} e^{-t}
\left|\sum_{l=1}^r \int_{t_n}^t
e^s\sigma_{il}(s)\,dB_l(s)\right|\leq |Y_i(t_n)|+ Z_i(n),
\end{align}
where
\[
Z_i(n):=e^{-t_n}\max_{t\in [t_n,t_{n+1}]}  \left|\sum_{l=1}^r
\int_{t_n}^t e^s\sigma_{il}(s)\,dB_l(s)\right|, \quad n\geq 1.
\]

Fix $n\in \mathbb{N}$. Now
\[
\mathbb{P}[Z_i(n)>\epsilon ]=\mathbb{P}\left[\max_{t\in
[t_n,t_{n+1}]} \left|\sum_{l=1}^r \int_{t_n}^t
e^s\sigma_{il}(s)\,dB_l(s)\right|>\epsilon  e^{t_n}\right]
\]
Define $\tau_i(t):=\sum_{l=1}^r \int_{t_n}^t
e^{2s}\sigma_{il}^2(s)\,ds$ for $t\in[n,n+1]$. Consider
\[
C_{in}(t)=\sum_{l=1}^r \int_{t_n}^t e^s\sigma_{il}(s)\,dB_l(s),
\quad t\in [t_n,t_{n+1}].
\]
Then $C_{in}=\{C_{in}(t)\,:\,t_n\leq t\leq t_{n+1}\}$ is a
continuous martingale with $\langle C_{in}\rangle(t)=\tau_i(t)$ for
$t\in [t_n,t_{n+1}]$. Therefore, by the martingale time change
theorem ~\cite[Theorem V.1.6]{RevYor}, there exists a standard
Brownian motion $B^\ast_{in}$ such that
$C_{in}(t)=B^\ast_{in}(\tau_i(t))$ for $t\in [t_n,t_{n+1}]$, and so
we have
\begin{align*}
\mathbb{P}[Z_i(n)>\epsilon ]
&=\mathbb{P}\left[\max_{t\in [t_n,t_{n+1}]}
\left|B^\ast_{in}\left(\sum_{l=1}^r \int_{t_n}^t e^{2s}\sigma_{il}^2(s)\,ds\right)\right|>\epsilon  e^{t_n}\right]\\
&=\mathbb{P}\left[\max_{u\in [0,\tau_i(n+1)]} \left|B^\ast_{in}(u)\right|>\epsilon   e^{t_n}\right]\\
&=\mathbb{P}\left[\left\{\max_{u\in [0,\tau_i(n+1)]}
B^\ast_{in}(u)>\epsilon  e^{t_n}\right\}
\cup \left\{\max_{u\in [0,\tau_i(n+1)]} -B^\ast_{in}(u)>\epsilon e^{t_n}\right\}\right]\\
&\leq \mathbb{P}\left[\max_{u\in [0,\tau_i(n+1)]}
B^\ast_{in}(u)>\epsilon   e^{t_n}\right]
+\mathbb{P}\left[\max_{u\in [0,\tau_i(n+1)]} -B^\ast_{in}(u)>\epsilon   e^{t_n}\right]\\
&=\mathbb{P}\left[|B^\ast_{in}(\tau_i(n+1))|>\epsilon e^{t_n}\right]
+\mathbb{P}\left[ |B^{\ast\ast}_{in}(\tau_i(n+1))|>\epsilon
e^{t_n}\right],
\end{align*}
where $B^{\ast\ast}_{in}=-B^\ast_{in}$ is a standard Brownian
motion. Recall that if $W$ is a standard Brownian motion that
$\max_{s\in [0,t]} W(s)$ has the same distribution as $|W(t)|$.
Therefore, as $B^\ast_{in}(\tau_i(n+1))$ is normally distributed
with zero mean we have
\begin{align*}
\mathbb{P}[Z_i(n)>\epsilon] &\leq
2\mathbb{P}\left[|B^\ast_{in}(\tau_i(n+1))|>\epsilon e^{t_n}\right]
=4\mathbb{P}\left[B^\ast_{in}(\tau_i(n+1))>\epsilon e^{t_n}\right]\\
&=4\left(1-\Phi\left(\frac{\epsilon
e^{t_n}}{\sqrt{\tau_i(n+1)}}\right)\right)
=4\left(1-\Phi\left(\frac{\epsilon}{\sqrt{e^{-2t_n}\tau_i(n+1)}}\right)\right).
\end{align*}
If we interpret $\Phi(\infty)=1$, this formula holds valid in the
case when $\tau_i(n+1)=0$, because in this situation $Z_i(n)=0$ a.s.
Now
\begin{multline*}
e^{-2t_n}\tau_i(n+1)=e^{-2t_n}\sum_{l=1}^r \int_{t_n}^{t_{n+1}}
e^{2s}\sigma^2_{il}(s)\,ds\\
\leq e^{2(t_{n+1}-t_n)}\int_n^{n+1} \sigma_{il}^2(s)\,ds \leq
e^{2\beta} \theta_i^2(n). \end{multline*} Since $\Phi$ is
increasing, we have
\[
\mathbb{P}[Z_i(n)>\epsilon]\leq 4\left(
1-\Phi\left(\frac{\epsilon}{\sqrt{e^{-2t_n}\tau_i(n+1)}}\right)\right)\leq
4\left( 1-
\Phi\left(\frac{\epsilon}{e^{\beta}\theta_i(n)}\right)\right).
\]
Therefore we have
\begin{equation} \label{eq.Ziest}
\mathbb{P}[Z_i(n)>\epsilon e^\beta]\leq 4\left( 1-
\Phi\left(\frac{\epsilon}{\theta_i(n)}\right)\right), \quad n\geq 0.
\end{equation}
Hence
\[
\sum_{n=1}^\infty \mathbb{P}[Z_i(n)>\epsilon e^\beta]<+\infty,
\]
so by the Borel--Cantelli lemma, we have that
\begin{equation} \label{eq.Zilimsup}
\limsup_{n\to\infty} Z_i(n) \leq \epsilon e^\beta, \quad\text{a.s.}
\end{equation}
Therefore by \eqref{eq.YtYnZn}, \eqref{eq.Yiest} and
\eqref{eq.Zilimsup}, we have that
\begin{equation*}
\limsup_{t\to\infty} |Y_i(t)| \leq (1/(1-e^{-\alpha}) +
e^\beta)\epsilon, \quad\text{a.s.}
\end{equation*}
and so
\begin{equation}\label{eq.Yboundedepsilonmult}
\limsup_{t\to\infty} \|Y(t)\|_1 \leq d(1/(1-e^{-\alpha}) +
e^\beta)\epsilon, \quad\text{a.s.}
\end{equation}

If \eqref{eq.thetastableh} holds, \eqref{eq.Yboundedepsilonmult}
implies that $Y(t)\to 0$ as $t\to\infty$ a.s.

If the first part of \eqref{eq.thetaboundedh} holds, then
\eqref{eq.Yboundedepsilonmult} holds for every $\epsilon>\epsilon'$.
Thus, letting $\epsilon\downarrow \epsilon'$ through the rational
numbers we have $\limsup_{t\to\infty}\|Y(t)\|_1 \leq
d(1/(1-e^{-\alpha})+e^\beta)\epsilon'=:c_2$ a.s., proving
\eqref{eq.Ytboundedh}.

\section{Proof of Theorem~\ref{theorem.Icondn}}
We start by proving a preliminary lemma.
\begin{lemma} \label{lemma.xintsum}
Suppose $x\in C([0,\infty);[0,\infty))$.
\begin{itemize}
\item[(i)] If $\int_0^\infty x(t)\,dt=+\infty$, then for every $h>0$
there exists a sequence $(t_n)_{n\geq 0}$ obeying
\[
t_0=0, \quad h\leq t_{n+1}-t_n\leq 3h, \quad n\geq 0
\]
such that
\begin{equation} \label{eq.xtnnotsumm}
\sum_{n=0}^\infty x(t_n)=+\infty
\end{equation}
\item[(ii)] If $\int_0^\infty x(t)\,dt<+\infty$, then for every $h>0$
there exists a sequence $(t_n)_{n\geq 0}$ obeying
\[
t_0=0, \quad h\leq t_{n+1}-t_n\leq 3h, \quad n\geq 0
\]
such that
\begin{equation}  \label{eq:eq19}
\sum_{n=0}^\infty x(t_n)<+\infty
\end{equation}
\end{itemize}
\end{lemma}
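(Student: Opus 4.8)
The plan is to discretise $x$ by sampling it on the partition of $[0,\infty)$ into blocks $[kh,(k+1)h]$, $k\ge 0$, and to exploit the elementary bound $h\min_{[kh,(k+1)h]}x\le \int_{kh}^{(k+1)h}x(s)\,ds\le h\max_{[kh,(k+1)h]}x$, where both extrema are attained because $x$ is continuous on a compact interval. Throughout I would write $a_k:=\int_{kh}^{(k+1)h}x(s)\,ds$, so that $\int_0^\infty x(t)\,dt=\sum_{k\ge 0}a_k$.

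For part (i), I would first note that since $\sum_{k\ge 0}a_k=+\infty$ and $a_0<\infty$, at least one of $\sum_{k\ge 2,\ k\text{ even}}a_k$ and $\sum_{k\ge 1,\ k\text{ odd}}a_k$ diverges; assume it is the even one (the odd case is identical, with $t_j:=s_{2j-1}$ below). For each even $k\ge 2$ I would pick $s_k\in[kh,(k+1)h]$ with $x(s_k)=\max_{[kh,(k+1)h]}x\ge a_k/h$, and then set $t_0=0$ and $t_j:=s_{2j}$ for $j\ge 1$. The reason for skipping every other block is the spacing bound: since $s_{2j}\in[2jh,(2j+1)h]$ and $s_{2j+2}\in[(2j+2)h,(2j+3)h]$, one obtains $h\le t_{j+1}-t_j\le 3h$ for $j\ge 1$, while $t_1-t_0=s_2\in[2h,3h]\subseteq[h,3h]$. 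Finally $\sum_{j\ge 1}x(t_j)\ge h^{-1}\sum_{k\ge 2,\ k\text{ even}}a_k=+\infty$, which is \eqref{eq.xtnnotsumm}.

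For part (ii), now $\sum_{k\ge 0}a_k<+\infty$, so instead, for each even $k\ge 2$, I would choose $s_k\in[kh,(k+1)h]$ with $x(s_k)=\min_{[kh,(k+1)h]}x\le a_k/h$, and again set $t_0=0$, $t_j:=s_{2j}$ for $j\ge 1$. The spacing estimate $h\le t_{j+1}-t_j\le 3h$ is verified exactly as before (it used only $s_{2j}\in[2jh,(2j+1)h]$), and $\sum_{j\ge 0}x(t_j)=x(0)+\sum_{j\ge 1}x(s_{2j})\le x(0)+h^{-1}\sum_{k\ge 0}a_k<+\infty$, which is \eqref{eq:eq19}. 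In both parts $t_n\ge nh\to+\infty$, so $(t_n)_{n\ge 0}$ has the required form.

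I do not expect a genuine obstacle here; the one point that needs care is the spacing. One cannot sample every block, since the extrema of $x$ over two adjacent blocks may both be attained at their common endpoint, making $t_{n+1}-t_n$ arbitrarily small; sampling alternate blocks is precisely what forces $t_{n+1}-t_n\ge h$, and the worst case --- the left endpoint of one chosen block to the right endpoint of the next --- gives the sharp constant $3h$. The only other thing to watch is the constraint $t_0=0$: I would simply prepend it, which costs at most the finite term $x(0)$ in part (ii) and changes nothing in part (i).
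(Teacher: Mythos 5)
Your proof is correct. For part (i), your argument is essentially identical to the paper's: both define $s_n$ as a maximiser of $x$ over $[nh,(n+1)h]$, observe that one of the even-indexed or odd-indexed subsums must diverge, and then thin to every second block to secure the spacing bound $h\le t_{n+1}-t_n\le 3h$; your bound $x(s_k)\ge a_k/h$ is exactly the paper's $\int_{nh}^{(n+1)h}x\,ds\le h\,x(s_n)$. For part (ii), you take a genuinely different route. The paper builds the sequence \emph{recursively}, taking $t_{n+1}$ to be a minimiser of $x$ over the sliding window $[t_n+h,t_n+2h]$, which gives the tighter spacing $h\le t_{n+1}-t_n\le 2h$, and then sums the telescoping estimate $h\,x(t_{n+1})\le\int_{t_n+h}^{t_{n+1}+h}x\,dt$ over disjoint intervals tiling $[h,\infty)$. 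You instead reuse the fixed alternate-block partition from part (i), now picking minimisers and bounding $x(s_{2j})\le a_{2j}/h$ directly, then prepending $t_0=0$ at the cost of the finite term $x(0)$. Your version is more symmetric with part (i) and arguably cleaner, at the cost of a slightly looser (but still admissible) $3h$ upper spacing bound; the paper's recursive construction is a little tighter but needs the extra telescoping observation $t_n+2h\le t_{n+1}+h$. Both establish exactly what the lemma claims, and both choices of extremiser (taken as infima, as the paper does, to make them deterministic) would equally support the $\epsilon$-independence argument required later in Lemma~\ref{lemma.Iepssum}.
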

\begin{proof}
We start by proving part (i). Let $s_0=0$ and define for $n\geq 1$
\begin{equation} \label{eq.suminfinitetn}
s_n=\inf\{t\in[nh,(n+1)h]\,:\, x(t)=\max_{s\in [nh,(n+1)h]} x(s)\}.
\end{equation}
Clearly $s_n\in [nh,(n+1)h]$. Thus
\[
+\infty = \int_0^\infty x(t)\,dt = \int_0^h x(s)\,ds
+\sum_{n=1}^\infty \int_{nh}^{(n+1)h} x(s)\,ds \leq h \max_{s\in
[0,h]} x(s) + \sum_{n=1}^\infty h x(s_n).
\]
Therefore we have
\[
\sum_{n=1}^\infty x(s_{2n})+\sum_{n=0}^\infty x(s_{2n+1})=+\infty.
\]
Hence we have that either (I) $\sum_{n=1}^\infty x(s_{2n})=+\infty$
or (II) $\sum_{n=0}^\infty x(s_{2n+1})=+\infty$.

If case (I) holds, let $t_n=s_{2n}$ for $n\geq 0$. Then $t_0=0$ and
$(t_n)_{n\geq 0}$ obeys \eqref{eq.xtnnotsumm}. 
Note that $t_1-t_0=t_1=s_2\in [2h,3h]$. For $n\geq 1$, we have
$t_{n+1}-t_n=s_{2n+2}-s_{2n}$. Hence $t_{n+1}-t_n\leq
(2n+3)h-2nh=3h$. Also $t_{n+1}-t_n\geq (2n+2)h-(2n+1)h=h$. Therefore
$t_n$ obeys all the required properties.

If case (II) holds, let $t_n=s_{2n-1}$ for $n\geq 1$ and $t_0=0$.
Then $t_0=0$ and $(t_n)_{n\geq 0}$ obeys \eqref{eq.xtnnotsumm}.
Note that $t_1-t_0=t_1=s_1\in [h,2h]$. Therefore $h\leq t_1-t_0\leq
2h<3h$. For $n\geq 1$, we have $t_{n+1}-t_n=s_{2n+1}-s_{2n-1}$.
Hence $t_{n+1}-t_n\leq (2n+2)h-(2n-1)h=3h$. Also $t_{n+1}-t_n\geq
(2n+1)h-(2n-1+1)h=h$. Therefore $t_n$ obeys all the required
properties.

We now turn to the proof of part (ii).
Construct $(t_n)_{n=0}^{\infty}$ recursively as follows: let
$t_0=0$, and for $n \in \mathbb{N}$ \begin{equation}
\label{eq.sumfinitetn} t_{n+1}=\inf\{t \in [t_n+h,t_{n}+2h]\::\:
x(t) = \min_{t_n+h \leq s \leq t_n+2h} x(s)\}.
\end{equation}
The existence of such a sequence can be proved by induction on $n$,
taking note that $x$ is continuous on the compact interval
$[t_n+h,t_n+2h]$, and hence attains its minimum. By construction, we
have
\begin{equation}   \label{eq:eq4.1}
t_{n+1}-t_n \geq h>0,
\end{equation}
and $t_{n+1}-t_{n} \leq 2h$. 
To prove \eqref{eq:eq19}, note that $x(t_{n+1}) \leq x(t)$ for
$t_{n}+h \leq t \leq t_{n}+2h$, so by integrating both sides of this
inequality over $[t_{n}+h,t_{n}+2h]$, using the non-negativity of
$x(\cdot)$ and $t_{n}+2h\leq t_{n+1}-h+2h=t_{n+1}+h$ (which follows
from \eqref{eq:eq4.1}), we get
\[
h x(t_{n+1}) \leq \int_{t_{n}+h}^{t_{n}+2h} x(t)\,dt \leq
\int_{t_{n}+h}^{t_{n+1}+h} x(t)\,dt.
\]
Summing both sides of this inequality establishes \eqref{eq:eq19}.
\end{proof}

\begin{lemma} \label{lemma.Iepssum}
Suppose that $I$ is defined by \eqref{def.Ieps}.
\begin{itemize}
\item[(i)]
Suppose that $I(\epsilon)=+\infty$. Then there exists $(t_n)_{n\geq
0}$ independent of $\epsilon>0$ such that
\[
t_0=0, \quad 0<h \leq t_{n+1}-t_n\leq 3h<+\infty, \quad n\geq 0,
\]
and
\[
\sum_{n=0}^\infty\left\{
1-\Phi\left(\frac{\epsilon}{\sqrt{\int_{t_n}^{t_{n+1}}
\|\sigma(s)\|^2_F\,ds}} \right)\right\}=+\infty.
\]
\item[(ii)]
Suppose that $I(\epsilon)<+\infty$. Then there exists $(t_n)_{n\geq
0}$ independent of $\epsilon>0$ such that
\[
t_0=0, \quad 0<h \leq t_{n+1}-t_n\leq 3h<+\infty, \quad n\geq 0,
\]
and
\[
\sum_{n=0}^\infty\left\{
1-\Phi\left(\frac{\epsilon}{\sqrt{\int_{t_n}^{t_{n+1}}
\|\sigma(s)\|^2_F\,ds}} \right)\right\}<+\infty.
\]
\end{itemize}
\end{lemma}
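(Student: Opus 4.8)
The strategy is to feed the integrand of $I_c$ into Lemma~\ref{lemma.xintsum}, exploiting that this integrand is a monotone function of $\varsigma_c$, and then to pass from a summability statement about that integrand to one about $1-\Phi(\epsilon/\theta(n))$ by means of Mill's estimate \eqref{eq.millsasy}. Write $G_\epsilon(x)=x\exp(-\epsilon^2/(2x^2))$ for $x>0$ and $G_\epsilon(0)=0$, so that the integrand in \eqref{def.Ieps} is $f_\epsilon:=G_\epsilon\circ\varsigma_c$; since $\sigma$ is continuous, $\varsigma_c$ is continuous, and since $G_\epsilon$ extends continuously to $0$, we have $f_\epsilon\in C([0,\infty);[0,\infty))$ with $\int_0^\infty f_\epsilon(t)\,dt=I_c(\epsilon)$. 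The key observations are that $x\mapsto G_\epsilon(x)$ is a continuous strictly increasing bijection of $[0,\infty)$ onto itself --- so that on any interval the points where $f_\epsilon$ is maximised (respectively minimised) are exactly those where $\varsigma_c$ is, and hence do not depend on $\epsilon$ --- and that $\epsilon\mapsto G_\epsilon(x)$ is non-increasing for each fixed $x$.

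For part (i), I would run the construction in the proof of Lemma~\ref{lemma.xintsum}(i) on $x=f_\epsilon$ with $h:=c$. Apart from $t_0=0$, the sample points produced there are locations of maxima of $f_\epsilon=G_\epsilon(\varsigma_c)$, hence of $\varsigma_c$, and so are independent of $\epsilon$; the lemma yields $\sum_n f_\epsilon(t_n)=+\infty$ along one of the two interleaved subsequences, and which one can be fixed uniformly over all $\epsilon$ for which $I_c(\epsilon)=+\infty$ by a monotonicity argument (if the even subsequence failed at $\epsilon_1$ and the odd one at $\epsilon_2$, then since $G_\epsilon$ is non-increasing in $\epsilon$ both would fail at $\epsilon_1\vee\epsilon_2$, contradicting $I_c(\epsilon_1\vee\epsilon_2)=+\infty$). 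The retained sequence has $t_0=0$ and $c=h\le t_{n+1}-t_n\le 3h=3c$. Since $t_{n+1}-t_n\ge c$ we have $\theta(n)^2:=\int_{t_n}^{t_{n+1}}\|\sigma(s)\|_F^2\,ds\ge\varsigma_c(t_n)^2$ and hence $1-\Phi(\epsilon/\theta(n))\ge 1-\Phi(\epsilon/\varsigma_c(t_n))$, so it suffices to show $\sum_n\{1-\Phi(\epsilon/\varsigma_c(t_n))\}=+\infty$. If $\varsigma_c(t_n)\not\to0$ this holds along a subsequence on which $\varsigma_c(t_n)$ is bounded below; if $\varsigma_c(t_n)\to0$, then $\epsilon/\varsigma_c(t_n)\to\infty$ and \eqref{eq.millsasy} gives $1-\Phi(\epsilon/\varsigma_c(t_n))\ge\frac{1}{2\epsilon\sqrt{2\pi}}f_\epsilon(t_n)$ for all large $n$, whence the sum diverges because $\sum_n f_\epsilon(t_n)=+\infty$.

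For part (ii), I would run the construction in the proof of Lemma~\ref{lemma.xintsum}(ii) on $x=f_\epsilon$ with $h:=c/3$; the recursively chosen sample points are locations of minima of $f_\epsilon=G_\epsilon(\varsigma_c)$, hence of $\varsigma_c$, so the sequence is again independent of $\epsilon$ (no interleaving arises here), with $t_0=0$ and $c/3=h\le t_{n+1}-t_n\le 3h=c$. The lemma gives $\sum_n f_\epsilon(t_n)<+\infty$, so $f_\epsilon(t_n)\to0$, and since $G_\epsilon^{-1}$ is continuous at $0$ this forces $\varsigma_c(t_n)\to0$; then Mill's estimate shows $1-\Phi(\epsilon/\varsigma_c(t_n))$ is comparable to $f_\epsilon(t_n)$ for large $n$, so $\sum_n\{1-\Phi(\epsilon/\varsigma_c(t_n))\}<+\infty$. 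Finally $t_{n+1}-t_n\le c$ gives $\theta(n)^2\le\varsigma_c(t_n)^2$, hence $1-\Phi(\epsilon/\theta(n))\le 1-\Phi(\epsilon/\varsigma_c(t_n))$, and the required convergence follows.

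I expect the main difficulty to be bookkeeping rather than a single hard estimate. Two points require care: (a) the sampling sequence must be extracted from an $\epsilon$-dependent integrand yet be independent of $\epsilon$, which is resolved by the strict monotonicity of $G_\epsilon$ (making extremum \emph{locations} $\epsilon$-free) together with the monotonicity of $I_c(\cdot)$ to pin down the interleaved subsequence in part (i); and (b) the fixed averaging window of length $c$ built into $\varsigma_c$ must be reconciled with the variable spacing $t_{n+1}-t_n$, which forces the choices $h=c$ in (i) and $h=c/3$ in (ii) so that $[t_n,t_n+c]\subseteq[t_n,t_{n+1}]$, respectively $[t_n,t_{n+1}]\subseteq[t_n,t_n+c]$, hold and the comparison between $\theta(n)$ and $\varsigma_c(t_n)$ runs in the direction each part needs.
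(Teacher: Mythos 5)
Your proposal is correct and follows essentially the same route as the paper: feed the continuous integrand $G_\epsilon\circ\varsigma_c$ (the paper's $\phi_\epsilon\circ\zeta$) into Lemma~\ref{lemma.xintsum}, observe that the sampling points are extrema of $\varsigma_c$ and hence $\epsilon$-free because $G_\epsilon$ is strictly increasing, compare $\theta(n)$ to $\varsigma_c(t_n)$ by taking $h$ to be $c$ (resp.\ $c/3$) in part (i) (resp.\ (ii)) so the sampled window nests inside (resp.\ contains) the $[t_n,t_{n+1}]$ window, and pass to $1-\Phi$ via Mill's estimate. One nice detail: you explicitly pin down which of the two interleaved subsequences $\{s_{2n}\}$, $\{s_{2n-1}\}$ to keep in part (i) uniformly over $\epsilon$ via the monotonicity of $\epsilon\mapsto G_\epsilon$, a point the paper's proof passes over silently (it only argues that $(s_n)$ itself is $\epsilon$-free), so your version is in fact slightly more careful than the original.
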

\begin{proof}
Define \begin{equation}\label{def.zeta}
\zeta^2(t)=\int_t^{t+c}\|\sigma(s)\|^2_F\,ds, \quad t\geq 0,
\end{equation}
and $\phi_\epsilon(x)=xe^{-\epsilon^2/(2x^2)}\chi_{(0,\infty)}(x)$
for $x\geq 0$. Therefore for $x\geq 0$ we have
\[
\frac{1}{\epsilon} \phi_\epsilon(x)=\frac{1}{\epsilon}
xe^{-\epsilon^2/(2x^2)}\chi_{(0,\infty)}(x/\epsilon)=\phi_1(x/\epsilon).
\]
Then
\[
I(\epsilon)/\epsilon=\int_0^\infty
\phi_\epsilon\left(\zeta(t)\right)/\epsilon\,dt=\int_0^\infty
\phi_1\left(\zeta(t)/\epsilon\right)\,dt.
\]
Let $x_\epsilon(t)=\phi_1(\zeta(t)/\epsilon)$ for $t\geq 0$. Clearly
$x$ is a non--negative function on $[0,\infty)$, and as $\lim_{x\to
0^+} \phi_1(x)=0=\phi_1(0)$, we have that $\phi_1$ is continuous and
increasing on $[0,\infty)$. Hence $x_\epsilon$ is continuous on
$[0,\infty)$. Note therefore that
$I(\epsilon)/\epsilon=\int_0^\infty x_\epsilon(t)\,dt$.

We are now in a position to prove part (ii). Suppose that
$I(\epsilon)<+\infty$. Let $0<h\leq c/3$. Then by
Lemma~\ref{lemma.xintsum} part (ii) there exists $(t_n)_{n\geq 0}$
such that $h\leq t_{n+1}-t_n\leq 3h$ and $\sum_{n=0}^\infty
\phi_\epsilon(\zeta(t_n))<+\infty$. Recall that $t_n$ are defined by
\eqref{eq.sumfinitetn} i.e., $t_0=0$, and for $n \in \mathbb{N}$ we
have
\[
t_{n+1}=\inf\{t \in [t_n+h,t_{n}+2h]\::\: x_\epsilon(t) =
\min_{t_n+h \leq s \leq t_n+2h} x_\epsilon(s)\}.
\]
Since $x_\epsilon(t)=\phi_1(\zeta(t)/\epsilon)$ and $\phi_1$ is
increasing, it follows that
\[
t_{n+1}=\inf\{t \in [t_n+h,t_{n}+2h]\::\: \zeta(t) = \min_{t_n+h
\leq s \leq t_n+2h} \zeta(s)\},
\]
and since $\zeta$ is independent of $\epsilon$, it follows that
$(t_n)$ is independent of $\epsilon$.

 $\sum_{n=0}^\infty
\phi_\epsilon(\zeta(t_n))<+\infty$ is therefore equivalent to
\[
\sum_{n=0}^\infty
\zeta(t_n)\exp\left(-\frac{\epsilon^2}{2}\frac{1}{\zeta(t_n)^2}\right)<+\infty.
\]
This implies that $\zeta(t_n)\to 0$ as $n\to\infty$, and by \eqref{eq.millsasy} we have that
\[ \lim_{n\to\infty}
\frac{1-\Phi(\epsilon/\zeta(t_n))}{\frac{\zeta(t_n)}{\epsilon}
\exp\left(-\frac{\epsilon^2}{2}\frac{1}{\zeta^2(t_n)}\right)}=\frac{1}{\sqrt{2\pi}}.
\]
Hence we have
\begin{equation} \label{eq.phizetavsPhi1}
\sum_{n=0}^\infty
\left\{1-\Phi\left(\frac{\epsilon}{\sqrt{\int_{t_n}^{t_n+c}
\|\sigma(s)\|^2_F\,ds} } \right) \right\} = \sum_{n=0}^\infty
\left\{1-\Phi\left(\frac{\epsilon}{\zeta(t_n)} \right)
\right\}<+\infty.
\end{equation}
Since $t_{n+1}\leq t_n+3h$, and $3h\leq c$, we have
\[
\int_{t_n}^{t_n+c} \|\sigma(s)\|^2_F\,ds\geq \int_{t_n}^{t_n+3h}
\|\sigma(s)\|^2_F\,ds\geq \int_{t_n}^{t_{n+1}}
\|\sigma(s)\|^2_F\,ds.
\]
Since $\Phi$ is increasing, we have
\[
1-\Phi\left(\frac{\epsilon}{\sqrt{\int_{t_n}^{t_n+c}
\|\sigma(s)\|^2_F\,ds}}\right)\geq
1-\Phi\left(\frac{\epsilon}{\sqrt{\int_{t_n}^{t_{n+1}}
\|\sigma(s)\|^2_F\,ds}}\right).
\]
By \eqref{eq.phizetavsPhi1} we have \begin{multline*}
\sum_{n=0}^\infty \left\{
1-\Phi\left(\frac{\epsilon}{\sqrt{\int_{t_n}^{t_{n+1}}
\|\sigma(s)\|^2_F\,ds}}\right) \right\} \\
\leq \sum_{n=0}^\infty
\left\{1-\Phi\left(\frac{\epsilon}{\sqrt{\int_{t_n}^{t_n+c}
\|\sigma(s)\|^2_F\,ds} } \right) \right\}<+\infty,
\end{multline*}
which proves part (ii).

We are now in a position to prove part (i). Suppose that
$I(\epsilon)=+\infty$. Let $h\in [c,\infty)$. Then by part (i) of
Lemma~\ref{lemma.xintsum} there exists $(t_n)_{n\geq 0}$ such that
$h\leq t_{n+1}-t_n\leq 3h$ and $\sum_{n=0}^\infty
\phi_\epsilon(\zeta(t_n))=+\infty$. We now wish to show that the
$(t_n)$ are independent of $\epsilon>0$. Since they depend directly
on the sequence $(s_n)$ defined by \eqref{eq.suminfinitetn}, we must
simply show that the sequence $(s_n)$ is independent of $\epsilon$.
By \eqref{eq.suminfinitetn} we have
\begin{equation*} 
s_n=\inf\{t\in[nh,(n+1)h]\,:\, x_\epsilon(t)=\max_{s\in [nh,(n+1)h]}
x_\epsilon(s)\}.
\end{equation*}
Since $x_\epsilon(t)=\phi_1(\zeta(t)/\epsilon)$ and $\phi_1$ is
increasing, it follows that
\[
s_n=\inf\{t\in[nh,(n+1)h]\,:\, \zeta(t)=\max_{s\in [nh,(n+1)h]}
\zeta(s)\},
\]
and since $\zeta$ is independent of $\epsilon$, so are $(s_n)$ and
therefore $(t_n)$.

Next, $\sum_{n=0}^\infty \phi_\epsilon(\zeta(t_n))=+\infty$ is
equivalent to
\[
\sum_{n=0}^\infty
\zeta(t_n)\exp\left(-\frac{\epsilon^2}{2}\frac{1}{\zeta(t_n)^2}\right)=+\infty.
\]
Suppose that
\[
\sum_{n=0}^\infty \left\{1-\Phi\left(\frac{\epsilon}{\zeta(t_n)}
\right) \right\} <+\infty.
\]
Then  $\zeta(t_n)\to 0$ as $n\to\infty$, and by \eqref{eq.millsasy} we have
\[
\lim_{n\to\infty}
\frac{1-\Phi(\epsilon/\zeta(t_n))}{\frac{\zeta(t_n)}{\epsilon}
\exp\left(-\frac{\epsilon^2}{2}\frac{1}{\zeta^2(t_n)}\right)}=\frac{1}{\sqrt{2\pi}}.
\]
Hence we have that
\[
\sum_{n=0}^\infty
\zeta(t_n)\exp\left(-\frac{\epsilon^2}{2}\frac{1}{\zeta(t_n)^2}\right)<+\infty,
\]
a contradiction. Therefore we have
\begin{equation} \label{eq.phizetavsPhi2}
\sum_{n=0}^\infty
\left\{1-\Phi\left(\frac{\epsilon}{\sqrt{\int_{t_n}^{t_n+c}
\|\sigma(s)\|^2_F\,ds} } \right) \right\} =\sum_{n=0}^\infty
\left\{1-\Phi\left(\frac{\epsilon}{\zeta(t_n)} \right) \right\}
=+\infty.
\end{equation}
Next, as $c\leq h$ and $t_{n+1}\geq t_n+h$ we have
\[
\int_{t_n}^{t_n+c} \|\sigma(s)\|^2_F\,ds \leq \int_{t_n}^{t_n+h}
\|\sigma(s)\|^2_F\,ds \leq \int_{t_n}^{t_{n+1}}
\|\sigma(s)\|^2_F\,ds.
\]
Since $\Phi$ is increasing, we have
\[
1-\Phi\left(\frac{\epsilon}{\sqrt{\int_{t_n}^{t_n+c}
\|\sigma(s)\|^2_F\,ds}}\right)\leq
1-\Phi\left(\frac{\epsilon}{\sqrt{\int_{t_n}^{t_{n+1}}
\|\sigma(s)\|^2_F\,ds}}\right).
\]
By \eqref{eq.phizetavsPhi2} we have
\begin{multline*}
\sum_{n=0}^\infty \left\{
1-\Phi\left(\frac{\epsilon}{\sqrt{\int_{t_n}^{t_{n+1}}
\|\sigma(s)\|^2_F\,ds}}\right) \right\} \\
\geq \sum_{n=0}^\infty
\left\{1-\Phi\left(\frac{\epsilon}{\sqrt{\int_{t_n}^{t_n+c}
\|\sigma(s)\|^2_F\,ds} } \right) \right\}=+\infty,
\end{multline*}
which proves part (i).
\end{proof}

\begin{proof}[Proof of Theorem~\ref{theorem.Icondn}]
To prove part (A), we have by hypothesis that $I(\epsilon)<+\infty$
for all $\epsilon>0$. Then, by Lemma~\ref{lemma.Iepssum} part (ii),
for every $h\leq c/3$, there exists $(t_n)_{n\geq 0}$ independent of
$\epsilon$ for which $h\leq t_{n+1}-t_n\leq 3h$ and
\[
\sum_{n=0}^\infty\left\{
1-\Phi\left(\frac{\epsilon}{\sqrt{\int_{t_n}^{t_{n+1}}
\|\sigma(s)\|^2_F\,ds}} \right)\right\}<+\infty.
\]
Therefore by Theorem~\ref{theorem.Yclassify2} part (A), it follows
that $Y(t)\to 0$ as $t\to\infty$ a.s.

To prove part (C), we have by hypothesis that $I(\epsilon)=+\infty$
for all $\epsilon>0$. Then, by Lemma~\ref{lemma.Iepssum} part (i),
for every $h\geq c$, there exists $(t_n)_{n\geq 0}$ independent of
$\epsilon$ for which $h\leq t_{n+1}-t_n\leq 3h$ and
\[
\sum_{n=0}^\infty\left\{
1-\Phi\left(\frac{\epsilon}{\sqrt{\int_{t_n}^{t_{n+1}}
\|\sigma(s)\|^2_F\,ds}} \right)\right\}=+\infty.
\]
Therefore by Theorem~\ref{theorem.Yclassify2} part (C), it follows
that $\limsup_{t\to\infty}\|Y(t)\|=+\infty$ a.s.

To prove part (B), we have by hypothesis that $I(\epsilon)<+\infty$
for all $\epsilon>\epsilon'$. Then, by Lemma~\ref{lemma.Iepssum}
part (ii), for every $h\leq c/3$, there exists $(t_n)_{n\geq 0}$
independent of $\epsilon$ for which $h\leq t_{n+1}-t_n\leq 3h$ and
\[
\sum_{n=0}^\infty\left\{
1-\Phi\left(\frac{\epsilon}{\sqrt{\int_{t_n}^{t_{n+1}}
\|\sigma(s)\|^2_F\,ds}} \right)\right\}<+\infty.
\]
Therefore by Theorem~\ref{theorem.Yclassify2} part (B), it follows
that $\limsup_{t\to\infty}\|Y(t)\|\leq c_2$ a.s.

On the other hand, we have by hypothesis that $I(\epsilon)=+\infty$
for all $\epsilon<\epsilon'$. Then, by Lemma~\ref{lemma.Iepssum}
part (i), for every $h\geq c$, there exists $(\tau_n)_{n\geq 0}$
independent of $\epsilon$ for which $h\leq \tau_{n+1}-\tau_n\leq 3h$
and
\[
\sum_{n=0}^\infty\left\{
1-\Phi\left(\frac{\epsilon}{\sqrt{\int_{\tau_n}^{\tau_{n+1}}
\|\sigma(s)\|^2_F\,ds}} \right)\right\}=+\infty.
\]
Therefore by Theorem~\ref{theorem.Yclassify2} part (B), it follows
that $\limsup_{t\to\infty}\|Y(t)\|\geq c_1$ a.s.
\end{proof}

\section{Proofs of Theorem~\ref{theorem.XYasy} and \ref{theorem.nonstabilise}}
\subsection{Proof of Theorem~\ref{theorem.XYasy}}
Let $z(t,\omega)=X(t,\omega)-Y(t,\omega)$ for $t\geq 0$. Then
$z(0)=\xi$ and
\[
z'(t,\omega)=AX(t,\omega)+Y(t,\omega)=Az(t,\omega)+g(t,\omega),
\quad t\geq 0
\]
where
\begin{equation}\label{def.g}
g(t,\omega)=AY(t,\omega)+Y(t,\omega), \quad t\geq 0.
\end{equation}
 Let $\Psi$ be the
unique continuous $d\times d$--valued matrix solution of
\[
\Psi'(t)=A\Psi(t), \quad t\geq 0; \quad \Psi(0)=I_d.
\]
Since all eigenvalues of $A$ have negative real parts, there exist
$K>0$ and $\lambda>0$ such that
\begin{equation} \label{eq.Psiexpest}
\|\Psi(t)\|_2\leq Ke^{-\lambda t}, \quad t\geq 0.
\end{equation}
Now by variation of constants, $z$ is given by
\begin{equation} \label{eq.zvarp}
z(t,\omega)=\Psi(t)\xi+\int_0^t \Psi(t-s)g(s,\omega)\,ds, \quad
t\geq 0.
\end{equation}

To prove statement (A), 
suppose that $Y(t,\omega)\to 0$ as $t\to\infty$ for all $\omega\in
\Omega^\ast$ where $\Omega^\ast$ is an a.s. event. We show now that
$X(t,\xi,\omega)\to 0$ as $t\to\infty$ for every
$\xi\in\mathbb{R}^d$ and every $\omega\in \Omega^\ast$, which would
prove statement (A). Since $Y(t,\omega)\to 0$ as $t\to\infty$ we
have $g(t,\omega)\to 0$ as $t\to\infty$. Therefore by
\eqref{eq.zvarp}, we have $z(t,\omega)\to 0$ as $t\to\infty$. Since
$Y(t,\omega)\to 0$ as $t\to\infty$ and $\Psi(t)\to 0$ as
$t\to\infty$, it follows that $X(t,\omega)\to 0$ as $t\to\infty$.

To prove the upper bound in part (B), note that because there is a deterministic
$c_2>0$ such that $\limsup_{t\to\infty} \|Y(t)\|_2\leq c_2$ a.s., we
have
\[
\limsup_{t\to\infty} \|g(t)\|_2\leq \|I+A\|_2c_2, \quad \text{a.s.}
\]
Using this estimate, the fact that $\Psi(t)\to 0$ as $t\to\infty$,
and \eqref{eq.zvarp} we get
\[
\limsup_{t\to\infty} \|z(t)\|_2\leq \int_0^\infty \|\Psi(s)\|_2\,ds \cdot
\|I+A\|_2c_2=:c_4, \quad \text{a.s.}
\]
Hence we have $\limsup_{t\to\infty} \|X(t)\|_2\leq c_2+c_4=:c_5$ a.s.,
which proves the upper estimate in (B).

To prove the lower bound in part (B), notice that by rewriting
\eqref{eq.stochlinode} in the form
\[
dX(t)=\left(-X(t)+\left\{AX(t)+X(t)\right\}\right)\,dt + \sigma(t)\,dB(t),
\]
and by using stochastic integration by parts and deterministic variation of constants, we arrive at
\[
X(t)=\xi e^{-t} + \int_{0}^t e^{-(t-s)}(I+A)X(s)\,ds + Y(t), \quad t\geq 0.
\]
Therefore, we have that
\begin{equation} \label{eq.YX}
Y(t)=X(t)-\xi e^{-t} - \int_{0}^t e^{-(t-s)}(I+A)X(s)\,ds,\quad t\geq 0.
\end{equation}
Suppose now that $\Omega_1=\{\omega:\limsup_{t\to\infty} \|Y(t,\omega)\|_2\geq c_1\}$, where it is already known that
$\Omega_1$ is an a.s. event.
Then for $\omega\in \Omega_1$, we have
\begin{multline*}
c_1\leq \limsup_{t\to\infty}\|Y(t,\omega)\|_2\\
\leq \limsup_{t\to\infty}\|X(t,\omega)\|_2 +
\|I+A\|_2\limsup_{t\to\infty}\int_{0}^t e^{-(t-s)}\|X(s,\omega)\|_2\,ds.
\end{multline*}
Therefore we arrive at
\[
c_1\leq (1+\|I+A\|_2)\limsup_{t\to\infty}\|X(t,\omega)\|_2,
\]
for each $\omega\in\Omega_1$, and so
\[
\limsup_{t\to\infty}\|X(t)\|_2 \geq c_3:=\frac{c_1}{1+\|I+A\|_2}, \quad\text{a.s.},
\]
as required.


To prove statement (C), we start by noting by hypothesis that the event
$\Omega_2=\{\omega:\limsup_{t\to\infty} \|Y(t,\omega)\|_2=+\infty\}$ is almost sure. Now suppose that there is an event $C=\{\omega: \limsup_{t\to\infty} \|X(t,\omega)\|<+\infty\}\cap \Omega_2$
such that $\mathbb{P}[C]>0$. Taking norms on both sides of \eqref{eq.YX} for $\omega\in C$ yields
\[
\|Y(t,\omega)\|_2\leq \|X(t,\omega)\|_2 +\|\xi\|_2 e^{-t} + \|I+A\|_2 \int_{0}^t e^{-(t-s)} \|X(s,\omega)\|_2\,ds.
\]
Define for $\omega\in C$ the finite $c(\omega):=\limsup_{t\to\infty} \|X(t,\omega)\|_2$. Then
\[
+\infty=
\limsup_{t\to\infty}
\|Y(t,\omega)\|_2\leq c(\omega) + \|I+A\|_2 c(\omega)<+\infty,
\]
a contradiction. Therefore, we must have that $\limsup_{t\to\infty}\|X(t)\|=+\infty$ a.s. as required.

\subsection{Proof of Theorem~\ref{theorem.nonstabilise}}
Theorem~\ref{theorem.IcondnX} shows that (A) implies (C), and (C)
clearly implies (B). It remains to prove that (B) implies (A).
Define $\xi_0=0$ and for $i=1,\ldots,d$ set
$\zeta_i=\xi_i-\xi_{i-1}$.
Next, for $\omega\in C$, define
$V_i(t,\omega)=X(t,\xi_i,\omega)-X(t,\xi_{i-1},\omega)$ for
$i=1,\ldots,d$. Therefore by hypothesis we have that
$V_i(t,\omega)\to 0$ as $t\to\infty$. Moreover, we see that $V_i$
obeys the differential equation
\[
V_i'(t,\omega)=AV_i(t,\omega), \quad t\geq 0, \quad
V_i(0,\omega)=\xi_i-\xi_{i-1}=\zeta_i.
\]
If $\Psi\in \mathbb{R}^{d\times d}$ is the principal matrix solution
given by $\Psi'(t)=A\Psi(t)$ with $\Psi(0)=I_d$, then
$V_i(t,\omega)=\Psi(t)\zeta_i$. Therefore we have that
$\Psi(t)\zeta_i\to 0$ as $t\to\infty$ for each $i=1,\ldots,d$. Since
$(\zeta_i)_{i=1}^d$ are linearly independent, we have that
$\Psi(t)\to 0$ as $t\to\infty$. Hence it follows that all the
eigenvalues of $A$ have negative real parts.

Let $Y$ be the solution of \eqref{def.Y}. Writing $X$ as
\[
dX(t)=\left(-X(t)+\{X(t)+AX(t)\}\right)\,dt + \sigma(t)\,dB(t),
\]
by variation of constants, we see that
\[
X(t)=X(0)e^{-t} + \int_{0}^t e^{-(t-s)}\{X(s)+AX(s)\}\,ds + Y(t),
\quad t\geq 0.
\]
Therefore, we see that $Y(t,\omega)\to 0$ as $t\to\infty$ for each
$\omega\in C$. Since $C$ is an event of positive probability, we see
from Theorem~\ref{theorem.Icondn} that $Y(t)\to 0$ as $t\to\infty$
a.s., and that therefore $I(\epsilon)$ is finite for all
$\epsilon>0$. We have therefore shown that (B) implies both
conditions in (A), as required.

\section{Proof of \eqref{eq.liminfYaveY0} in part (B) of Theorems~\ref{theorem.Yclassify3} and \ref{theorem.Icondn}
and of \eqref{eq.IcondnXbound} in part (B) of Theorems~\ref{theorem.liniffsigma},~\ref{theorem.IcondnX}}
We note first that the proof of \eqref{eq.liminfYaveY0}  in part (B) of Theorem~\ref{theorem.Yclassify3} is a direct corollary
of part (B) in Theorem~\ref{theorem.liniffsigma}, where $A=-I_d$. Similarly, the proof of \eqref{eq.liminfYaveY0}  in part (B) of Theorem~\ref{theorem.Icondn} is a corollary of part (B) in Theorem~\ref{theorem.IcondnX}.

To prove \eqref{eq.IcondnXbound} in  Theorem~\ref{theorem.liniffsigma}, it suffices to show that $\|X\|$ being bounded a.s.
and $S_1(\epsilon)<+\infty$ for some $\epsilon>0$  implies  \eqref{eq.IcondnXbound}; on the other hand, to prove
 \eqref{eq.IcondnXbound} in  Theorem~\ref{theorem.IcondnX}, it suffices to show that $\|X\|$ being bounded a.s.
and $I(\epsilon)<+\infty$ for some $\epsilon>0$  implies  \eqref{eq.IcondnXbound}. We note that
if there is an $\epsilon^\ast>0$ such that $I(\epsilon^\ast)<+\infty$, then there is an $\epsilon>0$ such that $S_1(\epsilon)<+\infty$. Hence it only remains to prove that  $\|X\|$ being bounded a.s.
and $S_1(\epsilon)<+\infty$ for some $\epsilon>0$  implies  \eqref{eq.IcondnXbound}.

To do this, we note that  $S_1(\epsilon)<+\infty$ for some $\epsilon>0$ implies $\int_n^{n+1}\|\sigma(s)\|^2_F\,ds\to 0$ as $n\to\infty$. In turn, this implies
\begin{equation}  \label{eq.cesarosigF}
\lim_{t\to\infty} \frac{1}{t}\int_0^t \|\sigma(s)\|^2_F\,ds=0.
\end{equation}
Since all the eigenvalues of $A$ have negative real part, there
exists a $d\times d$ positive definite matrix $M$ such that
\begin{equation}\label{def.Mmatrix}
A^TM+MA=-I_d.
\end{equation}
(see for example Horn and Johnson~\cite{HornJohnson:1991} or
Rugh~\cite{Rugh:1996}). Define $V(x)=x^TMx$ for all
$x\in\mathbb{R}^d$. Notice that
\[
\frac{\partial V}{\partial x_i}= [2Mx]_i=\sum_{k=1}^d 2M_{ik}x_k.
\]
Therefore we have
\[
\frac{\partial^2 V}{\partial x_i\partial x_j}(x)=2M_{ij}.
\]
Let $X_i(t)=\langle X(t),e_i\rangle$. Notice that the
cross--variation of $X_i$ and $X_j$ obeys
\[
d\langle X_i,X_j\rangle(t)=\sum_{k=1}^r
\sigma_{ik}(t)\sigma_{jk}(t)\,dt.
\]
Therefore, as $V$ is a $C^2$ function, by the multidimensional
version of It\^o's formula, we have
\[
dV(X(t))=\sum_{i=1}^d \frac{\partial V}{\partial x_i}(X(t)) dX_i(t)
+ \frac{1}{2}\sum_{i=1}^d \sum_{j=1}^d \frac{\partial^2 V}{\partial
x_i\partial x_j}(X(t)) d\langle X_i,X_j\rangle(t).
\]
Hence
\begin{multline*}
dV(X(t))=\langle 2MX(t),AX(t)\rangle\,dt  + \sum_{i=1}^d \sum_{j=1}^d  M_{ij} \sum_{k=1}^r \sigma_{ik}(t)\sigma_{jk}(t)\,dt \\
+  \langle 2MX(t),\sigma(t)\,dB(t)\rangle.
\end{multline*}
Next, we note that because $M=M^T$ and $A^TM+MA=-I_d$, we have
\begin{multline*}
\langle 2Mx,Ax\rangle=\langle (M+M^T)x,Ax\rangle =\langle
Mx,Ax\rangle +\langle Ax,M^Tx\rangle
\\=(Mx)^TAx+(Ax)^T M^Tx
=x^TM^TAx+x^TA^TMx=-x^T x.
\end{multline*}
Also, since $M$ is positive definite, there exists a $d\times d$
matrix $P$ such that $M=PP^T$, so we have
\begin{align*}
\sum_{i=1}^d \sum_{j=1}^d  M_{ij} \sum_{k=1}^r
\sigma_{ik}(t)\sigma_{jk}(t) &=
\sum_{i=1}^d  \sum_{k=1}^r  \left(\sum_{j=1}^d  M_{ij}\sigma_{jk}(t) \right)\sigma_{ik}(t)\\
&= \sum_{i=1}^d  \sum_{k=1}^r   [M \sigma(t)]_{ik}  \sigma^T_{ki}(t)
=
\sum_{i=1}^d   [M \sigma(t)\sigma(t)^T]_{ii}\\
&= \sum_{i=1}^d   [PP^T \sigma(t)\sigma(t)^T]_{ii} =
\text{tr}(PP^T\sigma(t)\sigma(t)^T)\\
&= \text{tr}(P^T\sigma(t)\sigma(t)^TP)=\|P^T\sigma(t)\|_F^2.
\end{align*}
where we have used the fact that $\|C\|_F^2=\text{tr}(CC^T)$ for any
matrix $C$ and that $\text{tr}(CD)=\text{tr}(DC)$ for square
matrices $C$ and $D$. Thus
\begin{equation} \label{eq.masterV1}
V(X(t))=V(\xi)-\int_0^t X(s)^TX(s)\,ds + \int_0^t \|P^T\sigma(s)\|_F^2\,ds +  K(t), \quad t\geq 0,
\end{equation}
where
\begin{equation}\label{def.K}
K(t)= \sum_{j=1}^r \int_0^t  \left\{\sum_{i=1}^d [2MX(s)]_i
\sigma_{ij}(s)\right\} \,dB_j(s), \quad t\geq 0.
\end{equation}
We consider the third term on the righthand side of
\eqref{eq.masterV1}. Since $\|P^T\sigma(s)\|_F\leq
\|P^T\|_F\|\sigma(s)\|_F$, from \eqref{eq.cesarosigF}, we have that
\begin{equation} \label{eq.Psigcesaro}
\lim_{t\to\infty} \frac{1}{t}\int_0^t \|P^T\sigma(s)\|_F^2\,ds=0.
\end{equation}
 As to $K$, the fourth term on the righthand side of
\eqref{eq.masterV1}, we see that $K$ is a local martingale with quadratic variation given by
\[
\langle K \rangle(t)= \sum_{j=1}^r \int_0^t  \left\{\sum_{i=1}^d
[2MX(s)]_i  \sigma_{ij}(s)\right\}^2 \,ds.
\]
Hence by the Cauchy--Schwarz inequality, we have
\begin{equation} \label{eq.qvKest}
\langle K \rangle(t)\leq  \sum_{j=1}^r \int_0^t  \sum_{i=1}^d
[2MX(s)]_i^2 \sum_{i=1}^d \sigma_{ij}^2(s) \,ds = 4\int_0^t
\|MX(s)\|_2^2 \|\sigma(s)\|^2_F \,ds.
\end{equation}
Since $t\mapsto \|X(t)\|$ is bounded a.s., we may use
\eqref{eq.cesarosigF} to get
\[
\lim_{t\to\infty} \frac{1}{t}\langle K \rangle(t)=0,
\quad\text{a.s.}
\]
Hence by the strong law of large numbers for martingales, we have
that $K(t)/t\to 0$ as $t\to\infty$ a.s. Since $t\mapsto \|X(t)\|$ is
bounded a.s. we have that $V(X(t))/t\to 0$ as $t\to\infty$ a.s.
Therefore, returning to \eqref{eq.masterV1}, we get
\begin{equation}\label{eq.xNxdivtto0}
\lim_{t\to\infty} \frac{1}{t}\int_0^t X(s)^TX(s)\,ds =0,
\quad\text{a.s.}
\end{equation}
Suppose now that there is an event $A_1$ with $\mathbb{P}[A_1]>0$
such that
\[
A_1=\{\omega:\liminf_{t\to\infty} \|X(t,\omega)\|>0\}.
\]
Since $t\mapsto \|X(t)\|$ is bounded, it follows that for each
$\omega\in A_1$, there is a positive and finite $\bar{x}(\omega)$
such that
\[
\liminf_{t\to\infty} \|X(t,\omega)\|_2=:\bar{x}(\omega).
\]
Therefore for $\omega\in A_1$ we have
\begin{equation*}
\liminf_{t\to\infty} \frac{1}{t} \int_0^t
X(s,\omega)^TX(s,\omega)\,ds
\geq \bar{x}(\omega)>0.
\end{equation*}
Therefore
\[
\liminf_{t\to\infty} \frac{1}{t}\int_0^t X(s)^TX(s)\,ds>0, \quad
\text{a.s. on $A_1$},
\]
which contradicts \eqref{eq.xNxdivtto0}, because
$\mathbb{P}[A_1]>0$. Therefore, it must be the case that
$\mathbb{P}[A_1]=0$, which implies that
$\mathbb{P}[\overline{A}_1]=1$, or $\liminf_{t\to\infty} \|X(t)\|=0$
a.s. as required.

\section{Proofs of Proposition~\ref{prop.meansquare} and Part (C) of Theorem~\ref{theorem.liniffsigmafadeS}}
We prove a simple lemma which will be of utility in the proof of each of these results. 
\begin{lemma}  \label{lemma.econvfto0}
Suppose that $f:[0,\infty)\to \mathbb{R}$ is a continuous function such that 
\[
\lim_{n\to\infty} \int_{nh}^{(n+1)h} f^2(s)\,ds =0.
\]
Then for any $\lambda>0$ we have
\[
\lim_{t\to\infty} \int_{0}^t e^{-2\lambda(t-s)}f^2(s)\,ds =0.
\]
\end{lemma}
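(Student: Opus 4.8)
The plan is to exploit the fact that the hypothesis says the ``energy'' of $f$ over consecutive windows of length $h$ tends to zero, and to show that the exponentially--weighted convolution cannot see more than a geometrically--decaying tail of contributions from these windows. First I would fix $\lambda>0$, set $a_n:=\int_{nh}^{(n+1)h} f^2(s)\,ds$, and record that by hypothesis $a_n\to 0$ as $n\to\infty$; in particular $(a_n)$ is bounded, say $a_n\le M$ for all $n$. Given $t>0$, let $N=N(t)=\lfloor t/h\rfloor$, so that $t\in[Nh,(N+1)h)$, and split the integral $\int_0^t e^{-2\lambda(t-s)}f^2(s)\,ds$ into the pieces over $[kh,(k+1)h)\cap[0,t]$ for $k=0,1,\dots,N$. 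On each such piece we have $t-s\ge t-(k+1)h\ge (N-k-1)h$, so $e^{-2\lambda(t-s)}\le e^{-2\lambda(N-k-1)h}=:C\,r^{\,N-k}$ with $r:=e^{-2\lambda h}\in(0,1)$ and $C:=e^{2\lambda h}$. Hence
\[
\int_0^t e^{-2\lambda(t-s)}f^2(s)\,ds \le C\sum_{k=0}^{N} r^{\,N-k} a_k = C\sum_{m=0}^{N} r^{\,m} a_{N-m}.
\]

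The remaining step is the standard argument that the convolution of a summable geometric sequence with a null sequence is null. Given $\varepsilon>0$, choose $K$ so that $a_j<\varepsilon$ for all $j\ge K$. Then for $N\ge 2K$ I would split the sum $\sum_{m=0}^{N} r^m a_{N-m}$ at $m=N-K$: the ``small index'' part $\sum_{m=0}^{N-K} r^m a_{N-m}$ has all its $a$-indices $\ge K$, hence is bounded by $\varepsilon\sum_{m=0}^{\infty} r^m = \varepsilon/(1-r)$; the ``large index'' part $\sum_{m=N-K+1}^{N} r^m a_{N-m}$ is bounded by $M\sum_{m=N-K+1}^{\infty} r^m = M r^{N-K+1}/(1-r)$, which tends to $0$ as $N\to\infty$ since $r<1$. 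Therefore $\limsup_{N\to\infty}\sum_{m=0}^{N} r^m a_{N-m}\le \varepsilon/(1-r)$; since $\varepsilon>0$ is arbitrary the limit is $0$, and since $N(t)\to\infty$ as $t\to\infty$ we conclude $\lim_{t\to\infty}\int_0^t e^{-2\lambda(t-s)}f^2(s)\,ds=0$, as required.

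I do not anticipate a genuine obstacle here; the only point requiring a little care is the bookkeeping in the dyadic-type split of the convolution sum and making sure the exponential bound $e^{-2\lambda(t-s)}\le C r^{N-k}$ is applied with the correct floor of $t/h$ (one must not lose a constant factor when $s$ ranges over the last, possibly incomplete, window $[Nh,t)$). An alternative, essentially equivalent, route would be to write $\int_0^t e^{-2\lambda(t-s)}f^2(s)\,ds$ directly as $\int_0^\infty e^{-2\lambda u}f^2(t-u)\mathbf{1}_{[0,t]}(u)\,du$ and apply dominated convergence after establishing $\frac1h\int_{v}^{v+h}f^2\to 0$ locally uniformly in $v$; but the elementary geometric-sum estimate above is cleaner and self-contained, so that is the version I would write up.
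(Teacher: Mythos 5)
Your proof is correct and follows essentially the same route as the paper: partition $[0,t]$ into windows of length $h$, bound the exponential kernel by a geometric sequence $r^{N-k}$ with $r=e^{-2\lambda h}$, and reduce to the statement that the discrete convolution of a summable geometric sequence with the null sequence $(a_n)$ tends to zero. The only difference is cosmetic -- the paper simply asserts that a (summable) $\ast$ (null) convolution is null, whereas you spell out the standard split-at-$m=N-K$ argument; that extra detail does no harm and would be equally acceptable.
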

\begin{proof}
For every $t>0$ there exists $n(t)\in \mathbb{N}$ such that $n(t)h\leq t < (n(t)+1)h$. Then
\begin{align*}
 \lefteqn{\int_0^t e^{-2\lambda(t-s)}f(s)^2\,ds}\\
 &=
 \sum_{j=1}^{n(t)}
  \int_{(j-1)h}^{jh} e^{-2\lambda(t-s)}f^2(s)\,ds + \int_{n(t)h}^t e^{-2\lambda(t-s)}f^2(s)\,ds
  \\
  &\leq
  \sum_{j=1}^{n(t)} e^{-2\lambda h(n(t)-j)} \int_{(j-1)h}^{jh} f^2(s)\,ds + \int_{n(t)h}^{(n(t)+1)h} f^2(s)\,ds.
\end{align*}
Therefore, as the last term has zero limit because $\int_{nh}^{(n+1)h} f^2(s)\,ds\to 0$ as $n\to\infty$, we have
\[
\limsup_{t\to\infty}
 \int_0^t e^{-2\lambda (t-s)}\|\sigma(s)\|^2_F\,ds
 \leq
 \limsup_{n\to\infty} \sum_{j=1}^{n} e^{-2 \lambda h(n-j)} \int_{(j-1)h}^{jh} f^2(s)\,ds.
\]
We see that the righthand side is
the discrete convolution of a summable and a null sequence. Hence the limit is zero, and the claim holds.
\end{proof}

\subsection{Proof of Proposition~\ref{prop.meansquare}} 
It is easy to see that (A) implies (B), that (B) implies (C), and that (C) implies (A). Hence (A)--(C) are equivalent. 
We prove now (C) implies (D). 
Given that $X(0)=\xi$ is independent of $B$, It\^o's isometry yields
\[
\mathbb{E}[\|X(t)\|^2_2]=\mathbb{E}[\|\Psi(t)\xi\|^2_2]+ \int_0^t \|\Psi(t-s)\sigma(s)\|^2_F\,ds, \quad t\geq 0.
\]
Since all the eigenvalues of $A$ have negative real parts, it follows that there exists $\lambda>0$ 
and $K_2>0$ such that $\|\Psi(t)\|_2\leq K_2e^{-\lambda t}$ for all $t\geq 0$. Since there exists a $c_1>0$ such that 
$\|C\|_F\leq c_1\|C\|_2$ for all $C\in \mathbb{R}^{d\times d}$, we have 
\[
\|\Psi(t)e^{\lambda t}\|_F\leq c_1\|\Psi(t)e^{\lambda t}\|_2\leq c_1 K_2, \quad t\geq 0,
\]
or $\|\Psi(t)\|_F\leq c_1K_2 e^{-\lambda t}$ for all $t\geq 0$. Hence using the submultiplicative property of the Frobenius norm, we have 
\begin{align*}
\mathbb{E}[\|X(t)\|^2_2]
&\leq \|\Psi(t)\|^2_2\mathbb{E}[\|\xi\|^2_2]+ \int_0^t \|\Psi(t-s)\|^2_F\|\sigma(s)\|^2_F\,ds\\
&\leq  K_2^2 e^{-2\lambda t}\mathbb{E}[\|\xi\|^2_2] + c_1^2 K_2^2\int_0^t  e^{-2\lambda(t-s)}\|\sigma(s)\|^2_F\,ds.
\end{align*}
By Lemma~\ref{lemma.econvfto0}, the second term on the righthand side tends to zero as $t\to\infty$ when $\sigma$ obeys \eqref{eq.sighto0}, which proves that statement (A) implies statement (D). 

To prove that statement (D) implies statement (C), which will suffice to complete the proof, we start by writing  
\[
\int_t^{t+1} \sigma(s)\,dB(s)=X(t+1)-X(t)-\int_t^{t+1} AX(s)\,ds, \quad t\geq 0.
\] 
Considering the expectation of  $\|\cdot\|^2$ on both sides, and using It\^o's isometry on the left hand side, we deduce the identity
\[
\int_t^{t+1} \|\sigma(s)\|^2_F\,ds = \mathbb{E}\left[\left\|X(t+1)-X(t)-\int_t^{t+1} AX(s)\,ds\right\|^2_2  \right].
\]
Since (D) holds by hypothesis, the righthand side converges to zero as $t\to\infty$, completing the proof. 

\subsection{Proof of Part (C) of Theorem~\ref{theorem.liniffsigmafadeS}} 
In part (C), $\sigma$ is not in $L^2([0,\infty);\mathbb{R}^{d\times r})$.
In this case, there exists a pair of integers $(i,j)\in \{1,\ldots,d\}\times\{1,\ldots,r\}$
such that $\sigma_{ij}\not\in L^2([0,\infty);\mathbb{R})$.
Note that $Y_i$ obeys
\[
dY_i(t)=-Y_i(t)\,dt + \sum_{j=1}^r \sigma_{ij}(t)\,dB_j(t), \quad
t\geq 0.
\]
Thus there exists a standard Brownian motion $\bar{B}_i$ such that
\[
dY_i(t)=-Y_i(t)\,dt + \sqrt{\sum_{l=1}^r \sigma_{il}^2(t)}\,d\bar{B}_i(t), \quad t\geq 0.
\]
Define
\begin{equation} \label{def.sigmai}
\sigma_i^2(t)=\sum_{l=1}^r \sigma_{il}^2(t), \quad t\geq 0.
 \end{equation}
 Then $\sigma_i\not\in L^2(0,\infty)$, and it is possible to define a number $T_i>0$ such that $\int_0^t e^{2s}\sigma_i^2(s)\,ds>e^e$
 for $t>T_i$ and so one can define a function $\Sigma_i:[T_i,\infty)\to [0,\infty)$ by
\begin{equation} \label{def.Sigmai}
\Sigma_i(t)=\left( \int_0^t e^{-2(t-s)}\sigma_i^2(s)\,ds
\right)^{1/2} \left(\log\log \int_0^t e^{2s}\sigma_i^2(s)\,ds
\right)^{1/2}, \quad t\geq T_i.
\end{equation}
Notice also that for $t\geq T_i$ we have
\begin{align*}
\Sigma_i(t)^2&\leq  \int_0^t e^{-2(t-s)}\sigma_i^2(s)\,ds \cdot \log\log \int_0^t e^{2s}\|\sigma(s)\|^2_F\,ds\\
&\leq \int_0^t e^{-2(t-s)}\|\sigma(s)\|^2_F\,ds \cdot \log\log e^{2t}\int_0^t \|\sigma(s)\|^2_F\,ds.
\end{align*}
The significance of the function $\Sigma_i$ defined in
\eqref{def.Sigmai} is that it characterises the largest possible
fluctuations of $Y_i$ when $\sigma_i\not\in L^2(0,\infty)$. To do this we apply the Law of the
iterated logarithm for martingales to $M(t):=\int_0^t e^s
\sigma_i(s)\,d\bar{B}_i(s)$. This holds because $\sigma_i\not \in
L^2([0,\infty);\mathbb{R}^{d\times r})$ implies that $\langle
M\rangle (t)=\int_0^t e^{2s} \sigma_i^2(s)\,ds\to\infty$ as
$t\to\infty$. We get
\begin{equation} \label{eq.YiasySig} \limsup_{t\to\infty}
\frac{\|Y_i(t)\|^2}{\Sigma_i^2(t)}=2, \quad\text{a.s.}
\end{equation}
Let $N=\{i=1,\ldots,d: \sigma_i\not\in L^2(0,\infty)\}$, and $F=\{1,\ldots,d\}\setminus N$.
Clearly, if $i\in F$, we have that $Y_i(t)\to 0$ as $t\to\infty$ a.s., so
\[
\lim_{t\to\infty} \sum_{i\in F} \|Y_i(t)\|^2 =0, \quad\text{a.s.}
\]
By \eqref{eq.YiasySig}, for every $i\in N$, there exist $T_i'(\omega)>T_i$ such that
$\|Y_i(t,\omega)\|^2\leq 4 \Sigma_i^2(t)$ for $t\geq T_i'(\omega)$. Define $T^\ast(\omega)=\max_{i\in N} T_i(\omega)$. Then  for $t\geq T^\ast(\omega)$
we have
\[
\|Y_i(t,\omega)\|^2\leq 4 \Sigma_i^2(t) \leq 4 \int_0^t e^{-2(t-s)}\|\sigma(s)\|^2_F\,ds \cdot \log\log \left(e^{2t}\int_0^t \|\sigma(s)\|^2_F\,ds\right).
\]
Therefore for $t\geq T^\ast(\omega)$ we get
\[
\sum_{i\in N} \|Y_i(t,\omega)\|^2 \leq 4d \int_0^t e^{-2(t-s)}\|\sigma(s)\|^2_F\,ds \cdot \log\log \left(e^{2t}\int_0^t \|\sigma(s)\|^2_F\,ds\right).
\]
Hence there exists $T'(\omega)>0$ such that for all $ t\geq T'(\omega)$ we have
\[
\|Y(t,\omega)\|^2\leq 1+4d \int_0^t e^{-2(t-s)}\|\sigma(s)\|^2_F\,ds \cdot \log\log \left(e^{2t}\int_0^t \|\sigma(s)\|^2_F\,ds\right). 
\]
Now, because \eqref{eq.sighto0} holds, we have $\int_0^t \|\sigma(s)\|^2_F\,ds/t\to 0$ as $t\to\infty$. Therefore
\[
\limsup_{t\to\infty} \frac{1}{\log t} \log\log \left(e^{2t}\int_0^t \|\sigma(s)\|^2_F\,ds\right) \leq 1.
\]
Hence there is $T''(\omega)>0$ such that for all $t\geq T''(\omega)$ we have
\[
\|Y(t,\omega)\|^2\leq 1+8d \int_0^t e^{-2(t-s)}\|\sigma(s)\|^2_F\,ds \cdot \log t, \quad t\geq T''(\omega).
\]
Hence
\[ 
\limsup_{t\to\infty} \frac{\|Y(t)\|^2_2}{\log t}\leq 8d \limsup_{t\to\infty} \int_0^t e^{-2(t-s)}\|\sigma(s)\|^2_F\,ds, \quad\text{a.s.}
\]
By Lemma~\ref{lemma.econvfto0}, we have that 
\begin{equation*} 
\lim_{t\to\infty} \int_0^t e^{-2(t-s)}\|\sigma(s)\|^2_F\,ds=0,
\end{equation*}
which ensures that
\begin{equation} \label{eq.masterlimsupY2}
\lim_{t\to\infty} \frac{\|Y(t)\|^2_2}{\log t} = 0, \quad\text{a.s.}
\end{equation}

Using the proof of part (A) of Theorem~\ref{theorem.XYasy}, we have from \eqref{eq.zvarp} and \eqref{def.g}
that $z(t):=X(t)-Y(t)$ for $t\geq 0$ obeys
\begin{equation*} 
z(t)=\Psi(t)\xi+\int_0^t \Psi(t-s)(I_d+A)Y(s)\,ds, \quad
t\geq 0.
\end{equation*}
Since $\Psi$ obeys the estimate \eqref{eq.Psiexpest}, we have for $t\geq 0$
\begin{align*}
\|z(t)\|_2
&\leq \|\Psi(t)\|_2\|\xi\|_2+\int_0^t \|\Psi(t-s)\|_2\|I_d+A\|_2\|Y(s)\|_2\,ds\\
&\leq Ke^{-\lambda t}\|\xi\|_2+K\|I_d+A\|_2 \int_0^t e^{-\lambda (t-s)} \|Y(s)\|_2\,ds.
\end{align*}
Therefore we have
\[
\|X(t)\|_2\leq Ke^{-\lambda t}\|\xi\|_2+\|Y(t)\|_2+K\|I_d+A\|_2 \int_0^t e^{-\lambda (t-s)} \|Y(s)\|_2\,ds, \quad t\geq 0.
\]
Since $Y$ obeys \eqref{eq.masterlimsupY2}, it follows that
\begin{equation} \label{eq.Xlogt0}
\limsup_{t\to\infty} \frac{\|X(t)\|_2}{\sqrt{\log t}}=0, \quad\text{a.s.}
\end{equation}

Our strategy now is to return to the identity \eqref{eq.masterV1}, and estimate the asymptotic behaviour of each of the terms. We start with the term on the lefthand side. Since all the eigenvalues of $A$ have negative real parts, there exists a positive definite matrix $M$ which satisfies  \eqref{def.Mmatrix}. Then $V(x)=\langle x, Mx\rangle$ obeys
\[
\frac{V(x)}{\|x\|^2_2} = \langle \frac{x}{\|x\|_2}, M\frac{x}{\|x\|_2}\rangle \leq \sup_{\|u\|_2=1} \langle u,Mu\rangle=:\mu_1>0
\]
for $x\neq 0$. Hence $0\leq V(x)\leq \mu_1\|x\|^2_2$ for all $x\in\mathbb{R}^d$. Therefore by \eqref{eq.Xlogt0} we have
\begin{equation}\label{eq.VXlogt0}
\lim_{t\to\infty} \frac{V(X(t))}{\log t}=0, \quad\text{a.s.}
\end{equation}
The first term on the righthand side of \eqref{eq.masterV1} is constant. We wish to prove that the second term on the righthand side of \eqref{eq.masterV1} obeys
\eqref{eq.xNxdivtto0}, i.e.,
\[
\lim_{t\to\infty} \frac{1}{t}\int_0^t \|X(s)\|^2_2\,ds =0, \quad\text{a.s.}
\]
We note that this limit automatically implies that $\liminf_{t\to\infty} \|X(t,\xi)\|_2=0$ a.s.

The asymptotic behaviour of the third term on the righthand side of \eqref{eq.masterV1} is easily determined: since \eqref{eq.sighto0} holds, the limit \eqref{eq.Psigcesaro} follows. It remains to estimate the asymptotic behaviour of the fourth term on the righthand side of \eqref{eq.masterV1}, which is a local martingale with quadratic variation bounded by \eqref{eq.qvKest}, i.e.,
\begin{equation*}
\langle K \rangle(t)\leq  4\|M\|^2_2\int_0^t \|X(s)\|_2^2 \|\sigma(s)\|^2_F \,ds, \quad t\geq 0.
\end{equation*}
By \eqref{eq.Xlogt0}, it follows for every $\epsilon>0$ and $\omega$ in an a.s. event $\Omega^\ast$ that there is a $T_1(\epsilon,\omega)$ such that
\[
\|X(t,\omega)\|^2_2 < \epsilon\log t, \quad t\geq T_1(\epsilon,\omega).
\]
By \eqref{eq.sighto0}, we have that there exists $T_2(\epsilon)>0$ such that $\int_0^t \|\sigma(s)\|^2_F\,ds < \epsilon t$ for $t\geq T_2(\epsilon)$.
Define $T_3(\epsilon,\omega)=\max(T_1(\epsilon,\omega),T_2(\epsilon))$. Then for $t\geq T_3(\epsilon,\omega)$ we have
\begin{align*}
\langle K \rangle(t)&\leq  
D(\epsilon,\omega)+4\|M\|^2_2\int_{T_3(\epsilon,\omega)}^t  \|X(s,\omega)\|_2^2 \|\sigma(s)\|^2_F \,ds\\
&\leq  
D(\epsilon,\omega)+4\|M\|^2_2\epsilon \log t \int_{T_3(\epsilon,\omega)}^t  \|\sigma(s)\|^2_F \,ds\\
&\leq  
D(\epsilon,\omega)+4\|M\|^2_2\epsilon^2 t \log t,
\end{align*}
where we have defined
\[
D(\epsilon,\omega):=4\|M\|^2_2\int_0^{T_3(\epsilon,\omega)} \|X(s,\omega)\|_2^2 \|\sigma(s)\|^2_F \,ds.
\]
Hence we have that
\begin{equation} \label{eq.qvestKexp}
\lim_{t\to\infty} \frac{\langle K\rangle (t)}{t\log t}=0, \quad\text{a.s.}
\end{equation}
Let $A_1:=\{\omega: \lim_{t\to\infty} \langle K\rangle(t,\omega) \text{ is finite}\}$, and
$A_2:=\{\omega: \lim_{t\to\infty} \langle K\rangle(t,\omega) =+\infty\}$. Then $K$ converges a.s. on $A_1$ and we have
\[
\lim_{t\to\infty} \frac{1}{t}K(t)=0, \quad\text{a.s. on $A_1$.}
\]
On $A_2$, the Law of the iterated logarithm for martingales holds, namely
\[
\limsup_{t\to\infty} \frac{|K(t)|}{\sqrt{2\langle K\rangle(t)\log\log \langle K\rangle (t)}}=1, \quad \text{a.s. on $A_2$}.
\]
By \eqref{eq.qvestKexp} we have
\begin{equation}\label{eq.log2qvK}
\limsup_{t\to\infty} \frac{\log\log \langle K\rangle(t)}{\log_2 t}\leq 1, \quad \text{a.s. on $A_2$}.
\end{equation}
Therefore, we have
\[
\limsup_{t\to\infty} \frac{|K(t)|}{t}\leq  
\limsup_{t\to\infty} \sqrt{\frac{2\langle K\rangle(t)\log\log \langle K\rangle (t)}{t^2}}, \quad \text{a.s. on $A_2$}
\]
Now, we rewrite the quotient in the limit according to
\[
\frac{2\langle K\rangle(t)\log\log \langle K\rangle (t)}{t^2}
=2\frac{\langle K\rangle(t)}{t\log t}\cdot \frac{\log t \cdot \log\log t}{t} \cdot \frac{\log\log \langle K\rangle (t)}{\log\log t},
\]
and so from \eqref{eq.qvestKexp} and \eqref{eq.log2qvK}, we have that
\[
\lim_{t\to\infty} \frac{K(t)}{t}=0, \quad \text{a.s. on $A_2$}
\]
Since $A_1\cup A_2$ is an a.s. event, it follows that $K(t)/t\to 0$ as $t\to\infty$ a.s. Using this limit, \eqref{eq.Psigcesaro}, and \eqref{eq.VXlogt0}
in \eqref{eq.masterV1}, we arrive at the desired limit \eqref{eq.xNxdivtto0}.

\section{Proof of Theorem~\ref{theorem.stochfloq}}
Under \eqref{eq.floqmultlt1}, 
By \cite{Chicone:1999}[Theorem 2.48], we have that there exists a continuously differentiable function such that $P(t)\in \mathbb{C}^{d\times d}$, $P(t)$ is invertible and $P$ is $T$--periodic, and a matrix $L\in \mathbb{C}^{d\times d}$ all of whose eigenvalues have negative real parts such that 
\[
\Psi(t)=P(t)e^{Lt}.
\]   
Notice also that $P^{-1}$ is continuously differentiable and $T$--periodic. 
Since all the eigenvalues of $L$ have negative real parts, there exists a Hermitian and positive definite matrix $Q\in \mathbb{C}^{d\times d}$
such that 
\[
QL+L^\ast Q=-I_d.
\]
Also, as $P$ is periodic and continuous, and $P^{-1}$ is periodic and continuous, 
we have the estimate $\|P(t)\|\leq p_\ast$, $\|P(t)^{-1}\|\leq p_\ast$ for some $p_\ast>0$. Also, as 
all eigenvalues of $L$ have negative real parts, we have the estimate
\[
\|\Psi(t)\|\leq p_\ast e^{-\lambda t}, \quad \|e^{Lt}\|\leq ce^{-\lambda t}.
\]

Define $z(t)=X(t)-Y(t)$ for $t\geq 0$. Then with $g(t)=(I_d+A(t))Y(t)$, we have 
\[
z'(t)=A(t)Z(t)+g(t), \quad t\geq 0; \quad z(0)=\xi.
\]
Hence for $t\geq 0$ we have the variation of constants formula
\[
z(t)=\Psi(t)\xi+\int_{0}^t \Psi(t)\Psi(s)^{-1}g(s)\,ds
=\Psi(t)\xi+\int_{0}^t P(t)e^{L(t-s)}P(s)^{-1} g(s)\,ds.
\]
Therefore for $t\geq 0$ we have 
\[
\|z(t)\|\leq  p_\ast e^{-\lambda t}\|\xi\| +  p_\ast^2 c \max_{s\in [0,T]} \|I+A(s)\| \cdot \int_{0}^t e^{-\lambda(t-s)}  \|Y(s)\|\,ds.
\]
This leads to the estimate
\begin{equation} \label{eq.Xtestperaff}
\|X(t)\|\leq p_\ast e^{-\lambda t}\|\xi\| + \|Y(t)\| + p_\ast^2 c \max_{s\in [0,T]} \|I+A(s)\| \cdot \int_{0}^t e^{-\lambda(t-s)}  \|Y(s)\|\,ds.
\end{equation}
We see automatically that when $Y(t)\to 0$ as $t\to\infty$ a.s., then $X(t)\to 0$ as $t\to\infty$ a.s.; this proves part (A), because 
$S_h'(\epsilon)<+\infty$ implies $\lim_{t\to\infty} Y(t)=0$ a.s. 

In the case that $S_h'(\epsilon)<+\infty$ for all $\epsilon>\epsilon'$ and $S_h'(\epsilon)=+\infty$ 
for all $\epsilon<\epsilon'$, we have that $\limsup_{t\to\infty} \|Y(t)\|\leq c_2$ a.s. for some deterministic $c_2>0$. Therefore, from 
\eqref{eq.Xtestperaff}, we see that $\limsup_{t\to\infty} \|X(t)\|\leq c_4$ a.s., where $c_4$ is
\[
c_4=c_2+p_\ast^2 c \max_{s\in [0,T]} \|I+A(s)\| \frac{1}{\lambda} c_2,
\] 
which yields the desired upper bound in part (B).

To prove part (C), we start by noticing that $S_h'(\epsilon)=+\infty$ for all $\epsilon>0$ implies $\limsup_{t\to\infty} \|Y(t)\|=+\infty$ a.s.
Observing that the identity 
\[
Y(t)= 
X(t)-\xi e^{-t} - \int_{0}^t e^{-(t-s)}(I_d+A(s))X(s)\,ds, \quad t\geq 0,
\]
holds, we see that if there is an event of positive probability for which the limit superior $\limsup_{t\to\infty} \|X(t)\|$ is finite, then $\limsup_{t\to\infty} \|Y(t)\|<+\infty$
on this event, which results in a contradiction.  

The proof of the lower bound in part (B) is similar. Since $S_h'(\epsilon)<+\infty$ for all $\epsilon>\epsilon'$ and $S_h'(\epsilon)=+\infty$ for 
all $\epsilon<\epsilon'$, it follows that there exists a deterministic $c_1>0$ such that $\limsup_{t\to\infty} \|Y(t)\|\geq c_1$ a.s.
Suppose that there is an event of positive probability such that $\limsup_{t\to\infty}\|X(t)\|_2=:c(\omega)<c_1/(1+\max_{t\in [0,T]} \|I_d+A(t)\|_2)=:c_3$. 
Then 
\[
c_1\leq \limsup_{t\to\infty} \|Y(t)\|\leq c(\omega) \|X(t)\|+ \sup_{t\in [0,T]} \|I_d+A(t)\|_2 \cdot c(\omega), 
\]
so $c_1/(1+\max_{t\in [0,T]} \|I_d+A(t)\|_2)>c(\omega)\geq c_1/(1+\max_{t\in [0,T]} \|I_d+A(t)\|_2)$, a conrtadiction.
Therefore we have that $\limsup_{t\to\infty} \|X(t)\|\geq c_3$ a.s. 

We now prove the ergodic result in part (B), from which $\liminf_{t\to\infty} \|X(t)\|=0$ a.s. follows easily.
To do this, we define for $x\in\mathbb{R}^d$ the function $V(t,x)=x^T (P(t)^{-1})^\ast QP(t)^{-1}x$. Note that $V(\cdot,x)$ is $T$--periodic and real--valued, 
because $M(t):=(P(t)^{-1})^\ast QP(t)^{-1}$ is Hermitian. We may now write $V(t,x)=x^T M(t)x$. This function $V$ 
was used in Giesl and Hafstein~\cite[Theorem 6]{GieslHaf:2012} as a strict Lyapunov function in proving that the zero solution of 
the unperturbed differential equation $x'(t)=A(t)x(t)$ is asymptotically stable. 

We start by obtaining a $t$--uniform upper bound on $V$. Define $M_1(t)=M(t)+M(t)^t$. 
Suppressing $t$ dependence for a moment, we notice that $M_1=M+M^T$ is symmetric. Also, if we define the matrices $G,H\in \mathbb{R}^{d\times d}$ so that $M=G+iH$, then $M^T=G^T+iH^T$ and $M^\ast=G^T-iH^T$. Therefore as $M=M^\ast$, we have $G=G^T$ and $H=-H^T$. Hence $M_1=M+M^T=(G+G^T)+i(H+H^T)=2G$ is a real--valued symmetric matrix. 

For $x\neq 0$, we now have 
\begin{align*}
\frac{V(t,x)}{\|x\|^2_2}&=\frac{x^T M(t)x}{\|x\|^2}
\leq \sup_{\|u\|_2=1;u\in \mathbb{R}^d} u^T M(t)u\\
&=\sup_{\|u\|_2=1;u\in \mathbb{R}^d} \frac{1}{2} u^T M_1(t)u\leq \frac{1}{2}\|M_1(t)\|_2.
\end{align*}
Since $t\mapsto P(t)^{-1}$ is continuous and $T$--periodic, it follows that $t\mapsto M_1(t)$ is continous, real--valued and $T$--periodic. Therefore, 
there exists $c_6\in (0,\infty)$ defined by $c_6:=\max_{t\in [0,T]}\|M_1(t)\|_2/2$ such that 
\begin{equation} \label{eq.Vuplow}
V(t,x)\leq c_6\|x\|^2_2, \quad \text{ for all $(t,x)\in [0,\infty)\times \mathbb{R}^d$}.
\end{equation}
Next, we notice that 
\[
\dot{P}^{-1}(t) = LP^{-1}(t)-P^{-1}(t)A(t).
\]
Therefore 
\begin{align*}
M'(t)&=(\dot{P}(t)^{-1})^\ast QP(t)^{-1} + (P(t)^{-1})^\ast Q\dot{P}(t)^{-1}\\
&=(LP^{-1}(t)-P^{-1}(t)A(t))^\ast QP(t)^{-1} + (P(t)^{-1})^\ast Q ( LP^{-1}(t)-P^{-1}(t)A(t)).
\end{align*}
Hence
\begin{multline*}
M'(t)=
P^{-1}(t)^\ast L^\ast  QP(t)^{-1} -A(t)^\ast P^{-1}(t)^\ast  QP(t)^{-1} + (P(t)^{-1})^\ast Q LP^{-1}(t)
\\- (P(t)^{-1})^\ast QP^{-1}(t)A(t).
\end{multline*}
Using the fact that $QL+L^\ast Q=-I_d$, and the definition of $M(t)$ we get
\[ 
M'(t)=-P^{-1}(t)^\ast P(t)^{-1} -A(t)^\ast M(t) - M(t)A(t).
\]
Hence 
\[
\frac{\partial V}{\partial t}(t,x)=x^T M'(t)x=-x^T P^{-1}(t)^\ast P(t)^{-1}x -x^TA(t)^T M(t)x - x^TM(t)A(t)x.
\]
Next, we notice that
\[
\frac{\partial V}{\partial x_i}(t,x)= [(M(t)+M(t)^T)x]_i=\sum_{k=1}^d (M_{ik}(t)+M(t)^T_{ik})x_k.
\]
Therefore  we have
\[
\frac{\partial^2 V}{\partial x_i\partial x_j}(t,x)=[M_1(t)]_{ij}.
\]
Let $X_i(t)=\langle X(t),e_i\rangle$. Notice that the
cross--variation of $X_i$ and $X_j$ obeys
\[
d\langle X_i,X_j\rangle(t)=\sum_{k=1}^r
\sigma_{ik}(t)\sigma_{jk}(t)\,dt.
\]
Therefore, as $V$ is a $C^{1,2}$ function, by the multidimensional
version of It\^o's formula, we have
\begin{multline*}
dV(t,X(t))\\
=\Biggl(-X(t)^T P^{-1}(t)^\ast P(t)^{-1}X(t) -X(t)^TA(t)^T M(t)X(t) - X(t)^TM(t)A(t)X(t) \\ +  (M(t)+M(t)^T)X(t)^T A(t)X(t)
+ \frac{1}{2}\sum_{i=1}^d \sum_{j=1}^d  [M_1(t)]_{ij} \sum_{k=1}^r \sigma_{ik}(t)\sigma_{jk}(t)  \Biggr)\,dt \\
+  \langle (M(t)+M(t)^T)X(t),\sigma(t)\,dB(t)\rangle.
\end{multline*}
Since $M_1$ is a real--valued symmetric matrix, we may define the real--valued and deterministic function $J:[0,\infty)\to\mathbb{R}$ by 
\begin{equation} \label{def.Jaffineper}
J(t):=
\frac{1}{2}\sum_{i=1}^d \sum_{j=1}^d  (M_{ij}(t)+M_{ji}(t)) \sum_{k=1}^r \sigma_{ik}(t)\sigma_{jk}(t),
\end{equation}
and the real--valued continuous local martingale $K$ by
\begin{equation} \label{def.Kaffine}
K(t)=\int_0^t \langle M_1(s)X(s),\sigma(s)\,dB(s)\rangle=  \sum_{j=1}^r \int_0^t \left\{\sum_{i=1}^d [M_1(s)X(s)]_i \sigma(s)_{ij}\right\}\,dB_j(s),
\end{equation}
and observe that 
\begin{multline} \label{eq.Vtmaster}
V(t,X(t))
=V(0,\xi) - \int_0^t X(s)^T (P(s)^{-1})^\ast P(s)^{-1}X(s)\,ds \\
+ \int_0^t J(s)\,ds + K(t), \quad t\geq 0.
\end{multline}

We now attempt to estimate each of the terms in \eqref{eq.Vtmaster}. We start with $J(t)$, observing that it can be written as  
\begin{align*}
J(t)&=\frac{1}{2}\sum_{i=1}^d \sum_{j=1}^d  (M(t)^T+M(t))_{ji} (\sigma(t)\sigma(t)^T)_{ij} \\
&=\frac{1}{2}  \sum_{j=1}^d \sum_{i=1}^d (M(t)^T+M(t))_{ji} (\sigma(t)\sigma(t)^T)_{ij} \\
&=\frac{1}{2}  \text{tr}(M_1(t)\sigma(t)\sigma(t)^T).
\end{align*}
Since $t\mapsto P^{-1}(t)$ is $T$--periodic and continuous, it follows that $t\mapsto M_1(t)$ is continuous and $T$--periodic. Therefore, using the fact 
that the Frobenius norm is subadditive and submultiplicative, $\|D^T\|_F=\|D\|_F$ for every $d\times r$ matrix $D$, 
and that $\text{tr}(C)^2\leq d\|C\|^2_F$ for every $d\times d$ matrix $C$, we have that 
\begin{align*}
|J(t)|&= \frac{1}{2}  \left|\text{tr}(M_1(t)\sigma(t)\sigma(t)^T)\right| \leq \frac{1}{2}\sqrt{d}\|M_1(t)\sigma(t)\sigma(t)^T\|_F\\
&\leq \frac{1}{2}\sqrt{d}\|M_1(t)\|_F \|\sigma(t)\|_F \|\sigma(t)^T\|_F\\
&\leq  \frac{1}{2}\sqrt{d}\max_{t\in [0,T]}\|M_1(t)\|_F \cdot \|\sigma(t)\|^2_F.
\end{align*}
Now, as $S_h'(\epsilon)<+\infty$ for $\epsilon>\epsilon'$, it follows that $\int_{nh}^{(n+1)h} \|\sigma(s)\|^2_F\,ds\to 0$ as $n\to\infty$. Hence 
$\lim_{t\to\infty} t^{-1}\int_0^t \|\sigma(s)\|^2_F\,ds=0$. Therefore, it follows that 
\begin{equation} \label{eq.cesaroLto0}
\lim_{t\to\infty} \frac{1}{t}\int_0^t J(s)\,ds = 0.
\end{equation}
Next we deal with the local martingale $K$ defined in \eqref{def.Kaffine}. We start by observing that it has quadratic variation 
given by 
\begin{equation*} 
\langle K\rangle (t)
=\sum_{j=1}^r \int_0^t \left\{\sum_{i=1}^d [M_1(s)X(s)]_i \sigma(s)_{ij}\right\}^2\,ds.
\end{equation*}
Therefore applying the Cauchy--Schwartz inequality, we have
\begin{align*} 
\langle K\rangle (t)
&\leq 
\sum_{j=1}^r \int_0^t \sum_{i=1}^d [M_1(s)X(s)]_i^2 \sum_{i=1}^d \sigma(s)_{ij}^2\,ds\\
&=
\int_0^t \|M_1(s)X(s)\|_2^2 \|\sigma(s)\|^2_F\,ds\\
&\leq 
\int_0^t \|M_1(s)\|^2_2\|X(s)\|_2^2 \|\sigma(s)\|^2_F\,ds.
\end{align*}
Now, as $M_1$ is continuous and $T$--periodic, it follows that 
\begin{equation} \label{eq.Kqvaffper}
\langle K\rangle(t)\leq \max_{t\in [0,T]} \|M_1(s)\|^2_2 \sup_{0\leq s\leq t} \|X(s)\|^2_2  \cdot \int_0^t  \|\sigma(s)\|^2_F\,ds, \quad t\geq 0.
\end{equation}
Therefore, as $t\mapsto \|X(t)\|$ is a.s. bounded, and $\int_0^t \|\sigma(s)\|_F^2\,ds/t\to 0$ as $t\to\infty$, we have 
\[
\lim_{t\to\infty}  \frac{\langle K\rangle(t)}{t}=0,\quad \text{a.s.}
\]
In the case that $\langle K\rangle(t)$ tends to a finite limit as $t\to\infty$, we have that $K(t)$ tends to a finite limit, and therefore 
that $\lim_{t\to\infty} K(t)/t=0$. If on the other hand $\langle K\rangle(t)\to \infty$ as $t\to\infty$, by the strong law of large numbers for martingales 
we have that $\lim_{t\to\infty} K(t)/\langle K\rangle(t)=0$. Therefore, in this case it follows that 
\[
\limsup_{t\to\infty} \frac{|K(t)|}{t}= \limsup_{t\to\infty}\frac{|K(t)|}{\langle K\rangle(t)}\cdot \frac{\langle K\rangle(t)}{t}=0.
\]
Therefore we have that 
\begin{equation} \label{eq.Ktto0affper}
\lim_{t\to\infty} \frac{1}{t}K(t)=0, \quad\text{a.s.}
\end{equation}
By \eqref{eq.Vuplow} and the fact that $X$ is bounded a.s. we have that 
\begin{equation} \label{eq.Vtt0affper}
\lim_{t\to\infty} \frac{1}{t}V(t,X(t))=0, \quad\text{a.s.}
\end{equation}
Therefore, inserting the estimates \eqref{eq.Vtt0affper}, \eqref{eq.Ktto0affper} and \eqref{eq.cesaroLto0} into \eqref{eq.Vtmaster}, we get 
\begin{equation} \label{eq.cesaroPx0}
\lim_{t\to\infty} \frac{1}{t}\int_0^t X(s)^T (P(s)^{-1})^\ast P(s)^{-1}X(s)\,ds =0, \quad\text{a.s.}
\end{equation}
For any $F\in \mathbb{C}^{d\times d}$, we have that $D=F^\ast F$ is Hermitian. Moreover, because $z^\ast D z=(Fz)^\ast Fz\geq 0$ for all $z\in \mathbb{C}^d$, it follows not only that $x^TDx$ is real--valued for every $x\in\mathbb{R}^d$, but also that $x^TDx\geq 0$ for all $x\in \mathbb{R}^d$ with equality only if $Fx=0$. 
Specialising to the case that $F=P(t)^{-1}$, we see that we have $x^T(P(t)^{-1})^\ast P(t)^{-1}x>0$ for all $x\neq 0$.
In fact, we have that 
\begin{align*}
\frac{x^T(P(t)^{-1})^\ast P(t)^{-1}x}{\|x\|^2_2}
&\geq \inf_{\|u\|_2=1;u\in\mathbb{R}^d} u^T(P(t)^{-1})^\ast P(t)^{-1}u\\
&\geq \inf_{\|u\|_2=1;u\in\mathbb{C}^d} (P(t)^{-1} u)^\ast P(t)^{-1}u=:\lambda(t)>0.
\end{align*}
Clearly, $\lambda$ is $T$--periodic and $\lambda(t)$ is the minimal eigenvalue of $(P(t)^{-1})^\ast P(t)^{-1}$. Since the matrix--valued function $(P(t)^{-1})^\ast P(t)^{-1}$ is continuous, $t\mapsto \lambda(t)$ is continuous and attains its bounds on the compact interval $[0,T]$. Therefore  
for all $x\in\mathbb{R}^d$ and $t\geq 0$, we have that there exists $c_7>0$ such that
\begin{equation} \label{eq.xPPxlowerbdd}
x^T (P(t)^{-1})^\ast P(t)^{-1}x \geq \min_{s\in [0,T]} \lambda(s) \cdot \|x\|^2_2=: c_7\|x\|_2^2.
\end{equation}
Therefore, applying this estimate in \eqref{eq.cesaroPx0}, we obtain 
\begin{equation}\label{eq.X2aveperaff}
\lim_{t\to\infty}\frac{1}{t}\int_0^t \|X(s)\|^2_2\,ds=0, \quad \text{a.s.} 
\end{equation}
from which we readily deduce $\liminf_{t\to\infty} \|X(t)\|_2=0$ a.s. 

\section{Proof of Theorem~\ref{theorem.stochfloqsmallnoise}}
In the case when $\sigma$ obeys \eqref{eq.sighto0}, we have already shown that $Y$ obeys \eqref{eq.masterlimsupY2}. 
Now, from \eqref{eq.Xtestperaff}, it follows that $X$ obeys the limit \eqref{eq.Xlogt0}. 
Due to \eqref{eq.Vuplow} and \eqref{eq.Xlogt0}, we have that \eqref{eq.Vtt0affper} holds. By \eqref{eq.sighto0}, $J$ defined by \eqref{def.Jaffineper}  obeys \eqref{eq.cesaroLto0}. Next, the local martingale $K$ defined by \eqref{def.Kaffine} has quadratic variation bounded by \eqref{eq.Kqvaffper}. 
Therefore from  \eqref{eq.Kqvaffper} we have  
\begin{equation} \label{eq.Kqvaffpertlogt}
\frac{\langle K\rangle(t)}{t \log t}\leq \max_{t\in [0,T]} \|M_1(s)\|^2_2 \frac{\sup_{0\leq s\leq t} \|X(s)\|^2_2}{\log t}  \cdot \frac{1}{t}\int_0^t  \|\sigma(s)\|^2_F\,ds, \quad t\geq 1,
\end{equation}
Since $\sigma$ obeys \eqref{eq.sighto0}, we have that $\int_0^t \|\sigma(s)\|^2_F\,ds/t\to 0$ as $t\to\infty$. Combining this estimate with 
\eqref{eq.Xlogt0} and \eqref{eq.Kqvaffpertlogt} we arrive at 
\[
\lim_{t\to\infty} \frac{\langle K\rangle(t)}{t\log t}=0, \quad \text{a.s.}
\]
Moreover, this implies 
\[
\limsup_{t\to\infty} \frac{\log\log \langle K\rangle(t)}{\log\log t}\leq 1, \quad\text{a.s.}
\] 
On the event on which $\langle K\rangle(t)$ tends to a finite limit as $t\to\infty$, it follows that $K$ tends to a finite limit a.s., and so we have 
that $K(t)/t\to 0$ as $t\to\infty$ a.s. on this event. On the other hand, consider the event on which $\langle K\rangle(t)\to\infty$ as 
$t\to\infty$. Then by the law of the iterated logarithm for martingales we have 
\begin{multline*}
\limsup_{t\to\infty} \frac{|K(t)|}{t}
\\=
\limsup_{t\to\infty} \frac{|K(t)|}{\sqrt{2\langle K\rangle(t)\log\log \langle K\rangle(t)}} 
\cdot \sqrt{\frac{2\langle K\rangle(t)}{t\log t}\frac{\log\log \langle K\rangle(t)}{\log\log t}  \cdot \frac{\log\log t\cdot \log t}{t}}
=0
\end{multline*}
a.s. on the event for which $\langle K\rangle(t)\to\infty$ as $t\to\infty$. Hence it follows that $K(t)/t\to 0$ as $t\to\infty$ a.s.

The representation \eqref{eq.Vtmaster} for $V(t,X(t))$ remains valid. Using the estimates \eqref{eq.Vtt0affper}, \eqref{eq.cesaroLto0}, and the fact that 
$K(t)/t\to 0$ as $t\to\infty$ a.s., we have that \eqref{eq.cesaroPx0} is true. Since the estimate \eqref{eq.xPPxlowerbdd} is still valid, this together 
with \eqref{eq.cesaroPx0} implies \eqref{eq.X2aveperaff}, as required. The conclusion that $\liminf_{t\to\infty} \|X(t)\|=0$ a.s. follows as before, completing 
the proof.

\end{document}